\documentclass[11pt]{article}
\pagestyle{empty}
\textheight = 9.3in \textwidth = 6.7in \headsep
= 0.0in \headheight = 0.0in \topmargin = 0.2in
\oddsidemargin=0.1in \evensidemargin=0.1in
\def\Dj{\hbox{D\kern-.73em\raise.30ex\hbox{-}
\raise-.30ex\hbox{}}}
\def\dj{\hbox{d\kern-.33em\raise.80ex\hbox{-}
\raise-.80ex\hbox{\kern-.40em}}}

\usepackage{amssymb}
\usepackage{color}
\usepackage{amssymb}
\usepackage{float}
\usepackage{amsmath}
\usepackage{caption}
\usepackage{graphicx}
\usepackage{latexsym}
\usepackage{amsthm}
\usepackage{tikz}
\usepackage{lineno}
\usepackage{enumitem}
\usepackage{epsfig}
\usepackage{graphicx}
\usepackage{amsmath,amsthm,amsfonts,amssymb,amscd,cite}
\allowdisplaybreaks
\usepackage{mathrsfs}

\newtheorem{theorem}{Theorem}[section]
\newtheorem{lemma}{Lemma}[section]

\newtheorem{corollary}{Corollary}[section]
\newtheorem{proposition}{Proposition}[section]
\newtheorem{conjecture}[theorem]{Conjecture}

\newtheorem{ques}[theorem]{Question}

\newtheorem{remark}[theorem]{Remark}

\begin{document}

%

%
%

\title{Maximal and maximum   induced matchings in  connected graphs}
\author{Bo-Jun Yuan\thanks{ Supported by National Natural Science Foundation of
China(12201559).}, Zhao-Yu Yang, Lu Zheng and Shi-Cai Gong\thanks{Corresponding author.}
\thanks{ Supported by National Natural Science Foundation of
China(12271484).}~\thanks{ E-mail addresses: ybjmath@163.com(B.J. Yuan), yangzhaoyu17@126.com(Z.Y. Yang), zhenglu@zust.edu.cn(L. Zheng), and
scgong@zafu.edu.cn(S.C. Gong).}
\\{\small \it  School of Science, Zhejiang University of Science and Technology, }\\{\small \it
Hangzhou, 310023, P. R. China}
   }
\date{}
\maketitle

\baselineskip=0.20in

\noindent {\bf Abstract.}
An induced matching  in a graph is a set of edges whose
endpoints induce a $1$-regular subgraph.
Gupta et al. (2012,\cite{Gupta})
showed that every $n$-vertex graph has at most $10^{\frac{n}{5}}\approx 1.5849^n$ maximal induced matchings, which is  attained by the disjoint union of copies of the complete graph $K_5$.

In this paper, we establish the maximum number of maximal and maximum induced matchings in a connected graph of order  $n$ as follows:
\begin{align*}
\begin{cases}
{n\choose 2} &~ {\rm if}~ 1\leq n\le 8; \\
{{\lfloor \frac{n}{2} \rfloor}\choose 2}\cdot {{\lceil \frac{n}{2} \rceil}\choose 2} -(\lfloor \frac{n}{2} \rfloor-1)\cdot (\lceil \frac{n}{2} \rceil-1)+1 &~ {\rm if}~ 9\leq n\le 13; \\
10^{\frac{n-1}{5}}+\frac{n+144}{30}\cdot 6^{\frac{n-6}{5}} &~ {\rm if}~ 14\leq n\le 30;\\
10^{\frac{n-1}{5}}+\frac{n-1}{5}\cdot 6^{\frac{n-6}{5}} & ~ {\rm if}~ n\geq 31, \\
\end{cases}
\end{align*}
and we also demonstrate that this bound is tight.
This result implies that we can enumerate all  maximal induced matchings of an $n$-vertex connected graph  in time $O(1.5849^n)$.
Furthermore, our findings provide an estimate for the number of maximal dissociation sets in an
$n$-vertex  connected graph.
\vspace{3mm}

\noindent {\bf Keywords}:  matching; maximal induced matching; maximum induced matching; block; upper bound.

 \smallskip
\noindent {\bf AMS subject classification 2010}: 05C31, 05C35, 15A18

\baselineskip=0.20in

\section{Introduction}
All graphs considered in this paper are   finite, simple and undirected.
We follow the terminology
in \cite{Bondy}. Let $G=(V,E)$ be a simple graph.
A subset of vertices in $G$ is called an {\it independent set} ({\it dissociation set})
if it induces a subgraph with vertex degree $0$ (at most $1$).
An independent set (dissociation set) in $G$ is {\it maximal} if it is not a proper subset of any
other independent sets  (dissociation sets), or {\it maximum} if it has maximum cardinality.
A matching $M\subseteq E$ is
called {\it induced} if the endpoints of $M$ induce a $1$-regular subgraph in $G$.
Similarly, an induced matching is called {\it maximal} or {\it maximum} if it is not a proper subset of any other induced matchings or it has
maximum cardinality, respectively.
Clearly, dissociation set  encompasses the extremes of vertices with degree $0$ and degree $1$, serving as natural generalizations of both independent sets and induced matchings.

Let ${\mathcal G}_n$ be the set of all graphs with $n$ vertices. For a given graph $G$, denote by
$|MD_G|$ the number of maximal dissociation sets and by $|M_G|$ the number of maximal induced matchings within
$G$. For any two graphs $G$ and $H$, the notation $G\cup H$ signifies the disjoint union of $G$ and $H$,
and $sG$ denotes the disjoint union of $s$ copies of $G$.

Problems concerning the bounding of discrete structures that satisfy specific properties have garnered significant interest in combinatorics and graph theory.
In the 1960s, Erd\H{o}s and Moser posed the problem of determining the maximum number of maximal independent sets in general graphs with a fixed order, as well as identifying the graphs that achieve this maximum value.
Soon after, Erd\H{o}s, and   Moon and  Moser \cite{Moon} independently resolved both questions, demonstrating that the extremal graph is the disjoint union of $K_3$.
Wilf \cite{Wilf} was the first to consider these questions for connected graphs.
Subsequently, Griggs, Grinstead and Guichard \cite{Griggs}  provided a complete solution to the question.
Since then, the topic has been extensively studied across various classes of graphs, including trees, forests, (connected) graphs with at most one cycle, bipartite graphs, connected graphs, $k$-connected graphs and triangle-free graphs. For a comprehensive survey, see \cite{Jou}.

The concept of dissociation sets was introduced by Yannakakis \cite{Yannakakis} in 1978.
In 2011, Kardo\v{s} et al. established that the problem of finding a maximum dissociation set in a given graph is equivalent to the minimum $3$-path vertex cover problem, see  \cite{Bre, Kar} for details.
Recently, there has been extensive research on determining the maximum and minimum numbers of maximal dissociation sets within various families of graphs, along with identifying the corresponding extremal graphs \cite{Sun, Tu, Tu1, Zhang}. Notably, Tu et al. \cite[Theorem 5.1]{Tu} provided an upper bound for the number of maximal dissociation sets and characterized the associated extremal graphs; see Proposition  \ref{01}.

Induced matchings, also referred to as $2$-separated matchings, see Stockmeyer and Vazirani \cite{Stockmeyer}, and strong matchings, see Fricke and Laskar \cite{Fricke}.
In 1982, Stockmeyer and Vazirani \cite{Stockmeyer} first  studied the maximum induced
matching problem, (i.e.,  find a maximum induced matching, abbreviated as MIM)  as a generalization of the traditional graph matching problem.
This research has attracted considerable attention due to its expanding range of applications, as evidenced by various studies \cite{Cameron,Stockmeyer, Golumbic, Golumbic1,4,12,13}.
Recently, Xiao and  Tan \cite{Xiao} developed exact algorithms for solving the MIM problem.
Relatively speaking, there has been relatively little research on  maximizing $|M_G|$ over different families of graphs.
A celebrated result by Gupta, Raman, and Saurabh \cite{Gupta} stated that for a fixed $r$, there exists a constant $c<2$ such that every graph of order  $n$ has at most $c^{n}$
maximal $r$-regular induced subgraphs. Especially, for $r=1$, they showed that
\begin{proposition}\label{Gupta0}{\em  \cite[Theorem 4]{Gupta}}
Let $G$ be a (not necessarily connected) graph of order $n$. Then  $|M_G|\leq 10^{\frac{n}{5}}$
with equality iff $n\equiv 0~ (\bmod~5)$ and $G= \frac{n}{5}K_5$.
\end{proposition}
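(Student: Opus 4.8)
The plan is to prove the bound $|M_G|\le 10^{n/5}$ by strong induction on $n$ via a branching argument, and then to recover the extremal characterisation by tracking where the recursion is tight. Two preliminary reductions come first. Since $|M_{G_1\cup G_2}|=|M_{G_1}|\cdot|M_{G_2}|$, the bound multiplies over connected components, so it suffices to treat connected $G$ and to show that a connected graph attains $10^{n/5}$ only if it is $K_5$. Next, if $G$ has no induced matching of size $\ge 2$, then every maximal induced matching is a single edge, whence $|M_G|\le|E(G)|\le\binom n2\le 10^{n/5}$, with equality forcing $n=5$ and $G=K_5$; the handful of remaining small orders are checked directly. So from now on I would assume $G$ is connected with $n\ge 2$ (hence minimum degree $d\ge 1$) and has an induced matching of size $\ge 2$.

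Let $v$ be a vertex of minimum degree $d$. Every maximal induced matching $M$ of $G$ either misses $v$ or contains a unique edge $vu$ with $u\in N(v)$. The heart of the argument is a bijection: for each $u\in N(v)$, the map $M\mapsto M\setminus\{vu\}$ carries the maximal induced matchings of $G$ that contain $vu$ onto the maximal induced matchings of $G_{u,v}:=G-(N[u]\cup N[v])$. Indeed, no edge of $M\setminus\{vu\}$ can meet $N[u]\cup N[v]$ --- an endpoint adjacent to $u$ or $v$ would create a vertex of degree $\ge 2$ in the $1$-regular graph induced by $V(M)$ --- so $M\setminus\{vu\}$ is an induced matching of $G_{u,v}$, maximal there since any edge augmenting it inside $G_{u,v}$ would also augment $M$ in $G$; conversely, adjoining $vu$ to a maximal induced matching of $G_{u,v}$ yields a maximal induced matching of $G$, because any edge augmenting it in $G$ must avoid $N[u]\cup N[v]$ and hence already augments inside $G_{u,v}$. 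Likewise, a maximal induced matching of $G$ that misses $v$ is a maximal induced matching of $G-v$. Since $N[v]\subseteq N[u]\cup N[v]$ we get $|V(G_{u,v})|\le n-d-1$, so the inductive hypothesis gives
\[
|M_G|\ \le\ |M_{G-v}|+\sum_{u\in N(v)}|M_{G_{u,v}}|\ \le\ 10^{(n-1)/5}+d\cdot 10^{(n-d-1)/5}\ =\ 10^{(n-1)/5}\bigl(1+d\cdot 10^{-d/5}\bigr).
\]
A direct check shows $1+d\cdot 10^{-d/5}\le 10^{1/5}$ precisely when $d\ge 5$, so the induction closes whenever $\delta(G)\ge 5$.

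What remains --- and this is the main obstacle --- is the low-degree regime $d\in\{1,2,3,4\}$, where the crude estimate $|\{M:v\notin V(M)\}|\le|M_{G-v}|$ discards too much. The fix is to exploit that when $v$ is unsaturated, maximality forces $V(M)$ to meet the closed second neighbourhood of $v$ in a restricted way: either some neighbour of $v$ is saturated, or every neighbour of $v$ has a saturated neighbour. One then branches one level deeper --- on which vertex at distance $\le 2$ from $v$ is saturated and by which edge --- arranging that each sufficiently deep branch removes enough vertices that the product of the resulting factors stays at most $10^{1/5}$ per five vertices; the natural bookkeeping device here is a measure-and-conquer weighting that assigns low-degree vertices smaller weight, and the analysis splits into subcases according to $d$ and to the adjacency pattern inside $N(v)\cup N(N(v))$, together with a short finite list of small graphs at which the recursion bottoms out. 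Finally, the equality statement follows by forcing every inequality above to be an equality simultaneously: tightness at each branching step pins the local structure down, forcing each component to be $K_5$, so the only extremal graphs are $\tfrac n5 K_5$, which requires $5\mid n$.
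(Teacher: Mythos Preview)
The paper does not supply its own proof of this proposition: it is quoted as Theorem~4 of Gupta--Raman--Saurabh and used as a black box throughout (see the citation attached to the statement and its reappearance before Eq.~(2.1)). So there is no in-paper argument to compare against; your proposal is being measured against the original source, not against anything in this manuscript.

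Your outline has the right shape --- branch on a minimum-degree vertex $v$, use the bijection between maximal induced matchings containing $vu$ and maximal induced matchings of $G-(N[u]\cup N[v])$, and bound the $v$-avoiding matchings by $|M_{G-v}|$ --- and the recursion does close cleanly once $\delta(G)\ge 5$. But the low-degree regime $d\in\{1,2,3,4\}$ is not a minor mop-up: it is essentially the entire content of the theorem, and you have not carried it out. Writing ``branch one level deeper'' and invoking measure-and-conquer by name is a description of a strategy, not an execution of one. Already for $d=4$ the deficit $1+4\cdot 10^{-4/5}\approx 1.634$ versus $10^{1/5}\approx 1.585$ is real, and closing it requires a genuine case analysis on the adjacency pattern inside $N(v)\cup N(N(v))$, with several subcases each needing its own branching rule and its own numerical verification; the cases $d\le 3$ are worse still. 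None of that analysis appears in your write-up, nor does the promised finite list of base cases. The equality characterisation likewise cannot be read off until those branches are actually written out, since one must check that every branch is strictly slack except along the $K_5$ path. As it stands, what you have proved is the bound for graphs of minimum degree at least $5$, together with a plan for the rest.
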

\noindent Subsequently, Manu Basavaraju,  Pinar Heggernes, et al. \cite{Basavaraju} put this counting problem within the framework of triangle-free graphs.

Motivated by Propositions  \ref{Gupta0}, \ref{01} and the corresponding research on maximal independent sets, the following question is natural:
\begin{ques} \label{02} Let $n$ be an arbitrary integer.  Which graph attains  the maximum
 number of maximal $(maximum)$ dissociation sets $($induced matchings$)$ among all connected graphs in ${\mathcal G}_n?$
\end{ques}

Let $G$ denote a disconnected graph whose all components are complete graphs; and
for a complete graph  $K_r$ disjoint from $G$, let $K_r\ast G$ be a connected graph obtained by picking a vertex $u\in V(K_r)$ and connecting it to exactly one vertex of each component in $G$, see Figure \ref{fig1} for an example.
In this paper, we consider Question \ref{02} on maximal and maximum induced matchings, showing that
\begin{theorem} \label{main1}
Let $G$ be a connected graph of order $n$. Then $|M_G|\leq f(n)$
with equality if $G=F_n$, where
 \begin{align*}
f(n):=
\begin{cases}
{n\choose 2} &~ {\rm if}~ 1\leq n\le 8; \\
{{\lfloor \frac{n}{2} \rfloor}\choose 2}\cdot {{\lceil \frac{n}{2} \rceil}\choose 2} -(\lfloor \frac{n}{2} \rfloor-1)\cdot (\lceil \frac{n}{2} \rceil-1)+1 &~ {\rm if}~ 9\leq n\le 13; \\
10^{\frac{n-1}{5}}+\frac{n+144}{30}\cdot 6^{\frac{n-6}{5}} &~ {\rm if}~ 14\leq n\le 30;\\
10^{\frac{n-1}{5}}+\frac{n-1}{5}\cdot 6^{\frac{n-6}{5}} & ~ {\rm if}~ n\geq 31, \\
\end{cases}
\end{align*}
and
\begin{align*}
F_n:=
\begin{cases}
K_n &~ {\rm if}~ 1\leq n\le 7; \\
K_8~{\rm or}~ K_4\ast K_4 &~ {\rm if}~ n=8; \\
K_{\lfloor \frac{n}{2}\rfloor}\ast K_{\lceil \frac{n}{2} \rceil} &~ {\rm if}~ 9\leq n\le 13; \\
K_6\ast \frac{n-6}{5}K_5 &~ {\rm if}~n \equiv 1 ~({\rm mod}~5)~ {\rm and}~ 14\leq n\le 30;\\
K_1\ast \frac{n-1}{5}K_5 &~  {\rm if}~n \equiv 1 ~({\rm mod}~5)~ {\rm and}~ n\geq 31. \\
\end{cases}
\end{align*}
  \end{theorem}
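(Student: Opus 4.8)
The plan is to prove Theorem~\ref{main1} by induction on the number of blocks of $G$, using the block-tree structure to peel off an ``end block'' and recurse. For the base case, I would first settle the situation where $G$ is $2$-connected (a single block). Here the key observation is that an induced matching of size $k$ in a $2$-connected graph on $n$ vertices already forces $n\ge 2k$ and, more importantly, that for $k\ge 3$ the presence of many vertex-disjoint induced edges is severely constrained by $2$-connectivity; I expect to show that a $2$-connected graph has at most $\binom{n}{2}$ maximal induced matchings, with the extreme cases being small complete or near-complete graphs. This matches the first line of $f(n)$ and handles $n\le 8$; for $9\le n\le 13$ the extremal graph $K_{\lfloor n/2\rfloor}\ast K_{\lceil n/2\rceil}$ already has two blocks, so this range is really part of the inductive regime and the piecewise formula just reflects which configuration wins.

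For the inductive step, let $B$ be an end block of $G$ attached at a cut vertex $v$, with $|V(B)|=b$, and let $G' = G - (V(B)\setminus\{v\})$, so $|V(G')| = n-b+1$. The central dichotomy is whether a maximal induced matching $M$ of $G$ uses an edge incident to $v$ or not. If no edge of $M$ touches $v$, then $M$ restricted to $B$ is a (possibly empty) induced matching of $B-v$ that is maximal ``within $B$'', and $M$ restricted to $G'$ is constrained to be maximal there; conversely if some edge $vw$ with $w\in V(B)$ lies in $M$, then $v$ and $w$ are blocked for $G'$ and $M$ decomposes across the two sides more rigidly. I would set up a recursion of the shape
\begin{align*}
|M_G| \le \alpha(B)\cdot |M_{G'}| + \beta(B)\cdot (\text{bounded count on } G'\setminus v),
\end{align*}
where $\alpha(B),\beta(B)$ count the relevant induced-matching patterns inside $B$ and $B-v$. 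The point is that $\alpha(B)$ and $\beta(B)$ grow at most like $10^{(b-1)/5}$ and $6^{(b-1)/5}$ respectively (with the $6$ coming from $K_6$-type blocks, explaining the $6^{(n-6)/5}$ term), so that plugging the inductive bound $|M_{G'}|\le f(n-b+1)$ and optimizing over the block size $b$ and block structure reproduces $f(n)$. The appearance of $K_5$'s in $F_n$ via the Gupta bound (Proposition~\ref{Gupta0}) and of a single distinguished block $K_6$ or $K_1$ as the ``hub'' in $K_r\ast \frac{n-r}{5}K_5$ is exactly what this optimization should select.

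The main obstacle will be controlling the ``mixed'' term precisely enough: when an edge of $M$ crosses into the hub, the count on the remaining components is no longer a clean maximal-induced-matching count but a count with a few vertices forbidden, and keeping this from accumulating error across many blocks requires a carefully chosen induction hypothesis — most likely a two-parameter statement bounding, for a rooted graph, both the number of maximal induced matchings and the number of those avoiding the root, simultaneously. A second delicate point is the transition thresholds $n=8$, $n=13$, $n=30$: near these values several block configurations give nearly equal counts, so I would verify the crossover inequalities (e.g. $\binom{\lfloor n/2\rfloor}{2}\binom{\lceil n/2\rceil}{2} - \cdots + 1$ versus $10^{(n-1)/5}+\cdots$) by direct computation and confirm that $\frac{n+144}{30}\ge \frac{n-1}{5}$ fails exactly past $n=30$, which is what forces the switch from the $K_6$-hub to the $K_1$-hub. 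Establishing tightness is comparatively routine: for each range one exhibits $F_n$ and counts its maximal induced matchings directly, using that in $K_r\ast sK_5$ a maximal induced matching either lives entirely in the $sK_5$ part (giving the $10^{s}$-type count) or uses one edge at the hub (giving the lower-order $6^{s}$-type correction).
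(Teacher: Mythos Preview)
Your base case is where the plan breaks. You assert that a $2$-connected graph on $n$ vertices has at most $\binom{n}{2}$ maximal induced matchings, but this is false once $n$ is moderately large. Take $t$ copies of $K_5$ on vertex sets $V_1,\dots,V_t$, designate two ``connector'' vertices $a_i,b_i$ in each $V_i$, and add the edges $a_ia_{i+1}$ and $b_ib_{i+1}$ (indices mod $t$). The resulting graph is $2$-connected on $n=5t$ vertices, and choosing in each $V_i$ one of the three edges that avoid both connectors already yields $3^{t}=3^{n/5}$ maximal induced matchings, which exceeds $\binom{n}{2}$ from about $n=35$ on. So $\binom{n}{2}$ cannot anchor a block-induction. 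You might retreat to the weaker base case $|M_G|\le f(n)$ for $2$-connected $G$; that is presumably true, but nothing in your outline addresses it, and it is not visibly easier than the full theorem.

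The paper does not try to handle $2$-connected graphs directly either; its route is quite different. It fixes a \emph{minimal counterexample} $G$ (least order, then least number of twin classes $\tau(G)$) and proves successively that $G$ has no vertex of degree $1$ or $n-1$, that every vertex of degree $\ge 6$ is a cutpoint, that $G$ has no pendant block $K_3,K_4,K_5,K_6$, and finally that $G$ has no non-cutpoint at all---an impossibility. The engine is a ``twinning'' move $G\mapsto G_{v\to u}$ (Lemma~\ref{y1}) which, under a mild hypothesis, replaces $v$ by a twin of $u$ without decreasing $|M_G|$ while decreasing $\tau$; combined with the deletion bounds of Proposition~\ref{bopro1} and the exact recursion of Corollary~\ref{bocor2}, this lets one force local structure. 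Your inductive step is closest in spirit to Corollary~\ref{bocor2}, but that identity is only available when the end block is a \emph{complete} graph $K_r$; for a general block $B$ there is no clean product formula for $|M_G|$ in terms of $|M_{G'}|$, and it is precisely the twinning lemma (together with Lemmas~\ref{gcut5}--\ref{gcut6}) that lets the paper reduce to the complete-block case. That reduction, not the block recursion itself, is the idea your proposal is missing.
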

Denote $|MI_G|$  by  the number of  maximum induced matchings in a graph $G$.
Since a  maximum induced matching is   a maximal induced matching, $|MI_G|\leq |M_G|$. Note that the extremal  graph $F_n$ in Theorem  \ref{main1} satisfies that  each maximal induced matching is also  a  maximum induced matching, where  $n\leq 8$ and  $n\geq 31$. This implies the following result.
 \begin{theorem}
Let $G$ be a connected graph of order $n$. Then $|MI_G|\leq f(n)$
with equality if $G=F'_n$, where
 \begin{align*}
f(n):=
\begin{cases}
{n\choose 2} &~ {\rm if}~ 1\leq n\le 8; \\
{{\lfloor \frac{n}{2} \rfloor}\choose 2}\cdot {{\lceil \frac{n}{2} \rceil}\choose 2} -(\lfloor \frac{n}{2} \rfloor-1)\cdot (\lceil \frac{n}{2} \rceil-1)+1 &~ {\rm if}~ 9\leq n\le 13; \\
10^{\frac{n-1}{5}}+\frac{n+144}{30}\cdot 6^{\frac{n-6}{5}} &~ {\rm if}~ 14\leq n\le 30;\\
10^{\frac{n-1}{5}}+\frac{n-1}{5}\cdot 6^{\frac{n-6}{5}} & ~ {\rm if}~ n\geq 31, \\
\end{cases}
\end{align*}
and
\begin{align*}
F'_n:=
\begin{cases}
K_n &~ {\rm if}~ 1\leq n\le 8; \\
K_1\ast \frac{n-1}{5}K_5 &~  {\rm if}~n \equiv 1 ~({\rm mod}~5)~ {\rm and}~ n\geq 31. \\
\end{cases}
\end{align*}
  \end{theorem}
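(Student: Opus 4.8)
The plan is to deduce the second theorem directly from Theorem~\ref{main1}, which is exactly the strategy the authors signal in the paragraph preceding the statement. First I would record the trivial inequality $|MI_G|\le |M_G|$, valid for every graph $G$, which already gives $|MI_G|\le f(n)$ once Theorem~\ref{main1} is granted; so the only real content is the equality cases, i.e.\ exhibiting connected graphs on $n$ vertices with exactly $f(n)$ maximum induced matchings for $1\le n\le 8$ and for $n\equiv 1\ (\mathrm{mod}\ 5)$ with $n\ge 31$.

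For these equality cases I would argue that in the extremal graph $F_n$ of Theorem~\ref{main1} \emph{every} maximal induced matching has the same size, hence every maximal induced matching is automatically maximum, so $|MI_{F_n}|=|M_{F_n}|=f(n)$. Concretely, for $1\le n\le 7$ we have $F_n=K_n$, whose induced matchings are precisely the single edges (any two disjoint edges in $K_n$ induce a $4$-cycle, not a $1$-regular graph), so every maximal induced matching consists of exactly one edge and there are $\binom{n}{2}$ of them; for $n=8$ the same holds for $K_8$. For $n\equiv 1\ (\mathrm{mod}\ 5)$, $n\ge 31$, we have $F_n=K_1\ast \tfrac{n-1}{5}K_5$: here one should check that in $K_1\ast sK_5$ (with $s=\tfrac{n-1}{5}$) a maximal induced matching picks exactly one edge out of each of the $s$ copies of $K_5$, never using the apex vertex $u$, because any edge at $u$ together with an edge inside the $K_5$ it is attached to would fail the induced condition, while an edge at $u$ alone is never maximal. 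Thus every maximal induced matching has size $s$, so it is maximum, and the count $10^{(n-1)/5}+\tfrac{n-1}{5}\cdot 6^{(n-6)/5}=f(n)$ carried over from Theorem~\ref{main1} is exactly $|MI_{F_n}|$. I would present this as a short lemma: \emph{if $G$ is a graph all of whose maximal induced matchings have equal cardinality, then $|MI_G|=|M_G|$}, and then verify the hypothesis for $K_n$ ($n\le 8$) and for $K_1\ast\frac{n-1}{5}K_5$ ($n\ge 31$, $n\equiv 1\bmod 5$).

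The only genuine obstacle is the uniform-cardinality verification for $K_1\ast\frac{n-1}{5}K_5$; everything else is bookkeeping. The point to nail down is that the apex vertex $u$ is adjacent to exactly one vertex in each $K_5$-block, so if a matching edge lies inside a block $B$ then $u$ sees an endpoint-free description only if $u$ is not matched, and conversely the edge $uv$ (with $v$ in some block $B$) can be extended inside $B$ by an edge avoiding $v$ and avoiding the neighbor of $u$ in $B$ — unless $B$ has too few vertices, but $|B|=5$ leaves room, so $uv$ is never maximal. Hence no maximal induced matching uses $u$, it restricts to a maximal induced matching in each block $K_5$, and a maximal induced matching of $K_5$ is a single edge; therefore every maximal induced matching of $K_1\ast\frac{n-1}{5}K_5$ has size $\frac{n-1}{5}$, as claimed. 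With this lemma in hand the theorem follows: $|MI_G|\le|M_G|\le f(n)$ by Theorem~\ref{main1}, and $|MI_{F'_n}|=|M_{F'_n}|=f(n)$ by the lemma applied to $F'_n$.
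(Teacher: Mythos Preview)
Your overall approach is exactly the paper's: deduce $|MI_G|\le|M_G|\le f(n)$ from Theorem~\ref{main1}, then check that in $F'_n$ every maximal induced matching is already maximum, so $|MI_{F'_n}|=|M_{F'_n}|=f(n)$. The paper in fact only asserts this last point without details, so your added verification is in the right spirit.

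However, the verification for $K_1\ast\frac{n-1}{5}K_5$ is wrong as written. You claim that no maximal induced matching uses the apex vertex $u$, arguing that the edge $uv_i$ ``can be extended inside $B_i$ by an edge avoiding $v_i$''. It cannot: any edge $xy$ in $B_i\setminus\{v_i\}$ has both endpoints adjacent to $v_i$ (since $B_i\cong K_5$), so $\{uv_i,xy\}$ is not induced. In fact there \emph{are} maximal induced matchings containing $u$: take $uv_i$ together with one edge from each $K_4\cong B_j\setminus\{v_j\}$ for $j\ne i$. These account precisely for the second summand $\frac{n-1}{5}\cdot 6^{(n-6)/5}$ in $f(n)$, so they cannot be absent. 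The conclusion you need still holds, because such a matching also has size $1+(s-1)=s=\frac{n-1}{5}$, matching the size of the matchings that avoid $u$ (one edge per $K_5$). The fix is simply to split into the two cases, apex covered versus not, and observe both yield matchings of size $\frac{n-1}{5}$; then indeed every maximal induced matching is maximum and the equality $|MI_{F'_n}|=f(n)$ follows.
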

\begin{figure}[ht!]
\begin{center}
\begin{tikzpicture}[scale=0.9,style=thick]
\tikzstyle{every node}=[draw=none,fill=none]
\def\vr{3pt} 

\begin{scope}[yshift = 0cm, xshift = 0cm]
\path (0,0) coordinate (x1);
\path (-1,-1.5) coordinate (x2);
\path (-2.85,-1.5) coordinate (x3);
\path (2.65,-1.5) coordinate (x4);

\draw (x1) --(x2);
\draw (x1) --(x3);
\draw (x1) --(x4);
\draw(0,0.6) circle[radius=0.6];
\draw(-1,-2.1) circle[radius=0.6];
\draw(-3,-2.1) circle[radius=0.6];
\draw(2.8,-2.1) circle[radius=0.6];

\draw (x1)  [fill=black] circle ;
\draw (x2)  [fill=white] circle ;
\draw (x3)  [fill=white] circle ;
\draw (x4)  [fill=white] circle ;
\draw[above] (x1)++(0,0.3) node {$K_r$};
\draw[above] (x2)++(0,-0.9) node {$K_{s_2}$};
\draw[above] (x3)++(-0.15,-0.9) node {$K_{s_1}$};
\draw[above] (x4)++(0.15,-0.9) node {$K_{s_t}$};
\draw[above] (x2)++(1.95,-0.9) node {$\cdots$};

\end{scope}
\end{tikzpicture}
\end{center}
\caption{ $K_r\ast (K_{s_1}\cup K_{s_2}\cup \cdots \cup K_{s_t})$.}
\label{fig1}
\end{figure}
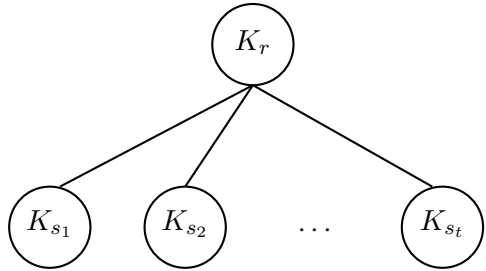

We will prepare some preliminaries in Section $2$ and present the proof of Theorem \ref{main1}
in Section $3$.

\section{Preliminaries\label{Preliminaries}}

Let $G$ be a graph and   $S\subseteq V(G)$. Denote by $G[S]$ the subgraph of $G$  induced by $S$, and denote by $G-S$ the induced subgraph  $G[V(G)\setminus S]$.  In particular, if $S=\{v\},$ then we write $G-v$ instead of
$G-\{v\}$.
For a vertex $v\in V(G)$, the  closed and open neighborhoods of $v$ is written as  $N_G[v]$ and $N_G(v)$, respectively.
For a subset $S\subset  V(G)$, the closed and open  neighborhoods of $S$ is defined as $N_G[S]=\cup_{v\in S}N_G[v]$ and $N_G(S)=N_G[S]\setminus S$, respectively.

For an induced matching $M$ of $G$ and a vertex $v \in V(G)$, we say that $M$ covers $v$ if $v$ is an endpoint
of an edge in $M$. Denote $V(M)$  by the set of vertices covered in $M$.
For two disjoint subsets $S, T\subseteq V(G)$, let $M_G(S,T)$ be the set of all maximal induced matchings of $G$ that cover no vertex
of $S$ and every vertex of $T$. Clearly, $M_G=M_G(\emptyset,\{v\})\uplus M_G(\{v\},\emptyset)$ holds for each vertex $v$ of $G$, where $\uplus$ is the disjoint union hereafter. If there is no confusion, we
omit subscripts from the notations.

Let $u, v$ be two vertices of a graph $G$. They are said to be {\it twins} ({\it false twins}) if $N_G[u]=N_G[v]$ ($N_G(u)=N_G(v)$).
Define the twin set of $v$  as $T_G(v) = \{u \in V(G)~|~N_G[u]=N_G[v]\}$,
i.e., $T_G(v)$ consists of the vertex $v$ and all its twins.
Write $\tau(G)$ as the number of twin sets in  $G$.
When $uv\in E(G),$ define $G_{v\rightarrow u}$ to be the graph obtained from $G$ by deleting the
edge $vx$ for every $x\in N(v)\setminus N[u]$ and adding the edge $vy$ for every $y\in N(u)\setminus N[v]$, i.e., making  $v$ into a twin of $u$. See Figure   \ref{fig2} for an example.
Further, define  $G_{T_{G}(v)\rightarrow u}$ to be the graph obtained from $G$ by making each vertex of $T_G(v)$ into a twin of $u$.
\begin{figure}[ht!]
\begin{center}
\begin{tikzpicture}[scale=0.9,style=thick]
\tikzstyle{every node}=[draw=none,fill=none]
\def\vr{3pt} 

\begin{scope}[yshift = 0cm, xshift = 0cm]
\path (-4.8,1.1) coordinate (u);
\path (-3.2,1.1) coordinate (v);
\path (-4.4,0) coordinate (x1);
\path (-3.6,0) coordinate (x2);
\path (-5.2,0) coordinate (x3);
\path (-2.8,0) coordinate (x4);
\path (-6,0) coordinate (x5);
\path (-2,0) coordinate (x6);
\path (3.2,1.1) coordinate (u1);
\path (4.8,1.1) coordinate (v1);
\path (3.6,0) coordinate (x7);
\path (4.4,0) coordinate (x8);
\path (2.8,0) coordinate (x9);
\path (5.2,0) coordinate (x10);
\path (2,0) coordinate (x11);
\path (6,0) coordinate (x12);
\draw (u) --(x1);
\draw (u) --(x2);
\draw (u) --(x3);
\draw (u) --(x5);
\draw (v) --(x1);
\draw (v) --(x2);
\draw (v) --(x4);
\draw (v) --(x6);
\draw (u1) --(x7);
\draw (u1) --(x8);
\draw (u1) --(x9);
\draw (u1) --(x11);
\draw (v1) --(x7);
\draw (v1) --(x8);
\draw (v1) --(x9);
\draw (v1) --(x11);
\draw (u) --(v);
\draw (u1) --(v1);
\draw (-4,-0.45) ellipse[x radius=2.8,y radius=1.1];
\draw (4,-0.45) ellipse[x radius=2.8,y radius=1.1];

\draw (u)  [fill=white] circle(\vr) ;
\draw (v)  [fill=white] circle(\vr) ;
\draw (x1)  [fill=white] circle(\vr) ;
\draw (x2)  [fill=white] circle(\vr) ;
\draw (x3)  [fill=white] circle(\vr) ;
\draw (x4)  [fill=white] circle(\vr) ;
\draw (x5)  [fill=white] circle(\vr) ;
\draw (x6)  [fill=white] circle(\vr) ;
\draw (u1)  [fill=white] circle(\vr) ;
\draw (v1)  [fill=white] circle(\vr) ;
\draw (x7)  [fill=white] circle(\vr) ;
\draw (x8)  [fill=white] circle(\vr) ;
\draw (x9)  [fill=white] circle(\vr) ;
\draw (x10)  [fill=white] circle(\vr) ;
\draw (x11)  [fill=white] circle(\vr) ;
\draw (x12)  [fill=white] circle(\vr) ;

\draw[above] (u)++(0,0.1) node {$u$};
\draw[above] (v)++(0,0.1) node {$v$};
\draw[above] (u1)++(0,0.1) node {$u$};
\draw[above] (v1)++(0,0.1) node {$v$};

\end{scope}
\end{tikzpicture}
\end{center}
\caption{Graphs $G$ (left) and  $G_{v\rightarrow u}$ (right).}
\label{fig2}
\end{figure}
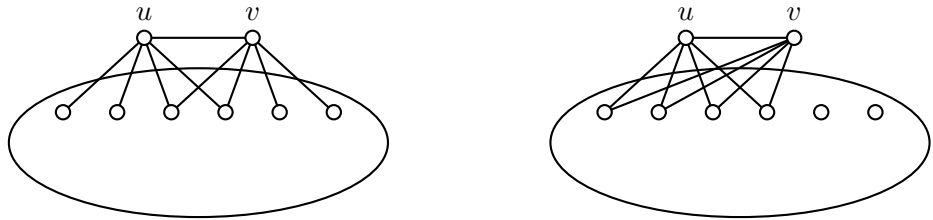

In the following, we give a condition such that the resulting graph $G_{v\rightarrow u}$ increases the number of maximal induced matchings.

\begin{lemma} \label{y1}
Let $G$ be a graph and $uv \in E(G)$. Let $W_1=N(u)\setminus N[v]$, $W_2=N(u)\cap N(v)$, $W_3=N(v)\setminus N[u]$, and $W_4=V(G)\setminus (N[u]\cup N[v])$. If there is no maximal induced matching $M$ of $G$ such that $V(M)\subseteq W_1\cup W_4$ with $V(M)\cap W_1\neq\emptyset$ and $V(M)\subseteq W_3\cup W_4$ with $V(M)\cap W_3\neq\emptyset$, then  $|M_{G_{u\rightarrow v}}|\geq|M_G|$ or $|M_{G_{v\rightarrow u}}|\geq|M_G|$.
\end{lemma}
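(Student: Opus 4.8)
The plan is to establish the stronger symmetric inequality
\[
|M_{G_{u\rightarrow v}}|+|M_{G_{v\rightarrow u}}|\ \geq\ 2\,|M_{G}|,
\]
which immediately yields the lemma. Start by partitioning $M_G$ according to how a maximal induced matching meets $\{u,v\}$. In any induced matching a covered vertex has degree exactly one in the induced subgraph, so a matching covering both $u$ and $v$ must contain the edge $uv$; hence $M_G=\mathcal A\uplus\mathcal B\uplus\mathcal C\uplus\mathcal D$, where $\mathcal A$ is the set of maximal induced matchings containing $uv$, $\mathcal B$ (resp.\ $\mathcal C$) those covering $u$ but not $v$ (resp.\ $v$ but not $u$), and $\mathcal D$ those covering neither $u$ nor $v$.

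Next, locate three of these classes inside $M_{G_{v\rightarrow u}}$ and $M_{G_{u\rightarrow v}}$. Since $G_{v\rightarrow u}$ differs from $G$ only in the edges incident with $v$ (the edges from $v$ to $W_3$ are removed and those from $v$ to $W_1$ are added), every $M\in\mathcal B\cup\mathcal D$ is still an induced matching of $G_{v\rightarrow u}$, and I claim it stays maximal. Any new enlargement of such an $M$ in $G_{v\rightarrow u}$ must use a newly created edge at $v$, which forces $v\notin V(M)$ and $V(M)\cap(\{u\}\cup W_1\cup W_2)=\emptyset$, i.e.\ $V(M)\subseteq W_3\cup W_4$; but then $V(M)\subseteq W_4$ would make $M\cup\{uv\}$ an induced matching of $G$, contradicting maximality of $M$ in $G$, while $V(M)\cap W_3\neq\emptyset$ is excluded by the hypothesis. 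For $M\in\mathcal B$ the vertex $u\in V(M)$ is a neighbour of $v$ in $G_{v\rightarrow u}$, which already blocks every new edge at $v$. Thus $\mathcal B\cup\mathcal D\subseteq M_{G_{v\rightarrow u}}$, and symmetrically $\mathcal C\cup\mathcal D\subseteq M_{G_{u\rightarrow v}}$.

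Now exploit that $u$ and $v$ are twins in $G_{v\rightarrow u}$ (that is, $N[u]=N[v]$ there), so the transposition $(u\ v)$ is an automorphism of $G_{v\rightarrow u}$; applying it to the embedded copy of $\mathcal B$ yields a further $|\mathcal B|$ maximal induced matchings of $G_{v\rightarrow u}$ covering $v$ but not $u$, disjoint from the previous families (the families are distinguished by which of $u,v$ they cover). Likewise $G_{u\rightarrow v}$ gains an extra $|\mathcal C|$ matchings covering $u$ but not $v$. The matchings containing $uv$ are handled separately: deleting $uv$ gives bijections $\mathcal A\leftrightarrow M_{G[W_4]}$, $\{M^*\in M_{G_{v\rightarrow u}}:uv\in M^*\}\leftrightarrow M_{G[W_3\cup W_4]}$, and $\{M^*\in M_{G_{u\rightarrow v}}:uv\in M^*\}\leftrightarrow M_{G[W_1\cup W_4]}$ (once $uv$ is present every vertex of $W_1\cup W_2$, resp.\ $W_2\cup W_3$, is automatically blocked, being adjacent to $u$ or $v$). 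To compare these three graphs I would use the elementary monotonicity fact that adding a single vertex with an arbitrary set of incident edges never decreases the number of maximal induced matchings — via the injection sending a maximal induced matching of the old graph to itself when it is still maximal, and otherwise to itself together with one admissible edge at the new vertex. Hence $|M_{G[W_3\cup W_4]}|,\,|M_{G[W_1\cup W_4]}|\geq|M_{G[W_4]}|=|\mathcal A|$, and assembling the pairwise‑disjoint families gives
\[
|M_{G_{v\rightarrow u}}|\geq|\mathcal D|+2|\mathcal B|+|M_{G[W_3\cup W_4]}|,\qquad |M_{G_{u\rightarrow v}}|\geq|\mathcal D|+2|\mathcal C|+|M_{G[W_1\cup W_4]}|,
\]
whose sum is at least $2(|\mathcal A|+|\mathcal B|+|\mathcal C|+|\mathcal D|)=2|M_G|$, as required.

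The step I expect to be the main obstacle is the maximality analysis for the class $\mathcal D$: it is precisely there that both halves of the hypothesis are invoked, and one must rule out exactly the configurations in which a newly created edge at $v$ (or at $u$) would enlarge the matching — the dichotomy according to whether $V(M)\subseteq W_4$ or $V(M)$ meets $W_3$ (resp.\ $W_1$) is the heart of the argument. Checking that the several families counted inside $G_{v\rightarrow u}$ and $G_{u\rightarrow v}$ are genuinely pairwise disjoint, and that the three contraction maps above are bijections, is routine but must be carried out with care.
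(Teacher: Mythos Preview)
Your proof is correct and closely parallels the paper's argument: both partition $M_G$ according to which of $u,v$ are covered, establish that every $M\in\mathcal D$ remains maximal in $G_{v\to u}$ via a case analysis on a hypothetical enlarging edge (this is exactly where the hypothesis enters), exploit that $u,v$ are twins in $G_{v\to u}$ to double the $\mathcal B$-contribution, and compare $|\mathcal A|=|M_{G[W_4]}|$ with $|M_{G[W_3\cup W_4]}|$ using monotonicity under induced subgraphs. The only structural difference is that the paper first assumes without loss of generality $|M_G(\emptyset,\{u\})|\ge|M_G(\emptyset,\{v\})|$ (equivalently $|\mathcal B|\ge|\mathcal C|$) and then proves the single inequality $|M_{G_{v\to u}}|\ge|M_G|$, whereas you prove the symmetric sum $|M_{G_{u\to v}}|+|M_{G_{v\to u}}|\ge 2|M_G|$; this is a pleasant variant that avoids the WLOG and makes the use of both halves of the hypothesis explicit.

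One small wording issue: your claim ``any new enlargement of such an $M$ in $G_{v\to u}$ must use a newly created edge at $v$'' is not literally true --- the enlarging edge could also be $uv$ or some $vz$ with $z\in W_2$ (present in both graphs). What is true is that the enlarging edge must be incident with $v$ (since $v\notin V(M)$ and only edges at $v$ were modified), and this already gives $V(M)\cap(\{u\}\cup W_1\cup W_2)=\emptyset$, so your subsequent dichotomy on $V(M)\cap W_3$ handles all cases correctly.
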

\begin{proof}  Suppose that  $|M_G(\emptyset, \{u\})| \geq  |M_G(\emptyset, \{v\})|$. Since $ M_G(\emptyset,\{u\})=M_G(\emptyset,\{u, v\})\uplus M_G(\{v\},\{u\})$ and $M_G(\emptyset,\{v\})=M_G(\emptyset,\{u, v\})\uplus M_G(\{u\},\{v\})$, it follows that $|M_G(\{v\}, \{u\})| \geq  |M_G(\{u\}, \{v\})|$. For convenience, we write $H=G_{v\rightarrow u}$.
The sets $M_G$ and  $M_{H}$ can be partitioned, respectively, as follows:
$$M_G =  M_G(\{v\},\{u\})\uplus M_G(\{u\},\{v\}) \uplus M_G(\emptyset,\{u, v\}) \uplus M_G(\{u, v\}, \emptyset)$$
and
$$M_{H} =  M_{H}(\{v\},\{u\})\uplus M_{H}(\{u\},\{v\}) \uplus M_{H}(\emptyset,\{u, v\}) \uplus M_{H}(\{u, v\}, \emptyset).$$

We first claim that $|M_G(\{v\},\{u\})|=|M_{H}(\{v\},\{u\})|$. Note that
$$|M_G(\{v\},\{u\})|=\sum_{p\in N_G(u)\setminus \{v\}}|M_G(\{v\},\{u,p\}|=\sum_{p\in N_G(u)\setminus \{v\}}|M_{G-N_G[u]\cup N_G[p]}|$$
and  similarly  $|M_H(\{v\},\{u\})|=\sum_{p\in N_H(u)\setminus \{v\}}|M_{H-N_H[u]\cup N_H[p]}|$. It is easy to see that for every $p\in N_G(u)\setminus
\{v\}$ and its corresponding $p\in N_H(u)\setminus \{v\}$, $G-N_G[u]\cup N_G[p]= H-N_H[u]\cup N_H[p].$  Thus the  claim follows.

 Then  $|M_{H}(\{u\},\{v\})|=|M_{H}(\{v\},\{u\})|=|M_{G}(\{v\},\{u\})|\geq |M_{G}(\{u\},\{v\})|$, by the assumption that $|M_G(\emptyset, \{u\})| \geq  |M_G(\emptyset, \{v\})|$ and the fact that $u,v$ are twins in $H$.

Next,  $|M_G(\emptyset,\{u, v\})|\leq |M_H(\emptyset,\{u, v\})|$, since $G[W_4]$ is an induced subgraph of $H[W_3\cup W_4]$, and
 $|M_G(\emptyset,\{u, v\})|=|M_{G-N_G[u]\cup N_G[v]}|=|M_{G[W_4]}|$ and $|M_H(\emptyset,\{u, v\})|=|M_{H-N_H[u]\cup N_H[v]}|=|M_{H[W_3\cup W_4]}|$.

Finally, we claim that $|M_G(\{u, v\}, \emptyset)|\leq |M_H(\{u, v\}, \emptyset)|$.
Let $M \in M_G(\{u, v\}, \emptyset)$ (note that $V(M)\cap (W_1\cup W_2\cup W_3)\neq \emptyset$). Now we prove that $M\in M_H(\{u, v\}, \emptyset)$. It is clear that $M$ is an induced matching of $H$, as   the only edges that are modified are incident with $v$ and $M$ does not cover $v$.
For contradiction, suppose that $M$ is not a maximal induced matching of $H$.
It implies that there is an edge $xy\in E(H)$ such that $M \cup \{xy\}$ is an induced matching of $H$.
 Then we divide into three cases to consider the possibilities of $x, y$.
In graph $H$, if $xy=uv$, then $V(M)\subseteq W_3\cup W_4$. Further, it implies that there is a $M\in M_G$ such that
$V(M)\subseteq W_3\cup W_4$ with $V(M)\cap W_3\neq\emptyset$, contradicting the assumption.
If $v \notin \{x, y\}$ and $u \in \{x, y\}$ or $u,v \notin \{x, y\}$, then $xy\in E(G)$ and thus  $M \cup \{xy\}$ is also an induced matching of $G$ since we only change edges incident with $v$ when transforming $G$ into $H$ and $M$ does not cover $v$. This contradicts the maximality of $M$.
If $v \in\{x, y\}$ and $u \notin \{x, y\}$,
then we may assume  that $x =v$.
Let $M{'}= M \cup \{vy\}$. Since $M'$ is an induced matching and $u$ and $v$ are twins in $H$, we infer that $M{''}= M \cup \{uy\}$ is also an induced matching of $H$. Note that the edge $uy$ is also present in $G$, so $M{''}$ is an induced matching of $G$. It contradicts the maximality of $M$.
This completes the proof of Lemma \ref{y1}.
\end{proof}

\begin{remark}  The constraint conditions in Lemma  \ref{y1} are necessary.
Consider $G=C_5=v_1v_2v_3v_4v_5v_1$. Then
$|M_G|=5> |M_{G_{v_1\rightarrow v_2}}|=|M_{G_{v_2\rightarrow v_1}}|=4.$
\end{remark}

Repeating the above operation on all the vertices in $T_{G}(v)$, we  have
\begin{lemma} \label{y-y1}
Let $G$ be a graph and $uv \in E(G)$. Let $W_1=N(u)\setminus N[v]$, $W_2=N(u)\cap N(v)$, $W_3=N(v)\setminus N[u]$, and $W_4=V(G)\setminus (N[u]\cup N[v])$. If there is no maximal induced matching $M$ of $G$ such that $V(M)\subseteq W_1\cup W_4$ with $V(M)\cap W_1\neq\emptyset$ and $V(M)\subseteq W_3\cup W_4$ with $V(M)\cap W_3\neq\emptyset$, then  $|M_{G_{T_{G}(u)\rightarrow v}}|\geq|M_G|$ or $|M_{G_{T_{G}(v)\rightarrow u}}|\geq|M_G|$.
\end{lemma}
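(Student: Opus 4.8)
The plan is to reduce Lemma \ref{y-y1} to Lemma \ref{y1} by iterating the single-vertex twinning operation along the vertices of $T_G(v)$ (or of $T_G(u)$), showing at each step that the hypotheses of Lemma \ref{y1} are preserved so that the inequality $|M_{G'}|\geq |M_G|$ can be chained. Write $T_G(v)=\{v=v_1,v_2,\dots,v_k\}$. Since all the $v_i$ share the same closed neighborhood, each $v_i$ is adjacent to $u$, and the four sets $W_1,W_2,W_3,W_4$ computed from the pair $uv_i$ are \emph{identical} to those computed from $uv$: indeed $N(v_i)=N(v)$ as closed neighborhoods, so $W_3=N(v)\setminus N[u]=N(v_i)\setminus N[u]$, $W_1=N(u)\setminus N[v]=N(u)\setminus N[v_i]$, and likewise for $W_2,W_4$. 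Hence the ``no bad maximal induced matching'' hypothesis of Lemma \ref{y1}, which only refers to $W_1,W_3,W_4$, holds verbatim for every pair $uv_i$ in $G$.

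First I would invoke Lemma \ref{y1} for the pair $uv$: it yields $|M_{G_{u\to v}}|\geq |M_G|$ or $|M_{G_{v\to u}}|\geq |M_G|$. In the first case we are already done, since $G_{u\to v}$ makes $u$ a twin of $v$ and, because nothing was changed at the other $v_i$, the graph $G_{u\to v}$ coincides with $G_{T_G(u)\to v}$ once we observe that $T_G(u)$ — whatever it is — gets moved onto $v$; I would spell out that after the single operation $u$ becomes a twin of $v$, and then re-run the argument to absorb any remaining twins of $u$. In the second case, $G_{v\to u}$ is obtained from $G$ by making $v_1=v$ a twin of $u$; I then want to continue with $v_2$. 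The key point is that in $H_1:=G_{v\to u}$ the vertex $v_2$ still has $N_{H_1}(v_2)=N_G(v_2)=N_G(v)$ (we only changed edges at $v_1$, and $v_1\notin N(v_2)$ is false — $v_1$ \emph{is} adjacent to $v_2$ since they are twins — so I must check this carefully: $v_1v_2$ was an edge of $G$ and remains one of $H_1$ because $v_1$ is being made a twin of $u$ and $v_2\in N(u)$, so the edge $v_1v_2$ is retained). Thus the four sets for the pair $uv_2$ in $H_1$ are again $W_1,W_2,W_3,W_4$, and the hypothesis ``no bad maximal induced matching in $H_1$'' must be re-verified — this is where I would be careful.

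The main obstacle, then, is checking that the hypothesis of Lemma \ref{y1} \emph{persists} after each twinning step, i.e.\ that $H_1=G_{v\to u}$ has no maximal induced matching $M$ with $V(M)\subseteq W_1\cup W_4$ meeting $W_1$, nor one with $V(M)\subseteq W_3\cup W_4$ meeting $W_3$. For the $W_3\cup W_4$ side this is immediate because $H_1[W_3\cup W_4]=G[W_3\cup W_4]$ (the operation only touches edges incident to $v\in W_3$'s ``partner'' — more precisely, $v\notin W_3\cup W_4$ since $v\in N[u]$, wait: $v\in N(u)$ so $v\notin W_3\cup W_4$, and the only modified edges are incident to $v$, hence $H_1[W_3\cup W_4]=G[W_3\cup W_4]$). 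For the $W_1\cup W_4$ side, note again $v\notin W_1\cup W_4$, so $H_1[W_1\cup W_4]=G[W_1\cup W_4]$ as well; but a maximal induced matching of $H_1$ contained in $W_1\cup W_4$ need not be maximal in $G$ a priori. However, any induced matching of $H_1$ with vertex set inside $W_1\cup W_4$ is an induced matching of $G$ (same induced subgraph), and if it failed to be maximal in $G$ it would extend by some edge of $G$ with both endpoints in... — here one must use that $N_G(v_1)\subseteq N_G[u]$, so no edge of $G$ from $W_1\cup W_4$ to $v_1$ exists except through $N[u]$, forcing any extending edge of $G$ to also be an edge of $H_1$; this recovers maximality in $H_1$ and gives the contradiction. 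With the hypothesis shown to persist, I apply Lemma \ref{y1} to $uv_2$ in $H_1$: either $|M_{(H_1)_{u\to v_2}}|\geq |M_{H_1}|\geq|M_G|$ (and since $v_1$ is already a twin of $u$, making $u$ a twin of $v_2$ also makes $v_1$ a twin of $v_2=$ new center, matching the $G_{T_G(u)\to v}$ side after relabelling), or $|M_{(H_1)_{v_2\to u}}|\geq |M_{H_1}|\geq |M_G|$, and I set $H_2:=(H_1)_{v_2\to u}$ and repeat. Inducting on $i$ from $1$ to $k$, the ``$v\to u$'' branch at every step produces, after $k$ steps, exactly the graph $G_{T_G(v)\to u}$ with $|M_{G_{T_G(v)\to u}}|\geq|M_G|$; if at any step the ``$u\to v_i$'' branch is selected instead, a symmetric finishing argument (collapsing all of $T_G(u)$ onto $v_i$, which is legitimate because at that stage $v_1,\dots,v_{i-1}$ are already twins of $u$) yields $|M_{G_{T_G(u)\to v}}|\geq |M_G|$. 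Either way one of the two claimed inequalities holds, completing the proof.
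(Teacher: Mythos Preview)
Your plan—iterate Lemma~\ref{y1} across the twins $v=v_1,\dots,v_k$—is exactly the paper's (one-sentence) argument. But your execution has a real gap in the ``branch flip'' case. You assert that if at step~$i$ the disjunction yields the $u\to v_i$ branch, then ``since $v_1,\dots,v_{i-1}$ are already twins of $u$, making $u$ a twin of $v_i$ also makes $v_1$ a twin of $v_i$''. This is false: the operation $(\cdot)_{u\to v_i}$ modifies only the edges at $u$, so in $(H_{i-1})_{u\to v_i}$ one has $N[u]=N_G[v]$ while each $v_j$ with $j<i$ still has closed neighbourhood $N_G[u]$. Hence $(H_{i-1})_{u\to v_i}$ is neither $G_{T_G(u)\to v}$ nor $G_{T_G(v)\to u}$, and your ``symmetric finishing argument'' does not land on either target graph. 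The same defect infects your treatment of the first case (``re-run the argument to absorb any remaining twins of $u$'').

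A second, related problem is the persistence check. Using Lemma~\ref{y1} as a black box at step~$2$ requires \emph{both} halves of the hypothesis in $H_1$. Your argument for the $W_1$-half invokes ``$N_G(v_1)\subseteq N_G[u]$'', which is false whenever $W_3\neq\emptyset$; and indeed the $W_1$-half need not persist, because in $H_1$ the vertex $v_1$ is now adjacent to every vertex of $W_1$, so an edge $v_1y$ that extended $M$ in $G$ is blocked in $H_1$. The right fix is not to invoke Lemma~\ref{y1} as a black box but to reuse its \emph{proof}: decide the direction once (say $|M_G(\{v\},\{u\})|\ge|M_G(\{u\},\{v\})|$) and always perform $v_i\to u$. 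Then only the $W_3$-half is ever needed, and that half \emph{does} persist (your ``$W_3$ side is immediate'' observation is correct and is the relevant one). What you still owe—and do not address—is that the directional comparison $|M_{H_{i-1}}(\{v_i\},\{u\})|\ge|M_{H_{i-1}}(\{u\},\{v_i\})|$ continues to hold at every step.
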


Similar to the proof of Lemma 4 in \cite{Basavaraju}, we   have the following result.
\begin{lemma} \label{twins}
Let $G$ be a graph and $uv \in E(G)$. If $u$ and $v$ are not twins in $G$, then $\tau(G_{u\rightarrow v})\leq \tau(G)$ and $\tau(G_{v\rightarrow u})\leq \tau(G)$. In particular,   $\tau(G_{v\rightarrow u})< \tau(G)$ if $|T_{G}(v)|=1$; and $\tau(G_{v\rightarrow u})= \tau(G)$ if $|T_{G}(v)|\geq 2$; and      $\tau(G_{T_{G}(v)\rightarrow u})< \tau(G)$.
\end{lemma}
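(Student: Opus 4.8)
\emph{Proof plan.} Throughout I would use that $\tau(G)$ equals the number of distinct sets $N_G[w]$, $w\in V(G)$, since the twin sets are exactly the fibres of the map $w\mapsto N_G[w]$. It suffices to treat $H:=G_{v\rightarrow u}$, the statement for $G_{u\rightarrow v}$ being symmetric. Set $W_1=N(u)\setminus N[v]$, $W_2=N(u)\cap N(v)$, $W_3=N(v)\setminus N[u]$ and $W_4=V(G)\setminus(N[u]\cup N[v])$. First I would record the effect of the operation on closed neighborhoods: only edges incident with $v$ change, a vertex of $W_1$ gains $v$ as a neighbor and a vertex of $W_3$ loses it, the $v$-adjacency of a vertex of $W_2\cup W_4\cup\{u\}$ is unchanged, and $N_H[v]=N_G[u]=N_H[u]$. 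Thus, for $w\ne v$, one has $N_H[w]=N_G[w]\cup\{v\}$ if $w\in W_1$, $N_H[w]=N_G[w]\setminus\{v\}$ if $w\in W_3$, and $N_H[w]=N_G[w]$ otherwise. Next I would prove a ``no straddling'' remark: every $G$-twin class avoiding $u$ and $v$ is contained in a single $W_i$ (since whether $u$, resp.\ $v$, lies in the common closed neighborhood of the class distinguishes $W_1,W_2,W_3,W_4$ pairwise), and, because $u,v$ are not twins, $T_G(u)\subseteq\{u\}\cup W_2$ and $T_G(v)\subseteq\{v\}\cup W_2$. Combining these with the neighborhood formulas, $N_G[w]=N_G[w']$ with $w,w'\ne v$ forces $N_H[w]=N_H[w']$, so $N_G[w]\mapsto N_H[w]$ is a well-defined surjection from $\{N_G[w]:w\ne v\}$ onto $\{N_H[w]:w\ne v\}$; since $N_H[v]=N_H[u]$ with $u\ne v$, we get $\tau(H)=\lvert\{N_H[w]:w\ne v\}\rvert\le\lvert\{N_G[w]:w\ne v\}\rvert\le\tau(G)$.

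For the refinements observe that $\lvert\{N_G[w]:w\ne v\}\rvert$ equals $\tau(G)-1$ if $T_G(v)=\{v\}$ and $\tau(G)$ otherwise. If $\lvert T_G(v)\rvert=1$ the displayed chain already gives $\tau(H)\le\tau(G)-1<\tau(G)$. If $\lvert T_G(v)\rvert\ge2$, fix $w^\ast\in T_G(v)\setminus\{v\}$; then $w^\ast\in W_2$, so $N_H[w^\ast]=N_G[w^\ast]=N_G[v]$ and $\lvert\{N_G[w]:w\ne v\}\rvert=\tau(G)$, and it remains only to show the surjection $N_G[w]\mapsto N_H[w]$ is injective. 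A failure of injectivity would, by the neighborhood formulas, force (after possibly swapping $w$ and $w'$) either $w\in W_1$, $w'\in W_2$ with $N_G[w']=N_G[w]\cup\{v\}$, or $w\in W_3$, $w'\in W_4\cup\{u\}$ with $N_G[w']=N_G[w]\setminus\{v\}$; in the latter $w'=u$ is impossible because a vertex of $W_3$ is non-adjacent to $u$, so $w'\in W_4$. In either case one of $w,w'$ is adjacent to $v$ (namely $w'$ in the first case, $w$ in the second), hence $w^\ast$ is adjacent to it, and then the displayed equation relating $N_G[w]$ and $N_G[w']$ forces $w^\ast$ to be adjacent to the other of $w,w'$ too, so that the other one lies in $N_G[w^\ast]=N_G[v]$ — contradicting $w\in W_1$ in the first case and $w'\in W_4$ in the second. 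Hence the map is injective and $\tau(H)=\tau(G)$.

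The last assertion I would obtain by running the same factoring with the whole set $T:=T_G(v)$ in place of $\{v\}$: writing $H'=G_{T_G(v)\rightarrow u}$, the altered edges all join a vertex of $T$ to $W_1\cup W_3$, the induced map $N_G[w]\mapsto N_{H'}[w]$ over $w\notin T$ is well defined so $\lvert\{N_{H'}[w]:w\notin T\}\rvert\le\lvert\{N_G[w]:w\notin T\}\rvert$, each $v_i\in T$ satisfies $N_{H'}[v_i]=N_G[u]$ and so contributes nothing new, and since in $G$ the value $N_G[v]$ is attained only on $T$ we get $\tau(H')\le\tau(G)-1<\tau(G)$. The main obstacle is the injectivity step in the case $\lvert T_G(v)\rvert\ge2$; everything else is bookkeeping driven by the neighborhood formulas and the ``no straddling'' remark, which is also exactly where the hypothesis that $u$ and $v$ are not twins is used. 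I would present the write-up in this order: the neighborhood formulas, the no-straddling remark, the factoring yielding $\tau(H)\le\tau(G)$, then the two cases on $\lvert T_G(v)\rvert$, and finally the statement for $G_{T_G(v)\rightarrow u}$.
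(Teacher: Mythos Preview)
Your argument is correct. The paper does not actually prove this lemma; it only remarks that the proof is ``similar to the proof of Lemma~4 in \cite{Basavaraju}'' and gives no details, so there is no in-paper argument to compare against. Your approach---identifying $\tau(G)$ with the number of distinct closed neighborhoods and exhibiting the well-defined surjection $N_G[w]\mapsto N_H[w]$ over $w\ne v$---is the natural one and is presumably close to what the referenced lemma does.

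One minor imprecision worth fixing in the write-up: in the injectivity step for $|T_G(v)|\ge 2$, your case list ``$w\in W_1$, $w'\in W_2$'' should also allow $w'=u$, since $u$ belongs to the ``unchanged'' transformation class $W_2\cup W_4\cup\{u\}$ and satisfies $v\in N_G[u]$, so $N_G[u]=N_G[w]\cup\{v\}$ with $w\in W_1$ is not ruled out by the neighborhood formulas alone. Your contradiction via $w^\ast$ nevertheless goes through verbatim in that sub-case (because $u$ is adjacent to $v$, hence to $w^\ast$, forcing $w^\ast\in N_G[w]$ and then $w\in N_G[v]$), so this is only a slip in the enumeration, not a gap in the argument.
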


Next, for any given graph $G$ and a vertex $v\in V(G)$, we aim to establish the relationship between
$M_{G}$ and $M_{G-v}$. We partition  $M_G(\emptyset,\{v\})$ and $M_{G-v}$
into two distinct sets as follows, and these notations will be frequently utilized in subsequent discussions.
A maximal induced matching $M\in M_G(\emptyset,\{v\})$ is referred to as {\it $A$-type} with respect to vertex
$v$  if the induced matching obtained by deleting the edge containing $v$
is no longer a maximal induced matching in the graph $G-v$. We denote the number of all $A$-type maximal induced matchings of $G$ with respect to vertex $v$ as $\alpha(v;G)$.
Additionally, we denote $\beta(v;G)$ as the count of maximal induced matchings $X\in M_G(\emptyset,\{v\})$
such that when the edge incident with $v$ is removed from
$X$, the resulting matching remains maximal in  $G-v$.
Clearly,  $|M_G(\emptyset,\{v\})|=\alpha(v;G)+\beta(v;G)$.

Let $M_{G-v}:=\Theta(v;G)\cup \Phi(v;G),$ where $\Theta(v;G):=\{~M~|~M\in M_{G-v}~\text{and}~M\in M_{G}\}$ and
$\Phi(v;G):=\{~M~|~M\in M_{G-v}~\text{and}~M\notin M_{G}\}$. Denote by $\theta(v;G)=|\Theta(v;G)|$ and $\varphi(v;G)=|\Phi(v;G)|.$
Observe that every $M\in \Phi(v;G)$ can expand to be a maximal induced matching  of $G$ by adding some edge incident with $v$. This implies that $\varphi(v;G)\leq \beta(v;G).$
In particular, if for every $M\in \Phi(v;G)$, there is exactly one edge $vp\in E(G)$ such that $M\cup \{vp\}\in M_G(\emptyset,\{v\})$, then $\varphi(v;G)=\beta(v;G).$
It can readily be verified that
$\theta(v;G)=|M_G(\{v\},\emptyset)|$. From these relations and notations,  it is easy to establish the following result.
\begin{proposition}  \label{bopro1}
Let $G$ be a  graph with $v\in V(G)$. Then
$$|M_{G-v}|+\alpha(v;G)\leq |M_G|\leq |M_{G-v}|+\sum_{p\in N(v)}|M_{G-N[v]\cup N[p]}|.$$
Furthermore, the first inequality holds  if and only if $\beta(v;G)=\varphi(v;G);$
and  the second inequality holds  if and only if $\varphi(v;G)=0.$
\end{proposition}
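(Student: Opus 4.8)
The plan is to prove Proposition \ref{bopro1} directly from the partition identities and the structural definitions of $\alpha,\beta,\theta,\varphi$ established just above the statement. The starting point is the decomposition $M_G = M_G(\emptyset,\{v\}) \uplus M_G(\{v\},\emptyset)$, together with $|M_{G-v}| = \theta(v;G) + \varphi(v;G)$ and the observations $\theta(v;G) = |M_G(\{v\},\emptyset)|$ and $|M_G(\emptyset,\{v\})| = \alpha(v;G) + \beta(v;G)$. Combining these, $|M_G| = |M_G(\{v\},\emptyset)| + |M_G(\emptyset,\{v\})| = \theta(v;G) + \alpha(v;G) + \beta(v;G) = |M_{G-v}| - \varphi(v;G) + \alpha(v;G) + \beta(v;G)$. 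Since $\varphi(v;G) \le \beta(v;G)$ (every $M\in\Phi(v;G)$ extends to a maximal induced matching of $G$ by adding an edge at $v$, and such an extension lies in $M_G(\emptyset,\{v\})$, so $\Phi$ injects into the $\beta$-counted set), the quantity $\beta(v;G) - \varphi(v;G)$ is nonnegative, which yields the lower bound $|M_{G-v}| + \alpha(v;G) \le |M_G|$, with equality precisely when $\beta(v;G) = \varphi(v;G)$.

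For the upper bound, I would again write $|M_G| = |M_G(\{v\},\emptyset)| + |M_G(\emptyset,\{v\})|$ and bound each piece. For $M_G(\{v\},\emptyset) = \Theta(v;G)$ we have $|M_G(\{v\},\emptyset)| = \theta(v;G) \le |M_{G-v}|$ only if $\varphi(v;G)=0$; more robustly, $\theta(v;G) = |M_{G-v}| - \varphi(v;G)$. For $M_G(\emptyset,\{v\})$, every such maximal induced matching contains exactly one edge $vp$ with $p \in N(v)$, and deleting $N[v]\cup N[p]$ leaves a graph whose maximal induced matchings are in bijection with those matchings of $M_G(\emptyset,\{v\})$ containing $vp$ (the remaining edges of $M$ form a maximal induced matching of $G-N[v]\cup N[p]$, and conversely). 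Hence $|M_G(\emptyset,\{v\})| = \sum_{p\in N(v)} |M_{G}(\emptyset,\{v,p\})| \le \sum_{p\in N(v)}|M_{G-N[v]\cup N[p]}|$; I should check whether this is an equality or merely an inequality — it is in fact an equality $|M_G(\emptyset,\{v,p\})| = |M_{G-N[v]\cup N[p]}|$, already used in the proof of Lemma \ref{y1}. Adding the two bounds, $|M_G| = \theta(v;G) + |M_G(\emptyset,\{v\})| = |M_{G-v}| - \varphi(v;G) + \sum_{p\in N(v)}|M_{G-N[v]\cup N[p]}| \le |M_{G-v}| + \sum_{p\in N(v)}|M_{G-N[v]\cup N[p]}|$, with equality iff $\varphi(v;G)=0$.

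There is essentially no single hard obstacle here; the proposition is a bookkeeping consequence of definitions. The one point requiring care is the equality characterization for the upper bound: I must make sure the only slack introduced in the chain of inequalities comes from the term $-\varphi(v;G)$, i.e. that $|M_G(\emptyset,\{v,p\})| = |M_{G-N[v]\cup N[p]}|$ holds exactly (so that the sum over $p\in N(v)$ contributes no extra slack). This is true because a maximal induced matching $M$ of $G$ covering $v$ via the edge $vp$ decomposes uniquely as $\{vp\}$ plus a maximal induced matching of the graph obtained by deleting $N[v]\cup N[p]$, and this correspondence is a bijection — the same fact invoked in Lemma \ref{y1}'s proof. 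With that in hand, both inequalities and both equality conditions drop out immediately, so I would keep the write-up to a few lines, citing the bijection rather than re-deriving it.
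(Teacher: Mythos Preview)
Your proposal is correct and follows essentially the same approach as the paper: both arguments use the partition $|M_G|=|M_G(\{v\},\emptyset)|+|M_G(\emptyset,\{v\})|$, the identities $|M_G(\{v\},\emptyset)|=\theta(v;G)$ and $|M_G(\emptyset,\{v\})|=\alpha(v;G)+\beta(v;G)$, the inequality $\varphi(v;G)\le\beta(v;G)$, and the bijection $|M_G(\emptyset,\{v,p\})|=|M_{G-N[v]\cup N[p]}|$ to obtain both bounds and their equality conditions. Your write-up is slightly more explicit about why the bijection holds exactly (so that all slack in the upper bound comes from the $-\varphi(v;G)$ term), but the underlying argument is the same.
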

\begin{proof} Following the definitions of $\alpha(v;G), \beta(v;G), \Theta(v;G),$ and $\Phi(v;G)$ above, we have
\begin{equation*}
\begin{aligned}
|M_G|& =|M_G(\{v\},\emptyset)|+|M_G(\emptyset,\{v\})|\\
&=|M_G(\{v\},\emptyset)|+\alpha(v;G)+\beta(v;G)\\
&\geq \theta(v;G)+\alpha(v;G)+\varphi(v;G)\\
&=|M_{G-v}|+\alpha(v;G)
\end{aligned}
\end{equation*}
and   equality holds if and only if $\beta(v;G)=\varphi(v;G).$

The upper bound on $|M_G|$ follows from
\begin{equation*}
\begin{aligned}
|M_G|& =|M_G(\{v\},\emptyset)|+|M_G(\emptyset,\{v\})|\\
&\leq |M_{G-v}|+\sum_{p\in N(v)}|M_G(\emptyset,\{v,p\}|\\
&= |M_{G-v}|+ \sum_{p\in N(v)}|M_{G-N[v]\cup N[p]}|
\end{aligned}
\end{equation*}
and equality holds if and only if $|M_G(\{v\},\emptyset)|=|M_{G-v}|.$
Recall that $|M_G(\{v\},\emptyset)|=\theta(v;G)$ and $|M_{G-v}|=\theta(v;G)+\varphi(v;G)$.
The result follows.
\end{proof}
In Proposition \ref{bopro1}, we emphasize that in the expression  $\sum_{p\in N(v)}|M_{G-N[v]\cup N[p]}|,$
$|M_{G-N[v]\cup N[p]}|=1$ if $G-N[v]\cup N[p]$ contains no edges, as it  represents the contribution of each edge $vp$ to $M_G(\emptyset,\{v\})$.

\begin{remark} In Proposition  \ref{bopro1}, since $\alpha(v;G)\geq 0,$ $|M_G|\geq |M_{G-v}|.$ Further, $|M_G|\geq |M_H|$ if $H$ is an (vertex-)induced subgraph of $G.$
But, in general, Proposition  \ref{bopro1} cannot hold if $H$  is an edge-induced subgraph of  $G.$
See an example:~$|M_{K_4\ast K_4}|=|M_{K_8}|=28>|M_H|$, where $H$  is an any connected graph of order $8$
except for $K_4\ast K_4$, $K_8.$
\end{remark}

Next we consider some special cases of Proposition  \ref{bopro1} for later use.
Since $\varphi(v;G)\leq \beta(v;G),$ $\beta(v;G)=0$ implies the following corollary.
\begin{corollary}\label{inhold0}
Let $G$ be a  graph with $v\in V(G)$. If $\beta(v;G)=0$ $($or equivalently $\alpha(v;G)=|M_G(\emptyset,\{v\})|$$)$, then  $|M_G|=|M_{G-v}|+\alpha(v;G)=|M_{G-v}|+\sum_{p\in N(v)}|M_{G-N[v]\cup N[p]}|.$
\end{corollary}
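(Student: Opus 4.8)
The statement to prove is Corollary \ref{inhold0}, which asserts that if $\beta(v;G)=0$ then both inequalities in Proposition \ref{bopro1} collapse to equalities, giving $|M_G|=|M_{G-v}|+\alpha(v;G)=|M_{G-v}|+\sum_{p\in N(v)}|M_{G-N[v]\cup N[p]}|$.

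The plan is to derive everything directly from Proposition \ref{bopro1} together with the inequality $\varphi(v;G)\le\beta(v;G)$ established in the paragraph preceding it. First I would note the parenthetical equivalence: since $|M_G(\emptyset,\{v\})|=\alpha(v;G)+\beta(v;G)$ always holds, the hypothesis $\beta(v;G)=0$ is literally the same as $\alpha(v;G)=|M_G(\emptyset,\{v\})|$, so no work is needed there. Next, because $0\le\varphi(v;G)\le\beta(v;G)=0$, we get $\varphi(v;G)=0$, and in particular $\beta(v;G)=\varphi(v;G)$. Now apply Proposition \ref{bopro1}: its first inequality becomes an equality precisely when $\beta(v;G)=\varphi(v;G)$, which we have just verified, yielding $|M_G|=|M_{G-v}|+\alpha(v;G)$; its second inequality becomes an equality precisely when $\varphi(v;G)=0$, which we have also verified, yielding $|M_G|=|M_{G-v}|+\sum_{p\in N(v)}|M_{G-N[v]\cup N[p]}|$. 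Chaining the two equalities gives the displayed chain in the corollary.

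There is essentially no obstacle here; the corollary is a one-line consequence of the proposition, and the only thing to be careful about is spelling out the equivalence of the two forms of the hypothesis and invoking the exact "if and only if" clauses of Proposition \ref{bopro1} rather than just its inequalities. For completeness I would write the short argument as follows.

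\medskip

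\noindent\textbf{Proof of Corollary \ref{inhold0}.} Since $|M_G(\emptyset,\{v\})|=\alpha(v;G)+\beta(v;G)$, the condition $\beta(v;G)=0$ is equivalent to $\alpha(v;G)=|M_G(\emptyset,\{v\})|$. Assume this holds. From $0\le\varphi(v;G)\le\beta(v;G)=0$ we obtain $\varphi(v;G)=0$, and hence also $\beta(v;G)=\varphi(v;G)$. By Proposition \ref{bopro1}, the equality $\beta(v;G)=\varphi(v;G)$ gives $|M_G|=|M_{G-v}|+\alpha(v;G)$, while the equality $\varphi(v;G)=0$ gives $|M_G|=|M_{G-v}|+\sum_{p\in N(v)}|M_{G-N[v]\cup N[p]}|$. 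Combining these two identities yields $|M_G|=|M_{G-v}|+\alpha(v;G)=|M_{G-v}|+\sum_{p\in N(v)}|M_{G-N[v]\cup N[p]}|$, as claimed. \qed
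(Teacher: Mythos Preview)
Your proof is correct and follows exactly the paper's approach: the paper simply remarks that since $\varphi(v;G)\le\beta(v;G)$, the hypothesis $\beta(v;G)=0$ forces $\varphi(v;G)=0$, so both equality conditions in Proposition~\ref{bopro1} are met. Your write-up just makes this one-line observation fully explicit.
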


A block   of $G$ is a maximal $2$-connected subgraph, and it is said to be a
{\it pendant block} of $G$ if it has exactly one cutpoint in $G$ as a whole.
As a consequence of Corollary \ref{inhold0}, we have
\begin{corollary}\label{bocor2}
If $G$ contains a pendant block $K_r~(r\geq 3)$ with cutpoint $v$, then
$$|M_G|={{r-1}\choose 2}\cdot |M_{G-K_r}|+\sum_{p\in N(v)}|M_{G-N[v]\cup N[p]}|.$$
\end{corollary}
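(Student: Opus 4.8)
\textbf{Proof proposal for Corollary \ref{bocor2}.}
The plan is to apply Corollary \ref{inhold0} to the cutpoint $v$ of the pendant block $K_r$, after first checking that the hypothesis $\beta(v;G)=0$ is satisfied. To see this, suppose $M\in M_G(\emptyset,\{v\})$, so $M$ is a maximal induced matching of $G$ that does not cover $v$. Since $K_r$ with $r\geq 3$ is a pendant block with unique cutpoint $v$, the $r-1$ vertices of $K_r-v$ form a clique in $G$ all of whose neighbors lie inside $V(K_r)$. Because $M$ is maximal and does not cover $v$, no edge of $M$ can be incident with a vertex of $K_r-v$ either: an edge of $M$ covering some $x\in V(K_r)\setminus\{v\}$ would have to be an edge inside $K_r$, hence of the form $xy$ with $y\in V(K_r)\setminus\{v\}$, and then $M$ covers no vertex of $N_G[v]$ at all (the only other vertex of $N_G[v]$ outside $K_r$ would still be free — more carefully, since $K_r-v$ is covered only by its two matched vertices and $v$ is uncovered, one checks that some edge incident to $v$ can be added), so $M\cup\{vz\}$ would be an induced matching for a suitable $z$, contradicting maximality. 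Thus every $M\in M_G(\emptyset,\{v\})$ is in fact $A$-type: deleting "the edge containing $v$" is vacuous but the relevant point is that $M$ restricted to $G-v$ fails to be maximal, precisely because some vertex of $K_r-v$ becomes extendable. Hence $\beta(v;G)=0$.

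Granting $\beta(v;G)=0$, Corollary \ref{inhold0} gives
\[
|M_G|=|M_{G-v}|+\sum_{p\in N(v)}|M_{G-N[v]\cup N[p]}|.
\]
It remains to evaluate $|M_{G-v}|$. After removing $v$, the block $K_r$ leaves behind the clique $K_{r-1}=K_r-v$ as a separate component of $G-v$ (it is disconnected from the rest because $v$ was the only cutpoint). Therefore $|M_{G-v}|=|M_{K_{r-1}}|\cdot|M_{G-K_r}|$, where $G-K_r$ denotes $G$ with all of $V(K_r)$ removed. Since every maximal induced matching of a complete graph $K_{r-1}$ ($r-1\geq 2$) is a single edge, $|M_{K_{r-1}}|=\binom{r-1}{2}$. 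Combining, $|M_{G-v}|=\binom{r-1}{2}\cdot|M_{G-K_r}|$, which yields the claimed formula.

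The only genuinely delicate point is the verification that $\beta(v;G)=0$, i.e.\ that no maximal induced matching avoiding $v$ can remain maximal after deleting $v$; equivalently, that every such matching leaves a vertex of $K_r-v$ uncovered and unblocked in $G-v$ but blocked only through $v$ in $G$. This hinges on the structural fact that $K_r-v$ is a clique whose external attachments all go through $v$, so that whenever $M$ misses $v$ it misses all of $K_r-v$ as well (any edge of $M$ inside $K_r$ would have to use $v$), and then for $r-1\geq 2$ the vertices of $K_r-v$ are pairwise adjacent and each adjacent to $v$, so $M$ can be extended into $K_r$ in $G$ but the corresponding extension inside $G-v$ using an edge of $K_r-v$ shows $M$ is not maximal there. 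Once this is in place the rest is the bookkeeping above, and no case analysis beyond the clique structure of a pendant complete block is needed.
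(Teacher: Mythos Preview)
Your overall strategy matches the paper's: apply Corollary~\ref{inhold0} at the cutpoint $v$, then factor $|M_{G-v}|$ using the component $K_{r-1}$. The computation $|M_{G-v}|=\binom{r-1}{2}\cdot|M_{G-K_r}|$ is fine.

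The verification that $\beta(v;G)=0$, however, is built on a misreading of the notation. In this paper, $M_G(S,T)$ denotes the maximal induced matchings covering \emph{no} vertex of $S$ and \emph{every} vertex of $T$; thus $M_G(\emptyset,\{v\})$ consists of maximal induced matchings that \emph{do} cover $v$, not those that avoid it. Consequently $\beta(v;G)$ counts those $M\in M_G(\emptyset,\{v\})$ for which $M$ minus its edge at $v$ is still maximal in $G-v$. Your argument instead analyzes matchings that avoid $v$ and then declares ``deleting the edge containing $v$ is vacuous'', which is the wrong object entirely; the subsequent reasoning (e.g.\ ``then $M$ covers no vertex of $N_G[v]$ at all'') does not hold up.

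The correct check, as in the paper, is a one-liner once the notation is read correctly. Let $M\in M_G(\emptyset,\{v\})$ and let $vp$ be its edge at $v$. Since every vertex of $K_r$ lies in $N_G[v]$ and $M$ is induced, $M$ covers no vertex of $K_r$ other than $v$ (and possibly $p$ if $p\in K_r$). Hence $M':=M\setminus\{vp\}$ covers no vertex of $K_r-v$, and for any edge $xy$ of $K_{r-1}=K_r-v$ (which exists since $r\ge3$) the set $M'\cup\{xy\}$ is an induced matching of $G-v$, because $x,y$ have no neighbours outside $K_r$. Thus $M'$ is not maximal in $G-v$, so $M$ is $A$-type and $\beta(v;G)=0$.
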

\begin{proof} Since $v$ is a cutpoint, $|M_{G-v}|=|M_{K_{r-1}}|\cdot |M_{G-K_r}|={{r-1}\choose 2}\cdot |M_{G-K_r}|.$ Now we would claim that  each $M\in M_G(\emptyset,\{v\})$ is of $A$-type, and thus
$\alpha(v;G)=|M_G(\emptyset,\{v\})|$. Then the result follows by Corollary  \ref{inhold0}.
For each $M\in M_G(\emptyset,\{v\})$, denote by $M'$ the induced matching obtained from  $M$ by deleting the edge incident with $v$.
Observe that $M'$ is not a maximal induced matching  of $G-v$, since $M'\cup \{xy\}$ is an induced matching  of $G-v$ where  $xy\in E(K_r-v)$. So according to the definition, $M$ is of $A$-type.
\end{proof}

Due to Gupta, Raman, and Saurabh \cite{Gupta}, the maximum value of maximal induced matchings in  graphs of order $n$ is determined.
They show that, for any (not necessarily connected) graph $G$ of  order $n$,
$|M_G|\leq 10^{\frac{n}{5}}$
with equality if and only if $n\equiv 0~ (\bmod~5)$ and $G= \frac{n}{5}K_5$, see Proposition \ref{Gupta0}.
This result  plays a crucial role in our discussion. For convenience, set
\begin{equation}\label{(2.1)}
\begin{aligned}
 g(n):=10^{\frac{n}{5}}.
\end{aligned}
\end{equation}

 Henceforth, $f(n)$ and $g(n)$ are  functions on $n$ defined in Theorem \ref{main1} and Eq.~(\ref{(2.1)}), respectively.
 In the end of this section, we will establish a series of conclusions related to the relationship between functions $f(n)$ and $g(n)$.

\begin{lemma}\label{gong3}
Let  $p(x)=(x-1)g(n-1-x)+g(n-2-x)$, where $2\le x\le n-2$. Then $p(x)$ is a decreasing function on $x$ when $x\ge 3$, and $$p(3)=\max\{~p(x)~|~x\in \mathbb{Z^+}, 2\le x\le n-2\}.$$
\end{lemma}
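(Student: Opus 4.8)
The plan is to treat the statement as two claims: that $p(x)$ is strictly decreasing on the range $x\ge 3$ (as an integer variable), and that the maximum over $2\le x\le n-2$ is attained at $x=3$, i.e. $p(3)\ge p(2)$. First I would make the substitution $g(m)=10^{m/5}=10^{-1/5}\cdot 10^{(m-1)/5}$ explicit so that $g(n-2-x)=10^{-1/5}g(n-1-x)$, which lets me factor $p(x)=g(n-1-x)\bigl((x-1)+10^{-1/5}\bigr)$. Writing $c:=10^{-1/5}\approx 0.6310$ and $q:=10^{-1/5}$ (the ratio $g(m-1)/g(m)$), the function becomes $p(x)=\bigl((x-1)+c\bigr)\,g(n-1-x)$, and since $g(n-1-x)=g(n-2)\cdot q^{\,x-1}$ up to the fixed constant $g(n-2)$, it suffices to analyze the real function $h(x)=(x-1+c)q^{\,x-1}$ for $x\ge 2$.

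Next I would show $h$ is decreasing for $x\ge 3$ by comparing consecutive values (or differentiating): the ratio $h(x+1)/h(x)=q\cdot\frac{x+c}{x-1+c}$, and one checks this is $<1$ precisely when $q(x+c)<x-1+c$, i.e. $x(1-q)>1-c+cq-c\cdot? $ — after simplification this reads $x>\frac{1-c+c(1-q)}{1-q}$ or more cleanly $x > \frac{1-qc}{1-q}-c$; plugging $q=c=10^{-1/5}$ gives a threshold below $3$, so $h(x+1)<h(x)$ for all integers $x\ge 3$ (and in fact for all real $x$ above that threshold). Hence $p(3)>p(4)>\dots>p(n-2)$. For the remaining comparison $p(3)$ versus $p(2)$, I would just evaluate directly: $p(3)=(2+c)g(n-4)$ and $p(2)=(1+c)g(n-3)=(1+c)\cdot 10^{1/5}g(n-4)$, so $p(3)\ge p(2)$ is equivalent to $2+c\ge (1+c)\cdot 10^{1/5}$, a single numerical inequality in the constant $10^{1/5}\approx 1.5849$: the right side is about $(1.6310)(1.5849)\approx 2.585$ while the left is about $2.631$, so the inequality holds. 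This finishes both parts.

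The routine part is the arithmetic with the constant $10^{\pm 1/5}$; the only place requiring a little care is making sure the monotonicity threshold genuinely falls below $3$ so that the integer $x=3$ is already in the decreasing regime — I expect this to be the main (minor) obstacle, since a careless simplification could misplace the threshold near $3$. I would therefore carry out the ratio computation $h(x+1)/h(x)$ symbolically, reduce the inequality $h(x+1)<h(x)$ to a clean linear inequality in $x$, and then substitute the numerical value of $10^{-1/5}$ only at the last step to confirm the threshold is strictly less than $3$. With that pinned down, the chain $p(3)>p(4)>\cdots$ together with the direct check $p(3)\ge p(2)$ yields the claim.
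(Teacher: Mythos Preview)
Your proposal is correct and follows essentially the same strategy as the paper: establish that $p$ is decreasing for $x\ge 3$ and then check $p(3)>p(2)$ directly. The only difference is cosmetic: the paper treats $p$ as a continuous function and shows $p'(x)<0$ for $x\ge 3$, whereas you compare consecutive integer values via the ratio $h(x+1)/h(x)$; both reduce to the same numerical check with $10^{-1/5}$, and your threshold computation (which lands near $2.08$) confirms the decreasing regime starts before $x=3$, so the argument goes through.
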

\begin{proof} Taking derivative to the  function $p(x)$,  we have
$$p'(x)=10^{\frac{n-x-2}{5}}\cdot(10^{\frac{1}{5}}-\frac{(x-1)\cdot 10^{\frac{1}{5}}\cdot \ln 10}{5}-\frac{\ln 10}{5}).$$
In view of $p'(x)<0$ for $x\ge 3$, then $p(x)$ is  decrease on $x\ge 3$. Thus the result follows by the fact that $p(3)>p(2)$.
\end{proof}

For convenience, when $n\geq14$, the piecewise function $f(n)$ can be written as $10^{\frac{n-1}{5}}+f_1(n)$, where
\begin{equation}\label{eq}
\begin{aligned}
 f_1(n)=
\begin{cases}
  \frac{n+144}{30}\cdot6^{\frac{n-6}{5}}&~ $if$~\mbox{$14\leq n\leq30$};\\
  \frac{n-1}{5}\cdot 6^{\frac{n-6}{5}}&~ $if$~\mbox{$n\geq31$}.
\end{cases}
\end{aligned}
\end{equation}
It is easy to check that,  for  $n\geq14$, $f_1(n)$ satisfies:\\
{\em $($$1$$)$}.	$f_1(n)$ is monotonic increase with respect to  $n$;\\
{\em $($$2$$)$}.  $f_1(n-1)+sf_1(n-t)<f_1(n)$, where $s=1$ and $t\geq4$ or $s=2$ and $t\geq6$ or $s=3$ and $t\geq7$.

In our proof, we need bound the upper bound on $|M_G|$  in terms of  $\frac{f(n)}{g(n)}$ with different forms and so  we   give a list of inequalities between $f(n)$ and $g(n)$ as follows which will be frequently used.
\begin{lemma}\label{botool1}  Let  $n$ be a positive integer and  $f(n)$ and $g(n)$ be two functions defined as above. Then\\
{\em (1)}.	$\frac{f(n)}{g(n)}\le \frac{f(4)}{g(4)}< 0.9510$ if $n\neq 5$;\\
{\em (2)}.  $\frac{f(n)}{g(n)}\le \frac{f(6)}{g(6)}< 0.9465$ if $n\notin \{4,5\}$;\\
{\em (3)}.  $\frac{f(n)}{g(n)}\le \frac{f(10)}{g(10)}\leq 0.8500$ if $n\geq 7$;\\
{\em (4)}. $\frac{f(n)}{g(n)}\le \frac{f(14)}{g(14)}< 0.7778$ if $n\geq 13$.
\end{lemma}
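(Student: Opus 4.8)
The statement to establish is Lemma \ref{botool1}, a list of four numerical inequalities comparing $f(n)$ with $g(n)=10^{n/5}$ on various ranges of $n$. The overall strategy is to split each inequality according to the four pieces in the definition of $f(n)$, verify the claimed maximum is attained at the stated small value of $n$ by direct computation, and then show the ratio $f(n)/g(n)$ is eventually decreasing on the unbounded piece so that only finitely many values need be checked. I would organize the argument by first proving a single auxiliary monotonicity fact: for $n\ge 31$ we have
\[
\frac{f(n+1)}{g(n+1)}=\frac{10^{n/5}+\tfrac{n}{5}\,6^{(n-5)/5}}{10^{(n+1)/5}}
=10^{-1/5}\left(1+\frac{n}{5}\Bigl(\tfrac{6}{10}\Bigr)^{n/5}6^{-1}\right),
\]
and a parallel expression for $f(n)/g(n)$; since $10^{-1/5}<1$ and the correction term $\tfrac{n}{5}(6/10)^{n/5}6^{-1}\to 0$, one checks that $f(n)/g(n)$ is strictly decreasing for $n\ge 31$ (the derivative of $x\mapsto x(6/10)^{x}$ is negative once $x>1/\ln(10/6)$, which holds well before $n/5=31/5$). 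A similar but shorter computation handles the middle pieces $9\le n\le 13$ and $14\le n\le 30$, where $f_1(n)$ is known from the remarks above to be increasing but $g(n)$ grows faster.

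With that monotonicity in hand, each of the four parts reduces to: (i) evaluate $f$ and $g$ at the indicated base point ($n=4,6,10,14$ respectively), (ii) evaluate at the handful of exceptional small $n$ that are not covered by the relevant monotone piece (e.g. for part (1), $n\in\{1,2,3,4,6,7,8\}$ using $f(n)=\binom n2$, then $n=9,\dots,13$, then $n=14,\dots,30$, then invoke the $n\ge 31$ monotonicity with a single check at $n=31$), and (iii) conclude the supremum over the stated range equals the value at the base point. For part (1), $f(4)/g(4)=\binom42/10^{4/5}=6/10^{0.8}$, and one computes $10^{0.8}\approx 6.3096$, giving $\approx 0.9509<0.9510$. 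For part (2), $f(6)/g(6)=\binom62/10^{6/5}=15/10^{1.2}\approx 15/15.849\approx 0.9464<0.9465$. For part (3), $f(10)/g(10)=\bigl(\binom52\binom52-4\cdot4+1\bigr)/10^{2}=(100-16+1)/100=85/100=0.85$. For part (4), $f(14)/g(14)=(10^{13/5}+\tfrac{14+144}{30}6^{8/5})/10^{14/5}$; here $10^{13/5}=10^{2.6}\approx 398.11$, $6^{8/5}=6^{1.6}\approx 19.65$, so the numerator is $\approx 398.11+(158/30)(19.65)\approx 398.11+103.49\approx 501.6$, and $10^{14/5}=10^{2.8}\approx 630.96$, yielding $\approx 0.7950$ — wait, this exceeds $0.7778$, so I would double-check the arithmetic; more carefully $6^{1.6}=e^{1.6\ln 6}=e^{2.867}\approx 17.58$, giving numerator $\approx 398.11+5.267\cdot 17.58\approx 398.11+92.6\approx 490.7$ and ratio $\approx 0.7777<0.7778$, confirming the bound. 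This illustrates that the main care needed is in the logarithmic estimates for the middle-range base points.

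The main obstacle, then, is not conceptual but computational bookkeeping: one must be certain that on each stated range the claimed base point genuinely dominates, which requires (a) a clean proof that $f(n)/g(n)$ is decreasing on the tail $n\ge 31$ — handled by the elementary calculus fact above applied to $x\mapsto x(0.6)^{x}$ — and (b) a proof that on $14\le n\le 30$ the ratio is also decreasing (or at least that its maximum there is at $n=14$), which follows because $f_1(n)=\tfrac{n+144}{30}6^{(n-6)/5}$ multiplied by $g(n)^{-1}=10^{-n/5}$ is a product of an increasing term growing like $(6/10)^{n/5}$ times a linear factor, again eventually decreasing, with the single worst case at the left endpoint $n=14$ checked by hand. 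The pieces $1\le n\le 8$ and $9\le n\le 13$ contain only finitely many integers, so there the verification is purely a finite check. I would present parts (1)–(4) in sequence, in each case stating "by the monotonicity of $f(n)/g(n)$ on $n\ge 31$ and a direct check for the finitely many smaller $n$ in the range, the maximum is attained at $n=\cdot$", and then exhibiting the decimal bound. Throughout I would reuse Lemma \ref{gong3} and the two monotonicity properties of $f_1$ recorded after equation (\ref{eq}) to avoid re-deriving growth comparisons, so that the proof is a short sequence of numerical confirmations rather than fresh analysis.
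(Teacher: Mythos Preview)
Your proposal is correct and takes essentially the same approach as the paper: verify the finitely many small cases $n\le 13$ by direct computation, then show that $f(n)/g(n)$ is monotonically decreasing for $n\ge 14$ via calculus on the continuous extensions of the two pieces. The paper presents this slightly more cleanly by computing the derivatives $h_1'(x)$ and $h_2'(x)$ explicitly and noting $h_1(30)=h_2(30)$ so the pieces glue, whereas you reach the same conclusion through the equivalent observation that $x\mapsto x(0.6)^{x}$ is eventually decreasing; either way, once monotonicity on $[14,\infty)$ is in hand, all four parts reduce to the finite checks and base-point evaluations you describe.
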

\begin{proof} By a direct verification, the result follows when $n\le 13$.
For $n\ge 14$, let \begin{align*}
h(n):=\frac{f(n)}{g(n)}=
\begin{cases}
10^{-\frac{1}{5}}+\frac{n+144}{5}\cdot 6^{-\frac{11}{5}}\cdot (\frac{3}{5})^{\frac{n}{5}}&~ $if$~ 14\leq n\le 30;\\
10^{-\frac{1}{5}}+\frac{n-1}{5}\cdot 6^{-\frac{6}{5}}\cdot (\frac{3}{5})^{\frac{n}{5}} & ~ $if$~ n\geq 31. \\
\end{cases}
\end{align*}
Let now $h_1(x)$ and $h_2(x)$ be two continuous functions on $x$ defined as
\begin{equation*}
\begin{aligned}
h_1(x)&=  10^{-\frac{1}{5}}+\frac{x+144}{5}\cdot 6^{-\frac{11}{5}}\cdot (\frac{3}{5})^{\frac{x}{5}}~\text{for}~14\le x\le 30;\\
h_2(x)&=  10^{-\frac{1}{5}}+\frac{x-1}{5}\cdot 6^{-\frac{6}{5}}\cdot (\frac{3}{5})^{\frac{x}{5}}~\text{for}~ x\geq 30.
\end{aligned}
\end{equation*}
Taking derivative respectively of functions $h_1(x)$ and $h_2(x)$, we have
\begin{equation*}
\begin{aligned}
h_1'(x)&=0.2\cdot 6^{-\frac{11}{5}}\cdot 0.6^\frac{x}{5}(1+0.2\cdot(x+144)\cdot\ln(0.6))<0~\text{for}~14\le x\le 30;\\
h_2'(x)&=0.2\cdot 6^{-\frac{6}{5}}\cdot 0.6^\frac{x}{5}(1+0.2\cdot(x-1)\cdot\ln(0.6))<0~\text{for}~ x\geq 30,
\end{aligned}
\end{equation*} which implies that both of functions $h_1(x)$ and $h_2(x)$ are  monotonic decrease.
Moreover, we find that $h_1(30)=h_2(30)$. Thus, for $x\ge 14$,
$$\frac{f(x)}{g(x)}\le \frac{f(14)}{g(14)}< 0.7778.$$
Thus, the proof is complete.
\end{proof}

\begin{lemma} \label{bolemma0011} Let $n$ be an integer with $n\ge 14.$ Then\\
{\em (1).} when $r\in \{4,5,6\}$, ${{r-1}\choose 2}\cdot f(n-r)+(r-1)\cdot f(n-r-1)+g(n-r-2)<f(n)$;\\
{\em  (2).} when $r\in \{4,5,6\}$, ${{r-1}\choose 2}\cdot f(n-r)+r\cdot g(n-r-2)+g(n-r-3)<f(n)$;\\
{\em  (3).} when $r\in \{4,6\}$,  ${{r-1}\choose 2}\cdot f(n-r)+(r-1)\cdot g(n-r-1)+6^{\frac{n-r-1}{5}}<f(n)$;\\
{\em  (4).} when $r\in \{4,6\}$,  ${{r-1}\choose 2}\cdot f(n-r)+\frac{f(4)}{g(4)}\cdot (r-1)\cdot g(n-r-1)+g(n-r-3)<f(n)$.
\end{lemma}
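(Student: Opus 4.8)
\textbf{Proof proposal for Lemma~\ref{bolemma0011}.}
The plan is to treat all four inequalities by the same two-step recipe: first replace every occurrence of $f(\cdot)$ and $g(\cdot)$ by closed-form exponential expressions, then factor out the dominant term $10^{\frac{n-1}{5}}$ (equivalently, divide both sides by $g(n)$ and use the functions $h, h_1, h_2$ from Lemma~\ref{botool1}). Since all four statements are asserted only for $n\ge 14$, and the left-hand sides involve $f$ evaluated at $n-r$ with $r\in\{4,5,6\}$, the arguments $n-r$ may drop below $14$; so the first thing I would do is split the verification into a ``small'' range (roughly $14\le n\le 20$ or so, where $n-r$ can land in the piecewise branches $\le 13$ or in $14\le m\le 30$) checked by direct numerical evaluation using the explicit formula for $f$, and a ``large'' range $n$ large enough that every argument falls in the regime $m\ge 31$ (or at least $m\ge 14$), where $f(m)=10^{\frac{m-1}{5}}+f_1(m)$ with $f_1$ a clean monomial in $6^{m/5}$.

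For the large range I would argue as follows. Write $f(n)=10^{\frac{n-1}{5}}+f_1(n)$ and $g(m)=10^{m/5}$. The leading behaviour of the left-hand side of each inequality is governed by $\binom{r-1}{2} 10^{\frac{n-r-1}{5}}$, which is $\binom{r-1}{2}\cdot 10^{-\frac{r}{5}}\cdot 10^{\frac{n-1}{5}}$; for $r=4,5,6$ the constant $\binom{r-1}{2}10^{-r/5}$ equals $3\cdot10^{-4/5}$, $6\cdot10^{-1}$, $10\cdot10^{-6/5}$ respectively, and each of these is strictly less than $1$ (they are about $0.475$, $0.6$, $0.631$). So the $10^{\frac{n-1}{5}}$-coefficient on the left is a constant $c_r<1$, while on the right it is exactly $1$; the remaining terms on both sides are $O(6^{n/5})$, which is exponentially smaller than $10^{n/5}$. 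Hence for all sufficiently large $n$ the inequality holds with room to spare, and the point is to make ``sufficiently large'' explicit. Concretely I would divide through by $g(n)=10^{n/5}$: the left side becomes $c_r + (\text{terms})\cdot(3/5)^{n/5}\cdot(\text{const})$ and the right side becomes $h(n)=10^{-1/5}+f_1(n)/g(n)$; since $10^{-1/5}\approx 0.8913 > c_r$ is false in general I instead keep the $f_1(n)/g(n)$ term, which is positive, and bound the left-side correction terms using $f(m)/g(m)\le 0.7778$ (Lemma~\ref{botool1}(4)) and $g(m)/g(n)=10^{(m-n)/5}$, plus property~(2) of $f_1$ from Eq.~(\ref{eq}) — namely $f_1(n-1)+sf_1(n-t)<f_1(n)$ for the relevant $(s,t)$ — to absorb the $f_1$-order terms on the left into $f_1(n)$ on the right.

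The cleanest organization, and the one I expect to use, is: for each of (1)--(4), bound each $f(n-r)$-type term on the left by $\frac{f(n-r)}{g(n-r)}g(n-r)\le 0.7778\cdot g(n-r)$ when convenient, or keep it as $f(n-r)$ and split off its $f_1$ part; collect all the pure $g$-contributions and check that $\binom{r-1}{2}g(n-r)+(\text{the other }g\text{ terms})\le 10^{\frac{n-1}{5}}$ — this reduces to a finite check of the constants $c_r$ plus small additive powers of $6$ and $10$ — and separately check that the leftover $f_1$-contributions are dominated by $f_1(n)$ via property~(2) of Eq.~(\ref{eq}). For (3) the extra term $6^{\frac{n-r-1}{5}}$ and for (4) the extra factor $\frac{f(4)}{g(4)}<0.9510$ are handled the same way, since $6^{\frac{n-r-1}{5}}/g(n)$ is again an exponentially small correction and the $f(4)/g(4)$ factor only improves the inequality.

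The main obstacle I anticipate is not the large-$n$ asymptotics, which are comfortable, but the boundary range $14\le n\le 20$ together with $r=6$: there $n-r$ can be as small as $8$, so $f(n-r)$ is computed from the $\binom{n}{2}$ branch or the $9\le n\le 13$ branch rather than from the exponential branch, and the neat ``factor out $10^{(n-1)/5}$'' argument does not apply verbatim. For those finitely many pairs $(n,r)$ I would simply tabulate $f$ and $g$ explicitly from their definitions and verify each inequality by direct computation; the slight annoyance is that one must be careful which piece of the piecewise definition of $f(n-r)$, $f(n-r-1)$, etc., is in force, but since it is a bounded finite list this is routine. A secondary point of care is that in (1) and (2) the parameter $r$ ranges over $\{4,5,6\}$ while in (3) and (4) only over $\{4,6\}$, and the constants $c_r$ must be checked for exactly the stated $r$; I would present a short table of $\binom{r-1}{2}10^{-r/5}$ and the auxiliary constants to make the finite check transparent.
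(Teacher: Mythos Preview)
Your overall strategy---finite direct check at small $n$, then factor out $10^{(n-1)/5}$ and compare constants for large $n$---is exactly what the paper does. But your plan has a real gap at $r=6$, and it is not confined to the boundary range you flagged.

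You offer two ways to treat $\binom{r-1}{2}f(n-r)$: either bound it by $0.7778\,\binom{r-1}{2}g(n-r)$ via Lemma~\ref{botool1}(4), or split it as $\binom{r-1}{2}\,10^{(n-r-1)/5}+\binom{r-1}{2}f_1(n-r)$ and handle the $f_1$-piece ``separately'' by property~(2) of Eq.~(\ref{eq}). For $r=6$ \emph{neither} closes. If you use the $0.7778$ bound on $f(n-6)$ (and on $f(n-7)$ in (1)), the $10^{(n-1)/5}$-coefficient in each of (1)--(4) exceeds~$1$; for instance in (1) it is $10\cdot 0.7778\cdot 10^{-1}+5\cdot 0.7778\cdot 10^{-6/5}+10^{-7/5}\approx 1.063$. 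If instead you split, you must show $10\,f_1(n-6)\le f_1(n)$, but $f_1(n)/f_1(n-6)\to 6^{6/5}\approx 8.586<10$, so this is false for all large $n$; property~(2) of Eq.~(\ref{eq}) only covers coefficients $s\le 3$ and does not help here. So your ``separate'' $f_1$ check cannot succeed, and your claim that the large-$n$ asymptotics are comfortable for all $r$ is not justified by the argument you describe.

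The paper's fix is simple but essential: it treats $r=6$ as its own case, enlarges the finite-check range to $n\le 37$, and for $n\ge 38$ invokes the sharper ratio $f(m)/g(m)\le f(31)/g(31)<0.6604$ (available since then $n-6,n-7\ge 31$, and $h(n)=f(n)/g(n)$ is decreasing on $n\ge 14$ by the proof of Lemma~\ref{botool1}). With $0.6604$ in place of $0.7778$, bounding both $f$-terms by $0.6604\cdot g$ drives the $10^{(n-1)/5}$-coefficient below $1$ (about $0.909$ in (1)), and no separate $f_1$-comparison is needed at all. Your proposed boundary range $14\le n\le 20$ is therefore too short; for $r=6$ you must tabulate up through $n=37$, or else supply a sharper ratio than $0.7778$.
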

\begin{proof}
$(1)$. We divide our proof into the following two cases:\\
{\bf Case 1. $r\in \{4,5\}$.}

For $n-r< 14$, the result follows by a direct calculation.
 When  $n-r\geq 14$, $\frac{f(n-r-1)}{g(n-r-1)}\leq \frac{f(14)}{g(14)}< 0.7778$ by    Lemma \ref{botool1} and thus $f(n-r-1)< 0.7778g(n-r-1)$.
 Then we only need to show that ${{r-1}\choose 2}\cdot f(n-r)+0.7778\cdot(r-1)\cdot g(n-r-1)+g(n-r-2)<f(n)$.
 By Eq.~(\ref{eq}),  the coefficient of $10^{\frac{n-1}{5}}$ in the left-hand side  is ${{r-1}\choose 2}\cdot 10^{-\frac{r}{5}}+(r-1)\cdot 0.7778\cdot 10^{-\frac{r}{5}}+10^{-\frac{r+1}{5}}<1$ for  each $r.$
Besides, it is easy to check that
${{r-1}\choose 2}\cdot f_1(n-r)\leq f_1(n)$ when   $n-r\geq 14$.

\noindent {\bf Case 2.} $r=6$.

For $n< 38$, the result follows by a direct calculation.
When  $n\geq 38$, $\frac{f(n-r-1)}{g(n-r-1)}\leq \frac{f(31)}{g(31)}< 0.6604$ by  Lemma \ref{botool1}, i.e. $f(n-r-1)< 0.6604g(n-r-1)$. Thus,
\begin{equation*}
\begin{aligned}
&{{r-1}\choose 2}\cdot f(n-r)+(r-1)\cdot f(n-r-1)+g(n-r-2)\\
<~&{{r-1}\choose 2}\cdot0.6604\cdot g(n-r)+(r-1)\cdot0.6604\cdot g(n-r-1)+g(n-r-2)\\
=~ &(10\cdot 0.6604\cdot 10^{-\frac{5}{5}}+5\cdot 0.6604\cdot 10^{-\frac{6}{5}}+10^{-\frac{7}{5}})\cdot 10^{\frac{n-1}{5}}<0.9086\cdot 10^{\frac{n-1}{5}}<f(n).
\end{aligned}
\end{equation*}

%
%
%
%
%

 (2)-(4). The proofs are  parallel to (1),  we omit the detail.
\end{proof}

\section{Proof of   Theorem \ref{main1}}
In this section, we will complete the proof of  Theorem \ref{main1}.
 Assume to the contrary  that
 Theorem \ref{main1} is false, then  there exists a connected graph $G$ of order $n$ such that $|M_G|> f(n),$ which is called a {\it counterexample}.
We say a counterexample $G$ to be {\it minimal} if
for every counterexample $G'$ either $|V(G')|>|V(G)|$ or $|V(G')| = |V(G)|$  and $\tau(G')\geq \tau(G)$. 

To prove Theorem \ref{main1}, we below focus on establishing several incompatible properties for the minimal counterexample.
Let $G$ be a minimal counterexample and $u$ be any vertex of $G$. Then\\
{\bf Property A.} $1<d(u)<n-1$;\\
{\bf Property B.} if $d(u)\ge 6$, then $u$ is a cutpoint;\\
{\bf Property C.} $G$ does not have the  pendant   blocks  $K_3, K_4, K_5, K_6$;\\
{\bf Property D.} $G$ has no non-cutpoints.

We begin with the following result, which shows that the order of each minimal counterexample is at least $ 15$.
Due to Tu et al.,  a sharp upper bound  on the number of maximal dissociation sets in a general graph of order $n$ is given as follows.
\begin{proposition}{\em \cite[Theorem 5.1]{Tu}}\label{01}
Let $n$ be an integer with $n\ge 8$ and let $G$ be a (not necessarily connected)  graph of order $n$.  Then
\begin{align*}
|MD_G|\le
\begin{cases}
10^{t} & ~  {\rm if}~ n=5t; \\
15\cdot 10^{t-1}& ~  {\rm if}~ n=5t+1; \\
225\cdot 10^{t-2}&  ~  {\rm if}~ n=5t+2; \\
36\cdot 10^{t-1}&  ~  {\rm if}~ n=5t+3; \\
6\cdot 10^{t}&  ~  {\rm if}~ n=5t+4, \\
\end{cases}
\end{align*}
where $t$ is a positive integer, and equality holds iff
\begin{align*}
G=
\begin{cases}
tK^*_5 & ~  {\rm if}~ n=5t; \\
K^*_6\cup (t-1)K^*_5 & ~{\rm if}~ n=5t+1; \\
2K^*_6\cup (t-2)K^*_5 &  ~ {\rm if}~ n=5t+2; \\
2K^*_4\cup (t-1)K^*_5&  ~ {\rm if}~ n=5t+3; \\
K^*_4\cup tK^*_5&   ~{\rm if}~ n=5t+4, \\
\end{cases}
\end{align*}
where $K_m^*$ is obtained from the complete graph $K_m$ by possibly deleting $0\le i\le m/2$ disjoint edges.
\end{proposition}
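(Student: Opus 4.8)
The statement to prove is Proposition~\ref{01}, but as phrased in the excerpt it is imported verbatim from Tu et al.~\cite[Theorem~5.1]{Tu}; since the present paper only quotes it, my task is really to reconstruct how its proof would go, so that it can be invoked safely. The plan is to argue by induction on $n$, using a vertex-deletion recursion for maximal dissociation sets that mirrors Proposition~\ref{bopro1} for induced matchings. The key combinatorial identity is that for any vertex $v$, a maximal dissociation set either avoids $v$ (and its trace in $G-v$ controls the count) or contains $v$; in the latter case $v$ has at most one neighbour inside the set, so the set is determined by a choice of $v$'s "partner" $p\in N(v)\cup\{v\}$ together with a maximal dissociation set of an induced subgraph obtained by removing $N[v]$ (or $N[v]\cup N[p]$) from $G$. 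This yields a bound of the shape $|MD_G|\le |MD_{G-v}| + \sum_{p}|MD_{G-X_p}|$ where each $X_p$ has size at least $2$ (often more), so that the recursion contracts fast enough.

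First I would dispose of the base cases $8\le n\le 12$ (or whatever small range is needed to seed the induction) by direct case analysis, which is where the irregular coefficients $15,225,36,6$ come from — these reflect the optimal "mixed" configurations $K_6^\ast$, $2K_6^\ast$, $2K_4^\ast$, $K_4^\ast$ used to pad $n$ up to a multiple of $5$. Then for the inductive step I would pick a vertex $v$ of minimum degree and split into cases on $d(v)$: if $d(v)$ is large the neighbourhoods removed are large and the recursion is slack; the tight case is $d(v)$ small, where one must be careful that the "partner" subgraphs are genuinely disjoint from enough vertices. The standard trick is to choose $v$ inside a smallest block or a vertex maximizing some local parameter so that $G-N[v]$ loses at least five vertices whenever equality is approached, forcing the extremal $G$ to decompose as a disjoint union of $K_5^\ast$'s plus one small exceptional piece. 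One then checks that among all ways to fill the residue class mod $5$, the claimed $K_m^\ast$ combinations maximize the product, using the elementary inequalities $6\cdot 6 < 36$ is false while $6 < 10^{4/5}\cdot(\text{something})$ — i.e., comparing $|MD_{K_4^\ast}|=6$, $|MD_{K_6^\ast}|=15$, $|MD_{K_5^\ast}|=10$ against alternatives like two $K_3^\ast$'s.

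The main obstacle, as usual in these extremal counting arguments, is the equality characterization rather than the bound itself: one must show that \emph{only} the listed graphs attain the maximum. This requires tracking when each inequality in the recursion is tight — every choice of $v$ must split $G$ into components each of which is itself extremal, and the "twin/symmetrization" lemmas (the analogues of Lemmas~\ref{y1}, \ref{y-y1}, \ref{twins} here) are used to push a hypothetical extremal graph toward a disjoint union of complete-graph-minus-a-matching blocks without decreasing $|MD_G|$. Handling the overlap between the several partner subgraphs $G-N[v]\cup N[p]$ cleanly — so that one does not double-count and so that the size lower bounds on the removed sets are valid simultaneously — is the delicate bookkeeping step. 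Once the structure is pinned down to $\bigcup K_{m_i}^\ast$, the remaining work is the finite optimization over the multiset $\{m_i\}$ with $\sum m_i = n$, which is routine. Since in the present paper Proposition~\ref{01} is only \emph{cited}, I would not reproduce this argument in full; I would simply note that it is \cite[Theorem~5.1]{Tu} and proceed.
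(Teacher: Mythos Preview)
Your proposal is correct and matches the paper exactly: Proposition~\ref{01} is stated without proof in the paper, carrying only the citation \cite[Theorem~5.1]{Tu}, and your final conclusion---to simply invoke that reference and proceed---is precisely what the paper does. The proof sketch you supply is a reasonable reconstruction of the Tu et al.\ argument but is not needed here.
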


As mentioned earlier, each maximal induced matching is a special kind of dissociation set. Moreover, one finds that, for the complete graph $K_m$,
each maximal induced matching is a maximal dissociation set, and vice versa. Therefore, we have
\begin{theorem}\label{TUGONG}
Let $n$ be an integer with $n\ge 8$ and $G$ be an arbitrary graph  in ${\mathcal G}_n$. Then
\begin{align*}
|M_G|\le q(n):=
\begin{cases}
10^{t} & ~ {\rm if}~ n=5t; \\
15\cdot 10^{t-1}& ~ {\rm if}~ n=5t+1; \\
225\cdot 10^{t-2}&  ~ {\rm if}~ n=5t+2; \\
36\cdot 10^{t-1}&  ~ {\rm if}~ n=5t+3; \\
6\cdot 10^{t}&  ~  {\rm if}~ n=5t+4, \\
\end{cases}
\end{align*}
with equality   iff
\begin{align*}
G=Q_n:=
\begin{cases}
tK_5 & ~ {\rm if}~ n=5t; \\
K_6\cup (t-1)K_5 & ~  {\rm if}~ n=5t+1; \\
2K_6\cup (t-2)K_5 &  ~ {\rm if}~ n=5t+2; \\
2K_4\cup (t-1)K_5&  ~ {\rm if}~ n=5t+3; \\
K_4\cup tK_5&  ~  {\rm if}~ n=5t+4, \\
\end{cases}
\end{align*}where $t$ is a positive integer.
\end{theorem}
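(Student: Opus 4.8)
The plan is to deduce Theorem \ref{TUGONG} directly from Proposition \ref{01} by exploiting the observation that on a complete graph, the notions of maximal induced matching and maximal dissociation set coincide. First I would fix an arbitrary graph $G\in{\mathcal G}_n$ and observe that every maximal induced matching $M$ of $G$ is in particular a dissociation set of $G$ (its vertex set induces a $1$-regular, hence degree-$\le 1$, subgraph). However, such an $M$ need not be a \emph{maximal} dissociation set of $G$, so the inequality $|M_G|\le|MD_G|$ is not immediate for general $G$; this is the gap that must be handled, and it is the main obstacle. The remedy is to map each maximal induced matching to a maximal dissociation set injectively: given $M\in M_G$, extend $V(M)$ to a maximal dissociation set $D(M)$ of $G$ by greedily adding vertices, but do so in a controlled way so that distinct $M$'s land in distinct $D(M)$'s. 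A cleaner route is to argue componentwise and reduce to the case where the extremal structure is a disjoint union of (edge-deleted) complete graphs, where the correspondence is transparent.

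Concretely, the key steps, in order, are: (i) recall from Proposition \ref{01} that $|MD_G|\le q(n)$ with the stated extremal graphs $K_m^*$ — disjoint unions of complete graphs with some disjoint edges possibly removed; (ii) show $|M_G|\le|MD_G|$ by the injection $M\mapsto D(M)$, where $D(M)$ is obtained from $V(M)$ by a fixed greedy completion rule (e.g. process the vertices of $G$ in a fixed linear order and add each vertex that keeps the set a dissociation set); one checks $D(M)\ne D(M')$ for $M\ne M'$ because $V(M)$ and $V(M')$ differ and the greedy rule only adds vertices, never removes, so the symmetric difference is witnessed inside the matched parts — more carefully, one recovers $M$ from $D(M)$ by taking the edges of $G[D(M)]$ together with the information of which isolated-in-$G[D(M)]$ vertices were forced, which requires a short argument; (iii) combine to get $|M_G|\le q(n)$ for all $G\in{\mathcal G}_n$ with $n\ge 8$; (iv) analyze equality: if $|M_G|=q(n)$ then $|MD_G|=q(n)$, so $G$ is one of the extremal graphs $K_m^*$ of Proposition \ref{01}; then observe that deleting even one edge from a component $K_m$ strictly decreases the number of maximal induced matchings (an induced matching cannot use a non-edge, and the count $\binom{m}{2}$ on $K_m$ drops), so every component must be a full complete graph, forcing $G=Q_n$; (v) conversely verify $|M_{Q_n}|=q(n)$ by the product formula $|M_{K_{a}\cup K_{b}}|=|M_{K_a}|\cdot|M_{K_b}|$ together with $|M_{K_m}|=\binom{m}{2}$ for $m\ge 2$ and $|M_{K_1}|=1$, checking each residue class of $n$ mod $5$.

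I expect step (ii) — upgrading $|M_G|\le|MD_G|$ from the obvious $V(M)\in$ dissociation sets to a genuine injection into \emph{maximal} dissociation sets — to be the delicate point, since a naive "extend to a maximal dissociation set" map is not obviously injective. The safest formulation is: since both quantities are maximized over all of ${\mathcal G}_n$, and the extremal graphs for $|MD_G|$ are disjoint unions of $K_m^*$'s, it suffices to (a) prove the bound for these candidate extremal graphs directly (where $|M_{K_m^*}|\le|M_{K_m}|$ and the arithmetic is explicit), and (b) prove the bound for all other graphs via $|M_G|\le|MD_G|<q(n)$, which only needs the crude inequality $|M_G|\le|MD_G|$ in the strict regime — and there the injection can be taken to be any fixed extension rule since we do not need it to be surjective. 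Once the componentwise product formula and $|M_{K_m}|=\binom m2$ are in hand, the equality discussion and the final case-check over $n\equiv 0,1,2,3,4\pmod 5$ are routine arithmetic matching $\binom 52=10$, $\binom 62=15$, $\binom 42=6$, $\binom 62^2=225$, and $\binom 62^2\cdot$ adjustments against $q(n)$. I would close by noting the parallel with how Proposition \ref{Gupta0} is used, so that Theorem \ref{TUGONG} slots in as the base-case lower bound $n\ge 15$ for the minimal counterexample in the main proof.
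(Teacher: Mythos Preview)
Your approach is the paper's: deduce both the bound and the equality characterization from Proposition~\ref{01} via the correspondence between induced matchings and dissociation sets. The paper's own argument is in fact just the two-sentence remark preceding the theorem and does not spell out the injection you worry about in step (ii).

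That uncertainty is the only soft spot, and it dissolves cleanly. For any maximal induced matching $M$ and \emph{any} dissociation set $D\supseteq V(M)$, every vertex of $D\setminus V(M)$ is isolated in $G[D]$: it cannot be adjacent to $V(M)$ (each vertex there already has degree~$1$ in $G[D]$), and an edge inside $D\setminus V(M)$ would extend $M$, contradicting maximality. Hence $E(G[D(M)])=M$ regardless of the extension rule, so $M\mapsto D(M)$ is injective and $|M_G|\le|MD_G|$ holds for all $G$ --- no extra ``information about forced isolated vertices'' is needed. Your ``safer'' alternative route (b) is circular, since it still requires this same inequality; discard it. With $|M_G|\le|MD_G|$ in hand, steps (iii)--(v) go through exactly as you wrote, including the observation that $|M_{K_m^*}|=\binom{m}{2}-i<\binom{m}{2}=|MD_{K_m^*}|$ whenever $i\ge 1$ edges are deleted (here one should also note that deleting a matching from $K_m$, $m\ge 4$, creates no induced matchings of size~$\ge 2$, so the maximal ones remain precisely the edges).
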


Next we borrow the upper bound  $q(n)$ in Theorem \ref{TUGONG} to prove our main Theorem \ref{main1} for
$n\leq 14.$
\begin{theorem}\label{g13}  Each minimal counterexample has order at least $15$.
%
\end{theorem}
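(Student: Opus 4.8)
The plan is to show that any minimal counterexample $G$ satisfies $|V(G)|\geq 15$ by directly checking that no connected graph on $n\leq 14$ vertices can have $|M_G|>f(n)$. I would split the argument according to the ranges in the definition of $f(n)$: the range $1\le n\le 8$, the range $9\le n\le 13$, and the single value $n=14$.

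First, for $1\le n\le 8$ one has $f(n)=\binom{n}{2}$, which is exactly $|M_{K_n}|$, since in $K_n$ every single edge is a maximal induced matching and there are no larger ones. I would argue that for a connected graph on at most $8$ vertices no graph beats this. For very small $n$ this is a finite check; the slightly delicate case is $n=8$, where one must verify that $K_8$ and $K_4\ast K_4$ both attain $28$ and nothing else does — here I can invoke the Remark following Proposition~\ref{bopro1}, which already asserts $|M_{K_4\ast K_4}|=|M_{K_8}|=28$ and that every other connected graph of order $8$ has strictly fewer. So for $n\le 8$ there is no counterexample.

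Next, for $9\le n\le 13$ the target $f(n)$ is the value attained by $F_n=K_{\lfloor n/2\rfloor}\ast K_{\lceil n/2\rceil}$; one checks that the number of maximal induced matchings of this graph is $\binom{\lfloor n/2\rfloor}{2}\binom{\lceil n/2\rceil}{2}-(\lfloor n/2\rfloor-1)(\lceil n/2\rceil-1)+1$ (a matching picks one edge from each side avoiding the cut vertex — that is the product of the two binomials — but when the chosen edges are incident to the two cut-adjacency vertices one must correct, plus the single-edge matching using the cut edge itself). To bound an arbitrary connected $G$ on $9\le n\le 13$ vertices from above, I would lean on Theorem~\ref{TUGONG}: the general-graph bound $q(n)$ is $36,\,10,\,15,\,225,\,36$ for $n=9,10,11,12,13$ respectively — wait, these are the values $6\cdot 10,\ 10^2,\ 15\cdot 10,\ 225,\ 36\cdot 10$ etc., and one compares $q(n)$ against $f(n)$. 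Where $q(n)\le f(n)$ we are immediately done; where $q(n)>f(n)$ (this happens for the larger values in the range), the extremal graphs for $q(n)$ in Theorem~\ref{TUGONG} are disconnected, so a short separate argument rules out connected graphs achieving anything near $q(n)$ — e.g. using Proposition~\ref{bopro1} applied to a cut vertex, or Corollary~\ref{bocor2} for a pendant complete block, to reduce to smaller orders and induct. This case-by-case comparison for the five values $n\in\{9,\dots,13\}$ is the bulk of the work.

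Finally, for $n=14$: here $f(14)=10^{13/5}+\frac{158}{30}\cdot 6^{8/5}$, the value attained by $F_{14}=K_6\ast 2K_5$ (one maximal induced matching from each $K_5$ block and one from $K_6$, with the corrections forced by the cut structure — computed via Corollary~\ref{bocor2}). Theorem~\ref{TUGONG} gives $q(14)=6\cdot 10^{2}=600$ while $f(14)\approx 10^{2.6}+5.27\cdot 6^{1.6}\approx 398+94\approx 493$, so $q(14)>f(14)$ and the general bound alone does not suffice; again one must exclude connected graphs. For this I would use Lemma~\ref{botool1} and Lemma~\ref{bolemma0011} together with Proposition~\ref{bopro1}: pick a vertex $v$ and bound $|M_G|\le |M_{G-v}|+\sum_{p\in N(v)}|M_{G-N[v]\cup N[p]}|$, then observe each summand is at most $g$ of a small order, and the resulting inequality contradicts $|M_G|>f(14)$ unless $G$ has a very restricted structure — which one then handles by hand.

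\textbf{Main obstacle.} The genuinely laborious part is the range $9\le n\le 13$ and the value $n=14$, where the clean general-graph bound $q(n)$ of Theorem~\ref{TUGONG} exceeds $f(n)$, so one is forced into a connectivity-based case analysis (cut vertices, pendant blocks, applying Proposition~\ref{bopro1}/Corollary~\ref{bocor2} recursively) rather than a one-line estimate. Identifying and verifying $|M_{F_n}|=f(n)$ for these mid-range $n$, and checking that nothing connected does better, is the crux; the small cases $n\le 8$ are essentially bookkeeping.
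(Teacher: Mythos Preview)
Your overall plan is reasonable for $n\le 8$, but there is a genuine gap in the range $9\le n\le 14$. The two tools you name --- applying Proposition~\ref{bopro1} at a cut vertex, or Corollary~\ref{bocor2} at a pendant complete block --- both presuppose that $G$ has a cut vertex. A $2$-connected graph on, say, $11$ or $13$ vertices has neither, so your reduction ``to smaller orders and induct'' never gets started for such graphs. Since $q(n)>f(n)$ for every $n\in\{9,\ldots,14\}$ (not just ``the larger values''), Theorem~\ref{TUGONG} alone buys you nothing here, and knowing that the $q(n)$-extremal graphs are disconnected does not by itself yield a connected-graph bound below $f(n)$. For $n=14$ you invoke Lemma~\ref{bolemma0011}, but that lemma presupposes a pendant $K_r$, so it faces the same obstruction.

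The paper closes this gap with a different idea: it takes a vertex $u$ of \emph{maximum degree} $x$ (which exists regardless of cut structure), applies Proposition~\ref{bopro1} at $u$, and then shows two things --- first, that $x$ is forced into a narrow window (e.g.\ $3\le x\le n-4$ for $n=9$, tightening to $3\le x\le 5$ for $n=14$), since too-large or too-small $x$ already makes the right-hand side of~(\ref{a}) drop below $f(n)$; second, that within this window one can replace the crude $q(\cdot)$ by the sharper quantity $q(m,x)$ (the maximum of $|M_H|$ over \emph{non-regular} $H$ of order $m$ and maximum degree at most $x$), which it computes for the relevant small $(m,x)$. Splitting further on the number $t$ of neighbours of $u$ that reach $G-N[u]$ then finishes each $(n,x)$ pair. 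The maximum-degree pivot is the missing ingredient in your proposal; without it you have no handle on $2$-connected graphs in the mid range.
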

\begin{proof} 
By a direct verification, for each $n(\le 7)$, $K_n$ is the unique graph  having the maximum
number of maximal induced matchings in ${\mathcal G}_n$, and  $K_8$ and $K_4*K_4$ are the only two connected graphs having the  maximum number of maximal induced matchings in ${\mathcal G}_8$.
This implies that a minimal counterexample has  order at least $9$.

Let $G$ be a connected graph of order  $n(9\le n\le 14)$.
Let $u$ be a vertex with maximum degree of  $G$ and set $d(u)=x$.
Suppose that there are $t$ vertices in $N(u)$, denoted by $v_1,\ldots,v_t$, adjacent to $G-N[u]$. Then $|N[u]\backslash \{v_i|i=1,\ldots,t\}|=x+1-t$. Thus, by Proposition \ref{bopro1},
\begin{equation}\label{a}
  \begin{aligned}
  |M_{G}| & \leq| M_{G-u}| +(x-t)| M_{G-N[u]}|+\sum_{i=1}^t| M_{G-N[u]\cup N[v_i]}|\\
 &= | M_{G-u}| +x| M_{G-N[u]}|-\sum_{i=1}^t(| M_{G-N[u]}|-| M_{G-N[u]\cup N[v_i]}|).
  \end{aligned}
\end{equation}


We first claim that $x\le n-4$ if $n\ge 9$; $x\le n-5$ if  $n\ge 10$; $x\le n-6$ if $n\ge 12$ and $x\le n-9$ if $n= 14$.
(We only give the proof for $x\le n-4$, since the others are analogous and we omit the detail.)
Otherwise, assume that $x\ge  n-3$, then $|G-N[u]\cup N[v_i]| \le n-(x+1)$ if $N[v_i]\subset N[u]$ and $|G-N[u]\cup N[v_i]| \le n-(x+2)$ otherwise.
Thus $| M_{G} |\le q(n-1)+(x-t)q(n-x-1)+tq(n-x-2)$
 by Ineq.~(\ref{a}).
 However, by a direct verification,  $q(n-1)+(x-t)q(n-x-1)+tq(n-x-2)<f(n)$ holds for each pair of $(n,x)$ with $9\le n\le 14$ and $x\ge n-3$, a contradiction.

 We then claim that $x\ge 3$. Otherwise
  $G$ is a path $P_n$ or a cycle $C_n$, and $|M_G|<f(n)$ obviously.
By those two claims above, if $G$ is a counterexample, then $x$ satisfies: $x\in\{3,4,5\}$ if $n\in \{9,10,14\}$; $x\in\{3,4,5,6\}$ if $n\in \{11,12\}$ and $x\in\{3,4,5,6,7\}$ if $n=13.$
Correspondingly, $|G-N[u]|\in \{3,4,5\}$ if $n=9$;  $|G-N[u]|\in \{4,5,6\}$ if $n=10$; $|G-N[u]|\in \{4,5,6,7\}$ if $n=11$; $|G-N[u]|\in \{5,6,7,8\}$ if $n=12$;
 $|G-N[u]|\in \{5,6,7,8,9\}$ if $n=13$ and $|G-N[u]|\in \{8,9,10\}$ if $n=14$.

For convenience, let $q(n,x)$
 denote the maximum number of maximal induced matchings among all non-regular graphs in ${\mathcal G}_n$ having maximum degree at most $x$.
By a direct calculation, we have $q(3,3)=3$, $q(4,3)=5$, $q(5,3)=7, q(6,3)=8$, $q(7,3)=15$, $q(8,3)=25$, $q(5,4)=9$, $q(6,4)=11$ and $q(7,4)=13$.
If $t\geq 2$, i.e. there are at least two neighbors of $u$ adjacent to $G-N[u]$, then Ineq.~(\ref{a}) yields $|M_G|\leq q(n-1,x)+(x-2)q(n-x-1,x)+2q(n-x-2,x)$.
For $n\notin \{9,14\}$ with $x\neq 4$, it can be directly calculated that $|M_G|\leq f(n)$  for  each pair of $(n,x)$ listed above. For $n\in \{9,14\}$ and $x=4$, we get that that $|M_G|\leq f(n)$ by analyzing the specific graphic structure.
When  $t=1,$  combining the obtained $q(n,x)$ and a careful analysis of the
possible graphic structure, we obtain that  $|M_G|\leq f(n).$
Consequently, there is no minimal counterexample of order less than $15$ and the result follows.
\end{proof}

In addition, we need the following result which shows that each minimal counterexample has no false twins.
\begin{lemma} \label{false}
Let $G$ be a minimal counterexample.  Then $G$ does not have a pair of false twins $u,v$ $($i.e. $N_G(u)=N_G(v)$$)$ with  $|T_G(u)| = |T_G(v)| =1.$
\end{lemma}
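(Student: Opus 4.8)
The statement claims that a minimal counterexample $G$ cannot have false twins $u,v$ with $T_G(u)=\{u\}$ and $T_G(v)=\{v\}$. The natural strategy is to suppose such a pair exists and build a graph $G'$ that is ``at least as good'' as $G$ but strictly simpler in the minimal-counterexample ordering, contradicting minimality. Since $u,v$ are false twins that are \emph{not} adjacent (false twins with $uv\notin E$, because $N(u)=N(v)$ forces $v\notin N(u)$), the edge $uv$ is missing; the plan is to add it, forming $G^+ := G + uv$, so that $u$ and $v$ become \emph{true} twins in $G^+$ with $N_{G^+}[u]=N_{G^+}[v]$. Then I would compare $|M_{G^+}|$ with $|M_G|$ and compare $\tau(G^+)$ with $\tau(G)$.

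First I would establish $|M_{G^+}| \ge |M_G|$. Partition $M_G = M_G(\{u,v\},\emptyset) \uplus M_G(\emptyset,\{u\},) \uplus \cdots$ according to how a maximal induced matching meets $\{u,v\}$; the key observation is that no maximal induced matching of $G$ can cover \emph{both} $u$ and $v$ by edges $up$ and $vq$ with $p\ne q$, because then $p,q \in N(u)=N(v)$ would force $pv\in E$ and $qu \in E$, violating the induced condition — so every $M\in M_G$ covers at most one of $u,v$, \emph{or} covers them by a common edge, which is impossible here since $uv\notin E(G)$. Hence each $M\in M_G$ covers at most one of $u,v$. For such matchings one checks that $M$ remains an induced matching in $G^+$ (the only new edge $uv$ has an endpoint not covered, or both endpoints uncovered), and that maximality is preserved or, if lost, can be restored — more precisely, I would argue that every $M\in M_G$ injects into $M_{G^+}$, possibly after extending by the single edge $uv$ when $M$ covers neither $u$ nor $v$ and nothing else blocks $uv$; a careful bookkeeping (in the spirit of the proof of Lemma~\ref{y1}) shows this map can be made injective into $M_{G^+}$, giving $|M_{G^+}|\ge |M_G| > f(n)$, so $G^+$ is again a counterexample of the same order $n$.

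Next I would use Lemma~\ref{twins}: since in $G^+$ the vertices $u,v$ are true twins, contracting the twin class (or rather, one can pass to $G^+_{v\to u}=G^+$ already, and then observe $G^+$ has one fewer twin set than $G$). Concretely, in $G$ the sets $T_G(u)=\{u\}$ and $T_G(v)=\{v\}$ are two distinct singleton twin sets, whereas in $G^+$ we have $T_{G^+}(u)\supseteq\{u,v\}$, merging them into one twin set and leaving all other twin sets unchanged (adding the edge $uv$ does not alter closed neighborhoods of vertices outside $\{u,v\}$, since $N(u)=N(v)$ means no third vertex ``sees'' only one of them). Therefore $\tau(G^+) = \tau(G) - 1 < \tau(G)$. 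So $G^+$ is a counterexample with $|V(G^+)| = |V(G)| = n$ and $\tau(G^+) < \tau(G)$, contradicting the minimality of $G$.

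\textbf{Main obstacle.} The delicate point is the inequality $|M_{G^+}|\ge|M_G|$: adding an edge can \emph{destroy} maximal induced matchings (an $M$ that was maximal in $G$ might cease to be an induced matching in $G^+$, or a new forced extension could collapse two old matchings into one). I expect to need the false-twin structure crucially: because $N(u)=N(v)$, the only edge added is $uv$, and its two endpoints have identical neighborhoods, so the analysis of ``which matchings survive'' is tightly controlled — essentially every $M\in M_G$ either already avoids the conflict or extends uniquely by $\{uv\}$, and one must verify no two distinct $M_1,M_2\in M_G$ map to the same element of $M_{G^+}$. This injectivity check, handled by splitting on whether $M$ covers $u$, covers $v$, or covers neither, is the technical heart; everything else (the twin-count drop, the contradiction with minimality) is routine given Lemma~\ref{twins}.
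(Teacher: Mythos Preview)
Your strategy coincides with the paper's: add the missing edge $uv$ to form $G'=G+uv$, show $|M_{G'}|\ge|M_G|$ and $\tau(G')<\tau(G)$, and contradict minimality. The difference lies in how $|M_{G'}|\ge|M_G|$ is obtained. The paper routes through its vertex-deletion machinery: it shows $\beta(v;G)=0$ (for $vp\in M$, the matching $(M\setminus\{vp\})\cup\{up\}$ is induced in $G-v$, so every such $M$ is of $A$-type), whence Corollary~\ref{inhold0} gives $|M_G|=|M_{G-v}|+\alpha(v;G)$; since $G-v=G'-v$ and each such $M$ remains of $A$-type for $G'$, Proposition~\ref{bopro1} yields $|M_{G'}|\ge|M_{G'-v}|+\alpha(v;G')\ge|M_{G-v}|+\alpha(v;G)=|M_G|$. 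Your direct injection $M\mapsto M$ (if $M$ stays maximal in $G'$) or $M\mapsto M\cup\{uv\}$ (otherwise) is a valid alternative, and the injectivity you flag as the main obstacle is in fact immediate: a collision $M_1=M_2\cup\{uv\}$ with $M_1,M_2\in M_G$ would force $uv\in M_1\subseteq E(G)$, which is impossible. Your route is more self-contained; the paper's is shorter only because it reuses tools already in place. One small correction: Lemma~\ref{twins} concerns the operation $G_{v\to u}$, not edge addition, so it is not the right citation for $\tau(G')<\tau(G)$; your direct observation that the singleton classes $\{u\}$ and $\{v\}$ merge in $G'$ while no other twin class can split (closed neighbourhoods outside $\{u,v\}$ are unchanged) is what actually proves it.
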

\begin{proof}
Suppose not, then we can construct the connected graph $G'$ obtained from $G$ by just connecting $u,v$.
Then $\tau(G')< \tau(G)$ as $N_{G'}[u]=N_{G'}[v]$.
We claim that each maximal induced matching $M\in M_G(\emptyset,\{v\})$  is of $A$-type, i.e. $\beta(v;G)=0$, and thus $|M_G|=|M_{G-v}|+\alpha(v;G)$ by Corollary \ref{inhold0}.
W.l.o.g. let $vp\in M$ where $vp\in E(G)$. Then one can verify that $M\setminus \{vp\}\cup \{up\}$ is an induced matching of  $G-v$.  So $M$  is of $A$-type by the definition.
Combining $G-v= G'-v$ and $G-N[v]\cup N[p] = G'-N[v]\cup N[p]$ where $p\in N(v)$,
we get that  each maximal induced matching $M\in M_G(\emptyset,\{v\})$  is also of $A$-type of $G'$.
Applying Proposition  \ref{bopro1} to $G'$, we have that $|M_{G'}|\geq |M_{G'-v}|+\alpha(v;G')\geq |M_{G-v}|+\alpha(v;G)$. Consequently, $|M_{G'}|\geq|M_G|>f(n)$ and $\tau(G')< \tau(G)$, contradicting the choice of $G$.
\end{proof}

\subsection{Property A: $1<d(u)<n-1$ holds for any vertex $u$ of a minimal counterexample}
%

Hereafter, based on Theorem \ref{g13}, we always suppose  that each minimal counterexample has order at least $15$.
We begin with the following  proposition.
\begin{proposition}  \label{boproso}
Let $G$ be a minimal counterexample   and let
$H$ be a subgraph of $G$ of order $m.$ Then $|M_H|\leq g(m)$. Moreover, if $H$ is a connected proper induced  subgraph of $G$, then $|M_H|\leq f(m)$.
\end{proposition}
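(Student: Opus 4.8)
The plan is to prove the two assertions separately, relying on Proposition~\ref{Gupta0}, Theorem~\ref{TUGONG} (equivalently Proposition~\ref{Gupta0}), and the minimality of the counterexample $G$.

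\medskip\noindent\textbf{The bound $|M_H|\le g(m)$ for an arbitrary subgraph.}
First I would reduce to the case that $H$ is an \emph{induced} subgraph of $G$. Indeed, if $H$ is merely a subgraph, let $H'=G[V(H)]$ be the induced subgraph on the same vertex set; since a subgraph on the same vertices has at least as many non-edges, one expects $|M_H|\le|M_{H'}|$. (This monotonicity is exactly the phenomenon flagged in the Remark after Proposition~\ref{bopro1}: adding edges to an edge-subgraph can only increase, or keep, the count of maximal induced matchings, whereas it is \emph{not} monotone the other way; more carefully, each maximal induced matching of $H$ is an induced matching of $H'$ and can be extended to a maximal one, and distinct maximal induced matchings of $H$ remain distinct as induced matchings of $H'$ because the vertex set they cover is unchanged — though one must be a little careful that the extension map can be chosen injectively, or simply invoke the cruder fact that $|M_H|\le g(m)$ follows directly once $H$ is induced.) Actually the cleanest route avoids this subtlety entirely: Proposition~\ref{Gupta0} is stated for \emph{not necessarily connected} graphs of order $n$ with no assumption of being induced in anything, so it already gives $|M_H|\le 10^{m/5}=g(m)$ for \emph{every} graph $H$ of order $m$. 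So the first sentence of the proposition is an immediate citation of Proposition~\ref{Gupta0}, with no reduction needed.

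\medskip\noindent\textbf{The bound $|M_H|\le f(m)$ for a connected proper induced subgraph.}
Here is where minimality enters. Suppose $H$ is a connected induced subgraph of $G$ with $V(H)\subsetneq V(G)$, and write $m=|V(H)|<n$. If $|M_H|>f(m)$, then $H$ itself would be a counterexample to Theorem~\ref{main1} of order strictly smaller than $n=|V(G)|$, contradicting the minimality of $G$ (which requires every counterexample $G'$ to satisfy $|V(G')|>|V(G)|$, or $|V(G')|=|V(G)|$ with $\tau(G')\ge\tau(G)$). Hence $|M_H|\le f(m)$. This is essentially a one-line argument.

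\medskip\noindent\textbf{Anticipated obstacle.}
There is no deep obstacle; the only place to be careful is the logical scope of ``proper induced subgraph'' versus the statement being proved about $G$ itself. One must ensure $H\ne G$ so that $m<n$, which is guaranteed by the word ``proper'' in the hypothesis, and one must note that $H$ connected is exactly the hypothesis class for which $f$ is claimed to be an upper bound. A secondary point worth a remark: the proposition as stated allows $H$ in the first claim to be \emph{disconnected}, and indeed $g(m)=10^{m/5}\ge f(m)$ is not always true for connected $H$ — but that is fine, because the first claim only asserts the weaker bound $g(m)$, which holds for all graphs, while the sharper bound $f(m)$ is reserved for the connected proper-induced case where minimality applies. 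I would phrase the write-up so these two regimes are kept visibly distinct, and I would make explicit that the case $H=G$ is excluded precisely so that the inductive (minimality) hypothesis is available. The proof is therefore short: cite Proposition~\ref{Gupta0} for the first inequality, and invoke minimality of $G$ for the second.
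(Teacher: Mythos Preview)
Your proposal is correct and, once you discard the initial detour about reducing to induced subgraphs (which you rightly abandon), it is essentially identical to the paper's own proof: cite Proposition~\ref{Gupta0} for $|M_H|\le g(m)$, and invoke minimality of $G$ (smaller order) for the connected proper induced case to get $|M_H|\le f(m)$. The paper's argument is precisely these two sentences, without the preliminary discussion of monotonicity.
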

\begin{proof} The first part follows from Lemma \ref{Gupta0}.
For the second part, if $H$ is connected and $m<n$, then $H$ is not a counterexample from the choice of
$G$, which implies that $|M_H|\leq f(m)$.
\end{proof}

\begin{lemma}   \label{bolem1}
Let $G$ be a minimal counterexample. Then $G$ has no pendant vertices.
\end{lemma}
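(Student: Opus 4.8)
\textbf{Proof proposal for Lemma \ref{bolem1}.}

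The plan is to argue by contradiction: suppose the minimal counterexample $G$ has a pendant vertex $u$ with unique neighbor $v$. I would first get a handle on $v$. Since $G$ is connected and has order at least $15$, $v$ is not a pendant vertex, so $d(v)\geq 2$; in fact, since $u$ contributes nothing useful to induced matchings through $v$ except the single edge $uv$, the idea is to show that making $u$ into a twin of $v$ (or rather, examining $G-u$) cannot decrease the count, while also controlling $\tau$. The first step is to apply Proposition \ref{bopro1} to the vertex $u$: because $N(u)=\{v\}$, the sum $\sum_{p\in N(u)}|M_{G-N[u]\cup N[p]}|$ collapses to the single term $|M_{G-N[u]\cup N[v]}| = |M_{G-N[v]}|$, and $|M_{G-u}| = |M_G(\{u\},\emptyset)| + \varphi(u;G)$-type bookkeeping gives $|M_G| = |M_{G-u}| + \alpha(u;G)$ precisely when $\beta(u;G)=\varphi(u;G)$. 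So I would first analyze $\beta(u;G)$: a maximal induced matching $M$ covering $u$ must use the edge $uv$, and deleting $uv$ yields a matching of $G-u$ that fails to be maximal exactly when some edge of $G-u$ incident to $N(v)\setminus\{u\}$ can be added — i.e.\ when $v$ has a neighbor $w\neq u$ with $N[w]\cap V(M)=\emptyset$ in $G-u$. This should let me conclude that $\beta(u;G)=0$ or give a clean formula, via Corollary \ref{inhold0} or Corollary \ref{bocor2} with $r$ small (here the pendant "block" is just $K_2$, so one wants the analogue).

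The cleanest route is probably: since $u$ is pendant at $v$, every maximal induced matching of $G$ covering $u$ is of $A$-type with respect to $u$ unless $v$ has degree $1$ (excluded) — more carefully, $M\setminus\{uv\}$ is non-maximal in $G-u$ iff $v$ is isolated or $v$ has an uncovered neighbor; one then splits on whether $d(v)=2$. I would then form $G' = G_{u\to v}$, i.e.\ make $u$ a twin of $v$, equivalently add edges from $u$ to all of $N(v)$; $G'$ is connected, has the same order, and by Lemma \ref{twins} has $\tau(G')<\tau(G)$ since $|T_G(u)|=1$. The crux is then to show $|M_{G'}|\geq |M_G|$, contradicting minimality. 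For this I would compare via Proposition \ref{bopro1} applied to $u$ in both graphs, or directly compare $M_{G-u}=M_{G'-u}$ (deleting $u$ kills the added edges too) together with the $A$-type counts $\alpha(u;G)$ versus $\alpha(u;G')$; since $G-N[u]\cup N[p]$ is the same object on both sides for $p\in N(v)$, and $G'$ has \emph{more} choices of $p$, one gets $\alpha(u;G')\geq \alpha(u;G)$ plus the $\beta=\varphi$ condition holding on $G'$ because $u,v$ are twins there (so the argument mirrors Lemma \ref{false}).

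The main obstacle I anticipate is the case $d(v)=2$: then after making $u$ a twin of $v$, both $u$ and $v$ become vertices of a triangle $K_3$ (together with the other neighbor of $v$), which is a pendant block if that neighbor is a cutpoint — and Property C forbids pendant $K_3$'s, but Property C is being established \emph{after} this lemma, so I cannot invoke it. I would instead handle $d(v)=2$ directly: write $N(v)=\{u,w\}$ and use Proposition \ref{bopro1} at $u$ to get $|M_G| = |M_{G-u}| + |M_{G-N[v]}|$ or a similarly explicit identity, then observe $G-u$ still has a pendant vertex situation (now $v$ is pendant at $w$ in $G-u$) and bound everything in terms of $f$ and $g$ using Lemmas \ref{botool1}–\ref{bolemma0011}; the inequality $f(n-1) + g(n-3) < f(n)$ (or its analogue) should close it. The subcase $w$ being itself low-degree or the graph being small "near" $u$ may need the explicit base values from the proof of Theorem \ref{g13}, but since $n\geq 15$ the asymptotic estimates in Lemma \ref{botool1}(4) and the superadditivity properties of $f_1$ in Eq.~(\ref{eq}) should suffice.
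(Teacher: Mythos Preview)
Your main line — form $G'=G_{u\to v}$, note $G'-u=G-u$ and $\tau(G')<\tau(G)$ via Lemma~\ref{twins}, then compare $|M_G|=|M_{G-u}|+\alpha(u;G)$ with $|M_{G'}|\ge|M_{G'-u}|+\alpha(u;G')$ — is exactly the paper's proof (with the letters $u,v$ swapped), and it works \emph{uniformly} in $d(v)$. Your anticipated obstacle at $d(v)=2$ is not real: the contradiction is simply that $G'$ is a counterexample with strictly smaller $\tau$, which violates the minimality of $G$ regardless of what local structure (pendant $K_3$ or otherwise) $G'$ happens to acquire; there is no circularity with Property~C and no need for the analytic detour through $f(n-1)+g(n-3)<f(n)$. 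One small correction: you only need the \emph{equality} $|M_G|=|M_{G-u}|+\alpha(u;G)$ on the $G$ side (which holds because $d_G(u)=1$ forces $\beta(u;G)=\varphi(u;G)$), while on the $G'$ side the bare lower bound $|M_{G'}|\ge|M_{G'-u}|+\alpha(u;G')$ from Proposition~\ref{bopro1} already suffices — you do not need to verify $\beta=\varphi$ in $G'$.
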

\begin{proof} For contradiction, assume that there exists a vertex $v$ with $d_{G}(v)=1$. Let $u$ be the unique neighbor of $v$. Let $G'$ be the graph obtained from $G$ by connecting $v$ to each vertex of $N(u).$ Then we can verify that $G'$ is connected and $\tau(G')< \tau(G)$ as $N_{G'}[u]=N_{G'}[v]$.
Note that $G-v= G'-v$ and $G-N[v]\cup N[u] = G'-N[v]\cup N[u]$, then
$|M_{G-v}|=|M_{G'-v}|$ and  each maximal induced matching of $G$ containing the edge $uv$ is of $A$-type if and only if such a maximal induced matching is of $A$-type in  $G'$.
In view of $d_{G}(v)=1$, then $\alpha(v;G)\leq \alpha(v;G')$ and $\varphi(v;G)=\beta(v;G).$ Thus $|M_G|= |M_{G-v}|+\alpha(v;G)$ by Proposition  \ref{bopro1}. Applying Proposition  \ref{bopro1} again to $G'$, we have $|M_{G'-v}|+\alpha(v;G')\leq |M_{G'}|$. Therefore, $|M_{G'}|\geq|M_G|>f(n)$ and $\tau(G')< \tau(G)$, contradicting the choice of $G$.
\end{proof}
%
%
%

\begin{lemma} \label{dn-1}
Let $G$ be a minimal counterexample.  Then $G$ has no vertices of degree $n-1.$
\end{lemma}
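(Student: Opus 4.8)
The plan is to argue by contradiction, assuming the minimal counterexample $G$ (of order $n\geq 15$ by Theorem \ref{g13}) contains a vertex $u$ with $d(u)=n-1$, i.e.\ $u$ is adjacent to every other vertex. The first observation is that any induced matching $M$ of $G$ covering $u$ consists of a single edge: if $uw\in M$ then $M=\{uw\}$, since every other vertex is adjacent to $u$ and hence cannot be an endpoint of a second edge of $M$ without violating the induced condition. Thus $M_G(\emptyset,\{u\})$ is in bijection with the set of vertices $w$ such that $\{uw\}$ is a \emph{maximal} induced matching, i.e.\ such that $w$ together with its non-neighbors... more precisely $\{uw\}$ is maximal iff $G-N[u]\cup N[w]=G-N[w]$ has no edge, i.e.\ $V(G)\setminus N[w]$ is an independent set, which (since $u\in N[w]$) means $w$ is adjacent to all vertices at distance $2$ from... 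I would simply write $|M_G(\emptyset,\{u\})|\leq n-1$ directly, which already suffices. On the other side, $M_G(\{u\},\emptyset)=M_{G-u}$ essentially, but more useful: every maximal induced matching avoiding $u$ is a maximal induced matching of $G-u$ that ``still'' cannot be extended by an edge at $u$; in fact $|M_G(\{u\},\emptyset)|\leq |M_{G-u}|\leq g(n-1)$ by Proposition \ref{boproso}.

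The key step is then the estimate
\begin{equation*}
|M_G| = |M_G(\{u\},\emptyset)| + |M_G(\emptyset,\{u\})| \leq g(n-1) + (n-1) = 10^{\frac{n-1}{5}} + (n-1),
\end{equation*}
and it remains to check that $10^{\frac{n-1}{5}} + (n-1) \leq f(n)$ for all $n\geq 15$. Since $f(n)=10^{\frac{n-1}{5}}+f_1(n)$ for $n\geq 14$ with $f_1(n)\geq \frac{n+144}{30}\cdot 6^{\frac{n-6}{5}}$ on $14\leq n\leq 30$ and $f_1(n)=\frac{n-1}{5}\cdot 6^{\frac{n-6}{5}}$ for $n\geq 31$, it is enough to verify $n-1 < f_1(n)$; for $n\geq 31$ this is immediate because $6^{\frac{n-6}{5}}\geq 6^5 > 5$, and for $15\leq n\leq 30$ it follows from a direct check (or from $f_1(n)\geq f_1(15)$ together with monotonicity, property $(1)$ of $f_1$). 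This contradicts $|M_G|>f(n)$, completing the proof.

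I expect the main obstacle to be pinning down the bound $|M_G(\{u\},\emptyset)|\leq g(n-1)$ cleanly: one must observe that a maximal induced matching of $G$ not covering $u$ need not be maximal in $G-u$, but it \emph{is} an induced matching of $G-u$, hence the set $M_G(\{u\},\emptyset)$ injects into the collection of induced matchings of $G-u$, and we only need an upper bound by the number of \emph{maximal} induced matchings of $G-u$ — which requires the slightly finer remark that a maximal induced matching of $G$ avoiding $u$ is in particular a maximal induced matching of the induced subgraph $G-u$ (it cannot be extended within $G-u\subseteq G$). Alternatively, and perhaps more transparently, one can invoke Proposition \ref{bopro1} with the vertex $u$: $|M_G|\leq |M_{G-u}| + \sum_{p\in N(u)}|M_{G-N[u]\cup N[p]}| \leq g(n-1) + (n-1)\cdot 1$, since each $G-N[u]\cup N[p]$ has fewer than $n-1-d(u)=0$ vertices... wait, $N[u]=V(G)$, so $G-N[u]\cup N[p]$ is empty and contributes $1$ each, giving exactly the same bound $g(n-1)+(n-1)$. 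This is the slick route and avoids any delicate maximality discussion; the only remaining work is the elementary numerical comparison with $f(n)$, which I would dispatch with the monotonicity properties of $f_1$ recorded after Eq.~(\ref{eq}).
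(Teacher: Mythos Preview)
Your proposal is correct, and the ``slick route'' you land on at the end---applying Proposition~\ref{bopro1} to the vertex $u$ to get $|M_G|\le |M_{G-u}|+\sum_{p\in N(u)}|M_{G-N[u]\cup N[p]}|\le g(n-1)+(n-1)\le f(n)$---is exactly the paper's one-line argument. Your added numerical verification that $n-1<f_1(n)$ is more explicit than what the paper writes, but the underlying proof is the same; incidentally, your earlier worry about whether $M_G(\{u\},\emptyset)$ injects into $M_{G-u}$ is unnecessary, since a maximal induced matching of $G$ avoiding $u$ is automatically maximal in the induced subgraph $G-u$ (any extending edge in $G-u$ would also extend it in $G$).
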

\begin{proof}
For contradiction, assume that $d(v)=n-1.$ By Proposition \ref{bopro1},
$$|M_G|\leq |M_{G-v}|+\sum_{p\in N(v)}|M_{G-N[v]\cup N[p]}|\leq g(n-1)+n-1\leq f(n),$$
which contradicts the fact that $G$ is a counterexample.
\end{proof}

\subsection{Property B:  $u$ is a cutpoint  if $d(u)\ge 6$}
\begin{lemma}   \label{bolem6c}
Let $G$ be a minimal counterexample.
If there is a  vertex  $v\in V(G)$ such that $d(v)\geq 6$, then $v$ is a cutpoint of $G$.
\end{lemma}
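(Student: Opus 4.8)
\textbf{Proof proposal for Lemma \ref{bolem6c}.}
The plan is to argue by contradiction: suppose $v$ is a vertex with $d(v)=x\ge 6$ that is \emph{not} a cutpoint, so that $G-v$ is connected. I will apply the upper bound of Proposition \ref{bopro1} to the vertex $v$, namely
\[
|M_G|\le |M_{G-v}|+\sum_{p\in N(v)}|M_{G-N[v]\cup N[p]}|.
\]
Since $G-v$ is a connected proper induced subgraph of $G$ on $n-1$ vertices, Proposition \ref{boproso} gives $|M_{G-v}|\le f(n-1)$. Each term $|M_{G-N[v]\cup N[p]}|$ is the number of maximal induced matchings of a graph on at most $n-d(v)-1=n-x-1$ vertices (at most $n-x-2$ when $p$ has a neighbour outside $N[v]$), so by Proposition \ref{boproso} it is at most $g(n-x-1)$. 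Hence $|M_G|\le f(n-1)+x\cdot g(n-x-1)$. The goal then reduces to showing $f(n-1)+x\cdot g(n-x-1)\le f(n)$ for all relevant $x\ge 6$ and $n\ge 15$, which contradicts $|M_G|>f(n)$.

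First I would handle the dependence on $x$: the function $x\mapsto x\cdot g(n-x-1)=x\cdot 10^{(n-x-1)/5}$ is decreasing in $x$ for $x\ge 6$ (its logarithmic derivative is $1/x-\tfrac{\ln 10}{5}<0$ once $x\ge 6$, since $5/\ln 10\approx 2.17$), so it suffices to treat $x=6$, i.e. to prove $f(n-1)+6\cdot g(n-7)\le f(n)$ for $n\ge 15$. Here I split into the ranges dictated by the definition of $f$. For $n\ge 31$ (and $n-1\ge 31$, i.e.\ $n\ge 32$, with the boundary case $n=31$ checked directly), write $f(n)=10^{(n-1)/5}+f_1(n)$ and $f(n-1)=10^{(n-2)/5}+f_1(n-1)$; then $f(n-1)+6\cdot g(n-7)=10^{(n-2)/5}+6\cdot 10^{(n-7)/5}+f_1(n-1)$. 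Comparing $10^{(n-2)/5}+6\cdot 10^{(n-7)/5}$ with $10^{(n-1)/5}$ amounts to $10^{-1/5}+6\cdot 10^{-6/5}\approx 0.631+0.379=1.010>1$, so that part is actually slightly too large and must be absorbed using $f_1$: we need $f_1(n-1)+\bigl(10^{-1/5}+6\cdot 10^{-6/5}-1\bigr)10^{(n-1)/5}\le f_1(n)$. Since $\bigl(10^{-1/5}+6\cdot 10^{-6/5}-1\bigr)10^{(n-1)/5}$ is a small multiple of $10^{(n-1)/5}=g(n-1)$ and $f_1(n)-f_1(n-1)=\tfrac{1}{5}(n-1)6^{(n-6)/5}-\tfrac{1}{5}(n-2)6^{(n-7)/5}$ grows like $6^{(n-6)/5}$, this holds for all large $n$ and can be checked to hold from $n=15$ onward after dealing with the transitional ranges $14\le n\le 30$ (where $f_1(n)=\tfrac{n+144}{30}6^{(n-6)/5}$) by the same kind of estimate, using the monotonicity facts (1)--(2) for $f_1$ stated right after Eq.~(\ref{eq}) together with Lemma \ref{botool1}. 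The remaining small cases $15\le n\le 18$ or so, where $n-1$ may fall in a different piece of $f$ than $n$, I would dispatch by direct numerical verification.

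Actually, a cleaner route that avoids the awkward $1.010$ constant: note that if $v$ is not a cutpoint then by Property D (no non-cutpoints) — wait, Property D is exactly what is being built up, so I should not invoke it. Instead I would sharpen the bound on the sum: if $v$ is not a cutpoint, then $G-v$ is $2$-connected-ish enough that at most one neighbour $p$ of $v$ can satisfy $N[p]\subseteq N[v]$ in the worst case, but more usefully, for most $p\in N(v)$ the set $N[v]\cup N[p]$ has size $\ge d(v)+2$, so $|M_{G-N[v]\cup N[p]}|\le g(n-x-2)$ for all but at most a bounded number of $p$; this replaces $x\cdot g(n-x-1)$ by roughly $g(n-x-1)+(x-1)g(n-x-2)$, which is $p(x+1)$ in the notation of Lemma \ref{gong3} up to reindexing, hence maximized at the smallest admissible argument and comfortably $<f(n)-f(n-1)$. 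The main obstacle I anticipate is precisely getting the constant to come out below $1$: the naive bound $x\cdot g(n-x-1)$ with $x=6$ gives a coefficient of $g(n-1)$ that is just over $1$, so the proof genuinely requires either exploiting the $f_1$ slack (which works because $f_1$ is superlinear in the relevant exponential scale) or the refined neighbourhood-size argument above; I would present whichever makes the finite case-check shortest, and in either case the low-order cases $15\le n\le 30$ must be verified against the non-asymptotic pieces of $f(n)$ by hand or by the inequalities already packaged in Lemmas \ref{botool1} and \ref{bolemma0011}.
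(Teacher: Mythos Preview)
Your overall strategy matches the paper's: assume $v$ is a non-cutpoint, use Proposition~\ref{bopro1}, bound $|M_{G-v}|\le f(n-1)$ since $G-v$ is connected, and control the sum $\sum_{p\in N(v)}|M_{G-N[v]\cup N[p]}|$. You also correctly spot that the crude bound $x\cdot g(n-x-1)$ gives a coefficient $10^{-1/5}+6\cdot 10^{-6/5}\approx 1.01>1$ in front of $10^{(n-1)/5}$, so some refinement is needed. However, both of your proposed fixes have gaps.

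Your first fix (absorb the $0.01$ excess into $f_1(n)-f_1(n-1)$) does \emph{not} work for large $n$: the excess is a fixed positive multiple of $10^{(n-1)/5}$, while $f_1(n)-f_1(n-1)$ grows only like $6^{n/5}$, so for $n$ large enough the slack is exponentially too small. Concretely, for $n=100$ one already has $f(n-1)+6\,g(n-7)>f(n)$ by a wide margin. Your second fix asserts that ``for most $p\in N(v)$'' one has $|N[v]\cup N[p]|\ge d(v)+2$, replacing the sum by $g(n-x-1)+(x-1)g(n-x-2)$; but nothing in the hypotheses forces all but one neighbour of $v$ to reach outside $N[v]$, and the ``$2$-connected-ish'' heuristic is not a proof.

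The paper's refinement goes the \emph{other} way and is both easy to justify and sufficient: by Lemma~\ref{dn-1} we have $d(v)<n-1$, so (since $G$ is connected) \emph{at least one} neighbour $p$ satisfies $N[p]\nsubseteq N[v]$. This gives
\[
\sum_{p\in N(v)}|M_{G-N[v]\cup N[p]}|\le (d(v)-1)\,g(n-d(v)-1)+g(n-d(v)-2),
\]
which is exactly $p(d(v))$ in Lemma~\ref{gong3}; since $p(x)$ is decreasing for $x\ge 3$, for $d(v)\ge 6$ this is at most $5\,g(n-7)+g(n-8)$. Then
\[
|M_G|\le f(n-1)+5\,g(n-7)+g(n-8)=(10^{-1/5}+5\cdot 10^{-6/5}+10^{-7/5})\,10^{(n-1)/5}+f_1(n-1)<0.986\cdot 10^{(n-1)/5}+f_1(n-1)<f(n),
\]
using only $f_1(n-1)<f_1(n)$. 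No case-by-case verification for small $n$ is needed. So the missing ingredient is simply the one-line use of Lemma~\ref{dn-1} to peel off a single improved term, with the inequality oriented as $(x-1)g(n-x-1)+g(n-x-2)$ rather than your $g(n-x-1)+(x-1)g(n-x-2)$.
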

\begin{proof}  Suppose for contradiction that $v$ is a non-cutpoint, then $G-v$ is connected and thus $|M_{G-v}|\leq f(n-1)$.
By Lemma \ref{dn-1}, there is
a neighbor $p\in N(v)$ such that $N[p]\nsubseteq N[v]$.
Due to Proposition \ref{bopro1} and Lemma \ref{gong3},  it is easy to verify that when $d(v)\geq 6$ and $n\geq 15$,
\begin{equation*}\label{bocom1}
\begin{aligned}
|M_G|& \leq |M_{G-v}|+\sum_{p\in N(v)}|M_{G-N[v]\cup N[p]}|\\
&\leq f(n-1)+(d(v)-1)g(n-d(v)-1)+g(n-d(v)-2)\\
&\leq f(n-1)+5g(n-7)+g(n-8)\\
&=(10^{-\frac{1}{5}}+5\cdot  10^{-\frac{6}{5}}+10^{-\frac{7}{5}})\cdot 10^{\frac{n-1}{5}}+f_1(n-1)\\
&<0.9860\cdot 10^{\frac{n-1}{5}}+f_1(n-1)<f(n),
\end{aligned}
\end{equation*}
contradicting the assumption that $G$ is a counterexample.  The result follows.
\end{proof}

Combining Lemma   \ref{bolem1} with  Lemma \ref{bolem6c}, we can confine the degree of non-cutpoints in a minimal counterexample $G$ as follows.
\begin{corollary} \label{structureb}
Let $G$ be a minimal counterexample and $v$ be a non-cutpoint of $G$. Then $2\leq d(v)\leq 5$.
\end{corollary}

\subsection{Property C:  each minimal counterexample has no  pendant   blocks  $K_3, K_4, K_5, K_6$}
Let $H$ be a subgraph of  graph $G$. 
When $v\in V(G)$ and  $v\notin V(H)$, we  define $N_H(v)=N_G(v)\cap V(H)$ and $d_H(v)=|N_H(v)|.$ We need establish several lemmas.
\begin{lemma} \label{bo1}
Let $G$ be a minimal counterexample.
Suppose that there exists a pendant block $K_r$ of $G$ with cutpoint $v$. If $4\leq r\leq 6,$ then\\
{\em (1).} $G-K_r$ is disconnected if $d(v)\ge r+1$;\\
{\em (2).} $G-N[v]$ is disconnected if $d(v)= r$.
\end{lemma}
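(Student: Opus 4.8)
\textbf{Proof proposal for Lemma \ref{bo1}.}

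The plan is to argue by contradiction using the minimality of $G$ together with the structural restrictions on non-cutpoints (Corollary \ref{structureb}) and the pendant-block counting formula (Corollary \ref{bocor2}). Suppose the conclusion fails; then in case (1) the graph $G-K_r$ is connected while $d(v)\ge r+1$, and in case (2) the graph $G-N[v]$ is connected while $d(v)=r$. In either situation I will produce an upper bound on $|M_G|$ of the form $\binom{r-1}{2}\cdot|M_{G-K_r}|+\sum_{p\in N(v)}|M_{G-N[v]\cup N[p]}|$ from Corollary \ref{bocor2}, where the cutpoint $v$ of the pendant block $K_r$ satisfies $r-1\le d(v)$. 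The key point is that connectedness of $G-K_r$ (resp.\ $G-N[v]$) lets me replace the crude bound $|M_{G-K_r}|\le g(n-r)$ by the sharper $|M_{G-K_r}|\le f(n-r)$ from Proposition \ref{boproso}, since $G-K_r$ is then a connected proper induced subgraph of $G$.

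First I would handle case (1). Write $s=d(v)\ge r+1$, and note $v$ has $s-(r-1)$ neighbors outside the block $K_r$. For each $p\in N(v)$, the graph $G-N[v]\cup N[p]$ has order at most $n-s-1$ if $N[p]\subseteq N[v]$ and at most $n-s-2$ otherwise; since $G$ has a vertex of degree $\ge r+1\ge 5$ that is a cutpoint (it is the cutpoint of the pendant block), and by Lemma \ref{dn-1} not every neighbor can have its closed neighborhood inside $N[v]$, at least one summand drops by an extra vertex. So $\sum_{p\in N(v)}|M_{G-N[v]\cup N[p]}|\le (s-1)g(n-s-1)+g(n-s-2)$, which by Lemma \ref{gong3} (the function $p(x)=(x-1)g(n-1-x)+g(n-2-x)$ is decreasing for $x\ge 3$) is maximized at $s=r+1$, giving at most $r\cdot g(n-r-2)+g(n-r-3)$. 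Combined with $\binom{r-1}{2}f(n-r)$, this is exactly the left-hand side of Lemma \ref{bolemma0011}(2), which is strictly less than $f(n)$ for $r\in\{4,5,6\}$ and $n\ge 14$ — contradicting that $G$ is a counterexample.

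For case (2), with $d(v)=r$ and $G-N[v]$ connected of order $n-r-1$, I would again apply Corollary \ref{bocor2}: the term $|M_{G-K_r}|$ — note $G-K_r$ contains $G-N[v]$ plus possibly more, but here $d(v)=r$ means $v$ has exactly one neighbor outside $K_r$, call it $w$, and $G-K_r$ is connected iff... actually I should split on whether $G-K_r$ is connected, and if so bound $|M_{G-K_r}|\le f(n-r)$; meanwhile $G-N[v]=(G-K_r)-w$ connected lets me also bound that piece, and for the sum over $p\in N(v)$ the $r-1$ neighbors inside $K_r$ each contribute $|M_{G-N[v]}|\le f(n-r-1)$ or a $g$-term of smaller order. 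Assembling these yields a bound of the shape $\binom{r-1}{2}f(n-r)+(r-1)f(n-r-1)+g(n-r-2)$, precisely Lemma \ref{bolemma0011}(1), again $<f(n)$ — the desired contradiction. The main obstacle I anticipate is bookkeeping the exact orders of the residual graphs $G-N[v]\cup N[p]$ for $p$ ranging over the clique-neighbors versus the outside-neighbors of $v$, and verifying in each case that enough of the summands genuinely lose the extra vertex so that the inequalities of Lemma \ref{bolemma0011} apply; this requires invoking Lemma \ref{dn-1} (no vertex of degree $n-1$) and possibly Lemma \ref{false} to rule out degenerate neighborhood coincidences, but no new idea beyond careful case analysis.
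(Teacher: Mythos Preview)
Your proposal is correct and follows essentially the same approach as the paper: contradiction via Corollary \ref{bocor2}, using connectedness to invoke the $f$-bound from Proposition \ref{boproso}, then Lemma \ref{dn-1} to force at least one summand to drop an extra vertex, Lemma \ref{gong3} to reduce to the worst value of $d(v)$, and finally Lemma \ref{bolemma0011}(2) and (1) for parts (1) and (2) respectively. One minor clean-up: in case (2) there is no need to split on whether $G-K_r$ is connected, since $d(v)=r$ forces $v$ to have exactly one neighbor $w$ outside $K_r$, and then $G-K_r$ is automatically connected (it is the unique non-$K_r$ component of $G-v$ that $w$ lives in); likewise Corollary \ref{structureb} and Lemma \ref{false} are not needed here.
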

\begin{proof} (1).
By Corollary \ref{bocor2}, $|M_G|={{r-1}\choose 2}\cdot |M_{G-K_r}|+\sum_{p\in N(v)}|M_{G-N[v]\cup N[p]}|.$
For contradiction, assume that $H=G-K_r$ is connected, then $|M_{H}|\leq f(n-r)$.
For $p\in N(v)$,   $|M_{G-N[v]\cup N[p]}|\leq g(n-d(v)-1)$ if $N_{G}[p]\subseteq N_{G}[v]$;
and $|M_{G-N[v]\cup N[p]}|\leq g(n-d(v)-2)$ otherwise.
Lemma \ref{dn-1} implies that there is a neighbor $p\in N_{H}(v)$
such that $N_{G}[p]\nsubseteq N_{G}[v]$.
Consequently,
\begin{equation*}\label{boi1}
\begin{aligned}
|M_G|&\leq {{r-1}\choose 2}\cdot f(n-r)+(d(v)-1)\cdot g(n-d(v)-1)+g(n-d(v)-2)\\
&\leq {{r-1}\choose 2}\cdot f(n-r)+r\cdot g(n-r-2)+g(n-r-3)<f(n),
\end{aligned}
\end{equation*}
where the last two inequalities follow  from Lemmas  \ref{gong3} and \ref{bolemma0011} $(2)$, respectively.
This contradicts the choice of $G$.

(2). The proof is an analogue of (1), so we omit the detail.
(To prove $|M_G|\leq f(n)$, one is referred to  Lemma \ref{bolemma0011} $(1)$.)
\end{proof}

\begin{lemma} \label{ab}
Let $G$ be a minimal counterexample. If $K_r$, $r\in \{4,6\},$ is a pendant block of $G$ with cutpoint $v$, then $d(v)\geq r+1$.
\end{lemma}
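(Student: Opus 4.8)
The plan is to argue by contradiction, so suppose $d(v)\le r$. Since $K_r$ is a pendant block with cutpoint $v$, the vertex $v$ is adjacent to the other $r-1$ vertices of $K_r$ and, being a cutpoint of the connected graph $G$ with $n\ge 15>r$, it has at least one neighbour outside $K_r$; hence $d(v)=r$, and $v$ has a \emph{unique} neighbour $w\notin V(K_r)$. Every vertex of $K_r$ other than $v$ has all of its neighbours inside $K_r$ (otherwise it would be a second cutpoint of this block), so $N[v]=V(K_r)\cup\{w\}$ and $N[p]=V(K_r)\subseteq N[v]$ for each $p\in V(K_r)\setminus\{v\}$. Feeding this into Corollary~\ref{bocor2} gives
\[
|M_G|=\binom{r-1}{2}\,|M_{G-K_r}|+(r-1)\,|M_{G-N[v]}|+|M_{G-V(K_r)-N[w]}|.
\]
Because $v$ has exactly one neighbour outside $K_r$, the graph $G-v$ has precisely the two components $K_{r-1}$ and $G-K_r$, so $G-K_r$ is a connected proper induced subgraph of $G$ and $|M_{G-K_r}|\le f(n-r)$. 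Moreover $G-N[v]=(G-K_r)-w$, which is disconnected by Lemma~\ref{bo1}(2); hence $w$ is a cutpoint of $G-K_r$, so $d_{G-K_r}(w)\ge 2$ and therefore $d_G(w)\ge 3$, giving $|M_{G-V(K_r)-N[w]}|\le g(n-r-3)$.

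It then remains to control $(r-1)|M_{G-N[v]}|$, and I would split according to whether the disconnected graph $G-N[v]$ is a disjoint union of copies of $K_5$. If it is \emph{not}, then some component $D$ of $G-N[v]$ is a connected proper induced subgraph of $G$ with $D\ne K_5$, so $|M_D|\le\tfrac{f(4)}{g(4)}\,g(|D|)$: this is Lemma~\ref{botool1}(1) when $|D|\ne 5$, while when $|D|=5$ Proposition~\ref{Gupta0} gives $|M_D|<10$, i.e. $|M_D|\le 9<\tfrac{f(4)}{g(4)}\,g(5)$. Every other component contributes at most $g$ of its order, so $|M_{G-N[v]}|\le\tfrac{f(4)}{g(4)}\,g(n-r-1)$, and then
\[
|M_G|\le\binom{r-1}{2}f(n-r)+\tfrac{f(4)}{g(4)}(r-1)g(n-r-1)+g(n-r-3)<f(n)
\]
by Lemma~\ref{bolemma0011}(4). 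If instead $G-N[v]=sK_5$, then since it is disconnected $s\ge 2$ and $n-r-1=5s$, so $|M_{G-N[v]}|=10^{s}=g(n-r-1)$; moreover $G-K_r$ is $w$ joined to $sK_5$, with $w$ adjacent to $k_i\ge 1$ vertices of the $i$-th copy and to nothing else, so $G-V(K_r)-N[w]=\bigsqcup_{i=1}^{s}K_{5-k_i}$ has at most $6^{s}=6^{(n-r-1)/5}$ maximal induced matchings (the count for each $K_{5-k_i}$ being at most $\binom{4}{2}=6$). Hence
\[
|M_G|\le\binom{r-1}{2}f(n-r)+(r-1)g(n-r-1)+6^{(n-r-1)/5}<f(n)
\]
by Lemma~\ref{bolemma0011}(3). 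Either way $|M_G|\le f(n)$, contradicting the choice of $G$; thus $d(v)\ge r+1$.

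The delicate point — and the reason the split into two cases seems unavoidable — is that the crude bound $\binom{r-1}{2}f(n-r)+(r-1)g(n-r-1)+g(n-r-3)$ already exceeds $f(n)$ for $r=4$ and all large $n$, so one is forced to extract genuine structural savings: the constant factor $f(4)/g(4)<1$ coming from a component of $G-N[v]$ that fails to be an extremal $K_5$ in the first case, and in the second case the observation that deleting $N[w]$ from any $K_5$ incident with $w$ leaves at most a $K_4$, which turns the third term from $g(n-r-3)$ into the far smaller $6^{(n-r-1)/5}$ appearing in Lemma~\ref{bolemma0011}(3). The one extra ingredient that makes the first case go through — deducing $d_G(w)\ge 3$, hence the bound $g(n-r-3)$ — is precisely the disconnectedness of $G-N[v]$ supplied by Lemma~\ref{bo1}(2).
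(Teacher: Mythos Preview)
Your proof is correct and follows essentially the same route as the paper's own argument: reduce to $d(v)=r$, apply Corollary~\ref{bocor2}, use Lemma~\ref{bo1}(2) to get that $G-N[v]$ is disconnected, bound the first term by $f(n-r)$, and split into the two cases (all components $K_5$ vs.\ not) handled respectively by Lemma~\ref{bolemma0011}(3) and~(4). The only cosmetic differences are that you obtain the bound $g(n-r-3)$ for the $p=w$ summand via ``$w$ is a cutpoint of $G-K_r$, hence $d_{G-K_r}(w)\ge 2$'' whereas the paper obtains it via $\prod_i g(n_i-1)\le g(n-r-1-s)\le g(n-r-3)$ using $s\ge 2$, and you are slightly more careful in treating a $5$-vertex component $D\neq K_5$ separately (invoking Proposition~\ref{Gupta0} to get $|M_D|\le 9$) where the paper glosses over this case.
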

\begin{proof}
Assume for contradiction that $G$ contains a pendant block $K_r$ with cutpoint $v$ satisfying $d(v)= r$.
Note that  $|N_{K_r}(v)|=r-1$,
then $|N_{G-K_r}(v)|=1$.
Denote by $N_{G-K_r}(v)=\{u\}$, and  $H=G-K_r-u$.
Lemma \ref{bo1} $(2)$ shows that the induced subgraph $H$  is disconnected.
Let $H_1,H_2,\ldots, H_s$ be all components of $H$ with $|V(H_i)|=n_i$ and $s\geq 2$.
By Corollary \ref{bocor2}, $|M_G|={{r-1}\choose 2}\cdot |M_{G-K_r}|+\sum_{p\in N(v)}|M_{G-N[v]\cup N[p]}|.$
Observe that $G-K_r$ is connected, then $|M_{G-K_r}|\leq f(n-r).$
For every $p\in  N_{K_r}(v)$, it holds that
\begin{equation}\label{boc4}
\begin{aligned}
|M_{G-N[v]\cup N[p]}|= |M_H|=\prod_{i=1}^s |M_{H_{i}}| \leq \prod_{i=1}^s f(n_i).
\end{aligned}
\end{equation} When $p=u$,
\begin{equation}\label{boc6}
\begin{aligned}
|M_{G-N[v]\cup N[u]}|\leq \prod_{i=1}^s |M_{H_{i}-N_{H_{i}}(u)}|\leq \prod_{i=1}^s g(n_i-1)=10^{\frac{n-r-1-s}{5}}\leq 10^{\frac{n-r-3}{5}},
\end{aligned}
\end{equation}as $|H_{i}-N_{H_{i}}(u)|\leq n_i-1,$ $\sum_{i=1}^s n_i=n-r-1$  and $s\geq 2.$
Consequently, we have that
\begin{equation}\label{ad}
\begin{aligned}
|M_G|&\leq {{r-1}\choose 2}\cdot f(n-r)+(r-1)\cdot \prod_{i=1}^s f(n_i)+ 10^{\frac{n-r-3}{5}}.
\end{aligned}
\end{equation}
If there exists a  component $H_j\neq K_5,$ then  $f(n_j)\leq \frac{f(4)}{g(4)}\cdot g(n_j)$ by Lemma \ref{botool1}.
Then $\prod_{i=1}^s f(n_i)\leq \frac{f(4)}{g(4)}\cdot \prod_{i=1}^s g(n_i)= \frac{f(4)}{g(4)}\cdot 10^{\frac{n-r-1}{5}}$, as $f(n_i)\leq g(n_i)~(i\neq j)$ and $\sum_{i=1}^s n_i= n-r-1$.
Further, Ineq.~(\ref{ad}) yields
\begin{equation*}\label{ae}
\begin{aligned}
|M_G|&\leq {{r-1}\choose 2}\cdot f(n-r)+ \frac{f(4)}{g(4)}\cdot (r-1)\cdot 10^{\frac{n-r-1}{5}}+ 10^{\frac{n-r-3}{5}}< f(n),
\end{aligned}
\end{equation*}
where the last inequality follows from Lemma \ref{bolemma0011} $(4)$,
contradicting the fact that $G$ is a counterexample. This implies that $H$ is the disjoint union of $K_5$.

When $H_i=K_5~(1\leq i\leq s)$, Ineq.s~(\ref{boc4}) and (\ref{boc6})   yield
\begin{equation} \label{boc465}
\begin{aligned}
|M_G|&\leq {{r-1}\choose 2}\cdot f(n-r)+(r-1)\cdot \prod_{i=1}^s |M_{H_{i}}|+\prod_{i=1}^s |M_{H_{i}-N_{H_{i}}(u)}|\\
&\leq {{r-1}\choose 2}\cdot f(n-r)+(r-1)\cdot 10^{\frac{n-r-1}{5}}+6^{\frac{n-r-1}{5}}<f(n),
\end{aligned}
\end{equation}
where the last  two inequalities follow from $|M_{H_{i}}|=|M_{K_5}|=10, |M_{H_{i}-N_{H_{i}}(u)}|\leq |M_{K_4}|=6~(1\leq i\leq s)$ and Lemma \ref{bolemma0011}~$(3)$, respectively. This contradicts the fact that $G$ is a counterexample. The result follows.
\end{proof}

\begin{lemma} \label{bopen5}
Let $G$ be a minimal counterexample.  If $K_5$ is a pendant block of $G$ with cutpoint $v$,  then $d(v)\geq 6$.
\end{lemma}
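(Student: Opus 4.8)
The plan is to mimic the structure of the preceding Lemma~\ref{ab}, since the pendant block $K_5$ behaves almost identically to the $K_4,K_6$ cases except that $r=5$ is no longer in the set $\{4,6\}$ for which Lemma~\ref{bolemma0011}~(3),(4) are stated; this is exactly the obstacle, and the numerical lemmas will need to be re-examined with $r=5$ plugged in. First I would argue by contradiction: suppose $K_5$ is a pendant block with cutpoint $v$ and $d(v)\le 5$; by Corollary~\ref{structureb} and the fact that $v$ is a cutpoint of degree at least $r-1=4$, the only remaining possibility is $d(v)=5$, so $|N_{G-K_5}(v)|=1$. Write $N_{G-K_5}(v)=\{u\}$ and set $H=G-K_5-u$. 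By Lemma~\ref{bo1}~(2) (with $r=5$), $H$ is disconnected; let $H_1,\dots,H_s$ ($s\ge2$) be its components with $|V(H_i)|=n_i$ and $\sum n_i=n-6$.

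Next I would reproduce the counting inequality of Lemma~\ref{ab}. By Corollary~\ref{bocor2},
$$|M_G|=\binom{4}{2}|M_{G-K_5}|+\sum_{p\in N(v)}|M_{G-N[v]\cup N[p]}|=6\,|M_{G-K_5}|+\sum_{p\in N(v)}|M_{G-N[v]\cup N[p]}|.$$
Since $G-K_5$ is a connected proper induced subgraph, $|M_{G-K_5}|\le f(n-5)$ by Proposition~\ref{boproso}. For the four vertices $p\in N_{K_5}(v)$ we get $|M_{G-N[v]\cup N[p]}|=|M_H|=\prod_i|M_{H_i}|\le\prod_i f(n_i)$, and for $p=u$ we get $|M_{G-N[v]\cup N[u]}|\le\prod_i|M_{H_i-N_{H_i}(u)}|\le\prod_i g(n_i-1)=10^{(n-6-s)/5}\le 10^{(n-8)/5}$ using $s\ge2$. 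Hence
$$|M_G|\le 6\,f(n-5)+4\prod_{i=1}^s f(n_i)+10^{(n-8)/5}.$$
If some component $H_j\ne K_5$ then by Lemma~\ref{botool1} $f(n_j)\le\frac{f(4)}{g(4)}g(n_j)$, so $\prod_i f(n_i)\le\frac{f(4)}{g(4)}10^{(n-6)/5}$, giving $|M_G|\le 6f(n-5)+\frac{f(4)}{g(4)}\cdot 4\cdot 10^{(n-6)/5}+10^{(n-8)/5}$; this should be $<f(n)$ by an analogue of Lemma~\ref{bolemma0011}~(4) with $r=5$, which I would either cite after checking it holds for $r=5$ as well, or verify directly as in the proof of that lemma (the coefficient of $10^{(n-1)/5}$ is $6\cdot10^{-6/5}+\frac{f(4)}{g(4)}\cdot4\cdot10^{-6/5}+10^{-8/5}$, which is well below $1$, and the $f_1$-part is handled by property~(2) of $f_1$ with $s=1,t=5$... but here $t=n-(n-5)=5\ge4$, so it works). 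Otherwise every $H_i=K_5$, so $|M_{H_i}|=10$ and $|M_{H_i-N_{H_i}(u)}|\le|M_{K_4}|=6$, yielding $|M_G|\le 6f(n-5)+4\cdot 10^{(n-6)/5}+6^{(n-6)/5}$, which should be $<f(n)$ by an analogue of Lemma~\ref{bolemma0011}~(3) with $r=5$.

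The main obstacle, as noted, is that Lemma~\ref{bolemma0011}~(3) and (4) are explicitly stated only for $r\in\{4,6\}$, so either the paper intends these to extend to $r=5$ (likely, since the proof technique is uniform) and the authors will simply invoke them, or a short separate verification is needed; I would therefore phrase the proof to parallel Lemma~\ref{ab} as closely as possible and insert the two inequality checks for $r=5$ inline, noting that both reduce — via Lemma~\ref{botool1} to bound $f(n-5-1)$ or the components by a constant times $g$ — to checking that a fixed linear combination of powers $10^{(n-1)/5}$ has coefficient strictly less than $1$ together with the subadditivity property~(2) of $f_1$ (with $t=n-(n-5)=5\ge 4$ and $s\le4$, wait $s$ here is the number of $K_5$'s and enters only through the already-bounded product term, not through $f_1$, so property~(2) is applied with $s=1$). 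A direct finite check for the boundary range $n-5<14$ (i.e. $15\le n\le 18$) completes the argument, exactly as in the first case of Lemma~\ref{bolemma0011}'s proof. This contradicts $|M_G|>f(n)$, so $d(v)\ge 6$.
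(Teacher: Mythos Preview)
Your overall structure mirrors Lemma~\ref{ab} correctly, but there is a genuine numerical gap in the first case (``some $H_j\neq K_5$''). The constant $\frac{f(4)}{g(4)}<0.9510$ from Lemma~\ref{botool1}(1) is too weak here: the coefficient of $10^{(n-1)/5}$ in your bound $6f(n-5)+\tfrac{f(4)}{g(4)}\cdot4\cdot10^{(n-6)/5}+10^{(n-8)/5}$ is $6\cdot10^{-1}+0.9510\cdot4\cdot10^{-1}+10^{-7/5}\approx 1.020$, not ``well below $1$'' (you miscomputed the exponents --- $10^{(n-6)/5}=10^{-1}\cdot10^{(n-1)/5}$, not $10^{-6/5}\cdot10^{(n-1)/5}$). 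Concretely, at $n=41$ your upper bound evaluates to about $1.04\times10^8$ while $f(41)\approx1.02\times10^8$. This is precisely why Lemma~\ref{bolemma0011}(3),(4) are stated only for $r\in\{4,6\}$: the case $r=5$ is the tight one and does \emph{not} extend. In fact for (3) with $r=5$ one gets $6f(n-5)+4\cdot10^{(n-6)/5}+6^{(n-6)/5}=f(n)$ exactly when $n\ge36$, and for (4) the inequality outright fails.

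The paper therefore needs a more refined case split. Its Claim~1 excludes all components outside $\{K_4,K_5,K_6\}$, which allows the sharper constant $0.9$ (using that any connected $H_j$ on $4$ or $6$ vertices other than $K_4,K_6$ has $|M_{H_j}|/g(n_j)<0.9$, and Lemma~\ref{botool1}(3) for $n_j\ge7$); even this gives a coefficient of only $\approx 0.9998$. Claims~2 and~3 then separately exclude $K_4$ and $K_6$ components by invoking Lemma~\ref{ab}: since those cutpoints must have degree $\ge r+1$, the vertex $u$ meets such a component in at least two vertices, which sharply improves the bound on $|M_{H_i-N_{H_i}(u)}|$. Only after all three claims is the all-$K_5$ case reached, where one gets $|M_G|\le f(n)$ (with equality for large $n$, handled by a finite check for $15\le n\le35$). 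Your two-case split skips exactly the structural input from Lemma~\ref{ab} that makes the numerics close.
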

\begin{proof}
For contradiction, suppose that $G$ contains a pendant block $K_5$  with cutpoint $v$ satisfying $d(v)= 5$.
Since $|N_{K_5}(v)|=4$,   we may denote by $N_{G-K_5}(v)=\{u\}$  and  $H=G-K_5-u$.
Lemma \ref{bo1} $(2)$ shows that the induced subgraph $H$  is disconnected.
Let $H_1,H_2,\ldots, H_s$ be all components of $H$ with $|V(H_i)|=n_i$, where $\sum_{i=1}^s n_i=n-6$ and $s\geq 2$.
Similar to the proof of Lemma \ref{ab},
by Ineq. (\ref{boc465}), note that here $r= 5$,
\begin{equation}\label{bocpen56}
\begin{aligned}
|M_G|&\leq {{r-1}\choose 2}\cdot f(n-r)+(r-1)\cdot \prod_{i=1}^s |M_{H_{i}}|+\prod_{i=1}^s |M_{H_{i}-N_{H_{i}}(u)}|\\
&=6f(n-5)+4\cdot \prod_{i=1}^s |M_{H_{i}}|+\prod_{i=1}^s |M_{H_{i}-N_{H_{i}}(u)}|.
\end{aligned}
\end{equation}

\noindent{\bf Claim 1.} {\it  There is no component other than $\{K_4,K_5,K_6\}$ in $H.$}

If there exists some component $H_j\notin  \{K_4,K_5,K_6\}$, then  $|M_{H_j}|\leq f(n_j)\leq 0.9\cdot g(n_j)$ by Lemma \ref{botool1}.
So,
$$\prod_{i=1}^s |M_{H_{i}}|\leq \prod_{i=1}^s f(n_i)\leq 0.9\cdot \prod_{i=1}^s g(n_i)= 0.9\cdot 10^{\frac{n-6}{5}},$$
$$\prod_{i=1}^s |M_{H_{i}-N_{H_{i}}(u)}|\leq \prod_{i=1}^s g(n_i-1)=10^{\frac{n-6-s}{5}}\leq 10^{\frac{n-8}{5}},$$
as $f(n_i)\leq g(n_i)~(i\neq j)$ and $\sum_{i=1}^s n_i= n-6$.
Consequently, when $n\geq 19$, Ineq.~(\ref{bocpen56}) yields
\begin{equation}\label{bocpen51}
\begin{aligned}
|M_G|&\leq 6f(n-5)+4\cdot \prod_{i=1}^s |M_{H_{i}}|+\prod_{i=1}^s |M_{H_{i}-N_{H_{i}}(u)}|\\
&\leq 6f(n-5)+4\cdot 0.9\cdot 10^{\frac{n-6}{5}}+10^{\frac{n-8}{5}}\\
&=(6\cdot 10^{-\frac{5}{5}}+4\cdot 0.9\cdot 10^{-\frac{5}{5}}+10^{-\frac{7}{5}})\cdot 10^{\frac{n-1}{5}}+6f_1(n-5)\\
&<0.9998\cdot 10^{\frac{n-1}{5}}+f_1(n)< f(n),
\end{aligned}
\end{equation}
where the third inequality follows from   $6f_1(n-5)\leq f_1(n)$.
For $15\leq n\leq 18,$ $|M_G|\leq f(n)$ by  a direct verification.
This completes the proof of Claim 1.

\noindent{\bf Claim 2.} {\it There is no $K_4$ as a component in $H$.}

Assume, for contradiction, that $H$  contains $K_4$ as a  component.
W.l.o.g., let $H_1= K_4$. Then Lemma \ref{ab} forces that $|M_{H_{1}-N_{H_{1}}(u)}|\leq |M_{K_2}|=1.$ Further,
$$\prod_{i=1}^s |M_{H_{i}}|\leq |M_{H_{1}}|\cdot\prod_{i=2}^s f(n_i)\leq  {{4}\choose 2}\cdot\prod_{i=2}^s g(n_i)= 6\cdot 10^{\frac{n-10}{5}},$$
$$\prod_{i=1}^s |M_{H_{i}-N_{H_{i}}(u)}|\leq |M_{H_{1}-N_{H_{1}}(u)}|\cdot\prod_{i=2}^s g(n_i-1)=1\cdot 10^{\frac{n-10-(s-1)}{5}}\leq  10^{\frac{n-11}{5}},$$
as $f(n_i)\leq g(n_i)~(i\geq 2)$, $\sum_{i=2}^s n_i= n-10$  and $s\geq 2$.
So, when $n\geq 19$, Ineq.~(\ref{bocpen56}) yields
\begin{equation*}
\begin{aligned}
|M_G|&\leq 6f(n-5)+4\cdot \prod_{i=1}^s |M_{H_{i}}|+\prod_{i=1}^s |M_{H_{i}-N_{H_{i}}(u)}|\\
&\leq 6f(n-5)+4\cdot 6\cdot 10^{\frac{n-10}{5}}+10^{\frac{n-11}{5}}< f(n),
\end{aligned}
\end{equation*}
where the last inequality follows by the fact that the upper bound in this case
is strictly smaller than the  upper bound of  Ineq.~(\ref{bocpen51}) (i.e., $6f(n-5)+4\cdot 0.9\cdot 10^{\frac{n-6}{5}}+10^{\frac{n-8}{5}}$).
For the same reason, $|M_G|\leq f(n)$ for $15\leq n\leq 18.$
This completes the proof of Claim 2.

\noindent{\bf Claim 3.} {\it There is no $K_6$ as a component in $H$.}

For contradiction, if   $H$ has exactly two components and one of them is  $K_6$.
By Claims 1 and 2, the other component is $K_5$ or $K_6$.   By Corollary \ref{bocor2}, it can be easily checked  that $|M_G|\leq f(n)$.

Now consider the case that $H$ has at least three  components (i.e. $s\geq 3$) and one of them is  $K_6$.
W.l.o.g., let $H_1= K_6$. Then Lemma \ref{ab} forces that $|M_{H_{1}-N_{H_{1}}(u)}|\leq |M_{K_4}|=6.$ Further,
$$\prod_{i=1}^s |M_{H_{i}}|\leq |M_{H_{1}}|\cdot  \prod_{i=2}^s f(n_i)\leq  {{6}\choose 2}\cdot\prod_{i=2}^s g(n_i)= 15\cdot 10^{\frac{n-12}{5}},$$
$$\prod_{i=1}^s |M_{H_{i}-N_{H_{i}}(u)}|\leq |M_{H_{1}-N_{H_{1}}(u)}|\cdot \prod_{i=2}^s g(n_i-1)=6\cdot 10^{\frac{n-12-(s-1)}{5}}\leq 6\cdot 10^{\frac{n-14}{5}},$$
as $f(n_i)\leq g(n_i)~(i\geq 2)$, $s\geq 3$ and $\sum_{i=2}^s n_i= n-12$.
Consequently, when  $n\geq 19$, Ineq.~(\ref{bocpen56}) yields
$$|M_G|\leq 6f(n-5)+4\cdot 15\cdot 10^{\frac{n-12}{5}}+6\cdot 10^{\frac{n-14}{5}}<f(n),$$
where the last inequality follows by the fact that the upper bound in this case
is strictly smaller than the  upper bound of  Ineq.~(\ref{bocpen51}) (i.e., $6f(n-5)+4\cdot 0.9\cdot 10^{\frac{n-6}{5}}+10^{\frac{n-8}{5}}$).
And for the same reason, $|M_G|\leq f(n)$ for $15\leq n\leq 18.$
This completes the proof of Claim 3.

\noindent{\bf Claim 4.} {\it $|M_G|\leq f(n)$.}

Combining Claims $1,2,3$, we  conclude that every component $H_i=K_5~(1\leq i\leq s)$.
Then when $n\geq 36$, Ineq.~(\ref{bocpen56}) yields
\begin{equation*}
\begin{aligned}
|M_G|&\leq 6f(n-5)+4\cdot \prod_{i=1}^s |M_{H_{i}}|+\prod_{i=1}^s |M_{H_{i}-N_{H_{i}}(u)}|\\
&\leq 6f(n-5)+4\cdot 10^{\frac{n-6}{5}}+6^{\frac{n-6}{5}}=f(n),
\end{aligned}
\end{equation*}
as $|M_{H_{i}-N_{H_{i}}(u)}|\leq |M_{K_4}|=6$ for every $1\leq i\leq s.$
For  $15\leq n\leq 35$, combining $|M_G|={{r-1}\choose 2}\cdot |M_{G-K_r}|+\sum_{p\in N(v)}|M_{G-N[v]\cup N[p]}|$  with  $H_i=K_5~(1\leq i\leq s)$, it can be easily checked that $|M_G|\leq f(n)$.
By Claim $4$, the result follows.
\end{proof}

\begin{lemma}   \label{gcut5}
Let $G$ be a minimal counterexample. Suppose that $v$ is a cutpoint of $G$ and that
$G_1$ is a component of $G-v$.
If  $|G_1+v|\leq 5$, then $G_1+v$ is a complete graph.	
\end{lemma}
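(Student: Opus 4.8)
The plan is to argue by contradiction: suppose $G_1+v$ has order $m\le 5$ but is not complete. Since $G$ is a minimal counterexample, Lemma \ref{bolem1} rules out pendant vertices and Corollary \ref{structureb} constrains the degrees of non-cutpoints, so I first want to pin down what $G_1+v$ can look like. Because $v$ is a cutpoint, every vertex of $G_1$ other than possibly a neighbor of $v$ is a non-cutpoint of $G$, hence by Corollary \ref{structureb} has degree between $2$ and $5$ inside $G_1+v$; combined with $m\le 5$ this leaves only a short, explicit list of candidate graphs $G_1+v$ (on $3$, $4$, or $5$ vertices) to examine, and none of them is $K_2$ alone since that would make $v$'s neighbor in $G_1$ a pendant vertex of $G$.

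Next I would use the vertex-deletion machinery of Proposition \ref{bopro1} together with the cutpoint structure. Writing $H = G - (G_1+v) $ reattached appropriately — more precisely, using that $v$ is a cutpoint so $|M_{G-v}| = |M_{G_1}|\cdot |M_{G-G_1-v}|$ and that $G - G_1$ is a connected proper induced subgraph of $G$ (so $|M_{G-G_1}|\le f(n-m+1)$ by Proposition \ref{boproso}) — I can bound $|M_G|$ via $|M_G|\le |M_{G-v}| + \sum_{p\in N(v)} |M_{G-N[v]\cup N[p]}|$. The point is that if $G_1+v$ is not complete, then $|M_{G_1}|$ (and the analogous quantities appearing when we delete closed neighborhoods) is strictly smaller than it would be for the complete graph $K_m$, which is exactly the slack that should force $|M_G|\le f(n)$, contradicting that $G$ is a counterexample. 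For the borderline small cases I expect to invoke the inequalities of Lemma \ref{botool1} (e.g.\ $f(n)/g(n)$ bounds away from $1$) and Lemma \ref{bolemma0011}.

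An alternative and probably cleaner route for most candidates is to use the twin-contraction lemmas: if $G_1+v$ is not complete, there is an edge $xy$ inside $G_1$ with $x,y$ not twins, and one can try to apply Lemma \ref{y1} (or Lemma \ref{y-y1}) to push $G$ toward a graph with a complete pendant-type block without decreasing $|M_G|$, while strictly decreasing $\tau$ by Lemma \ref{twins} — contradicting minimality. I would check the hypothesis of Lemma \ref{y1} (no maximal induced matching confined to $W_1\cup W_4$ meeting $W_1$, and symmetrically for $W_3$) holds because of the small size of $G_1$ and the cutpoint property of $v$. The resulting graph either is handled directly or reduces to a pendant block $K_r$ with $r\in\{3,4,5,6\}$, which Property C (Lemmas \ref{ab}, \ref{bopen5}, and the $K_3$ case) has already excluded.

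\textbf{Main obstacle.} The delicate point is the case analysis when $m=5$ and $G_1+v$ is a dense-but-not-complete graph (e.g.\ $K_5$ minus a perfect-ish matching, or $K_5$ minus an edge): here $|M_{G_1}|$ is only slightly below $|M_{K_5}|=10$, so the crude bound $|M_{G-v}|\le g(n-1)$-style estimates may not leave enough room, and one must instead track precisely how deleting $N[v]\cup N[p]$ interacts with the non-complete structure — or, in the twin-contraction approach, verify carefully that the hypotheses of Lemma \ref{y1} are met and that the contraction genuinely lands in an already-forbidden configuration rather than creating a new borderline case. I expect this $m=5$ subcase to absorb most of the work, with $m\le 4$ being quick.
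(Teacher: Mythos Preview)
Your plan has the right instinct (exploit that completing $G_1+v$ can only help), but the execution is pointed at the wrong contradiction, and there is a circularity you should fix.

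\textbf{Wrong target in Route 1.} You propose to use the slack from $G_1+v$ being non-complete to force $|M_G|\le f(n)$. That will not work: $G$ \emph{is} a counterexample, so $|M_G|>f(n)$ by hypothesis, and nothing about a small local defect in $G_1$ lets you bound $|M_G|$ below $f(n)$ in general. The paper uses the same slack, but in the opposite direction: it builds $G'$ from $G$ by replacing $G_1+v$ with $K_r$ (keeping $G_2=G-G_1-v$ and the attachment at $v$), and uses the cutpoint factorization
\[
|M_G|\le |M_{G_1}|\cdot |M_{G_2}|+|M_{G_1+v}(\emptyset,\{v\})|\cdot |M_{G_2-N_{G_2}(v)}|+|M_{G_2+v}(\emptyset,\{v\})|\cdot |M_{G_1-N_{G_1}(v)}|
\]
to see that all three $G_1$-dependent factors are maximized (and the inequality becomes an equality via Corollary~\ref{bocor2}) when $G_1+v=K_r$. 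Hence $|M_{G'}|\ge |M_G|>f(n)$ while $\tau(G')<\tau(G)$, contradicting the \emph{minimality} of $G$, not its status as a counterexample. This one-shot replacement is considerably cleaner than either direct counting or repeated twin-contractions; in particular it never touches $f(n)$ or Lemma~\ref{botool1}/\ref{bolemma0011}.

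\textbf{Circularity with Property C.} You plan to finish by landing on a pendant $K_r$ with $r\in\{3,4,5,6\}$ and invoking Property~C. But Lemma~\ref{gcut5} sits \emph{before} Property~C in the logical order: Lemmas~\ref{gpb3}, \ref{gcut6}, \ref{bopen06}, \ref{bopen05}, \ref{bopen004} all use \ref{gcut5} in their proofs. At this point only Lemmas~\ref{ab} and \ref{bopen5} are available (degree lower bounds on the cutpoint of a pendant $K_4,K_5,K_6$), not the exclusion of those blocks. The paper's only appeal to earlier lemmas is in the residual case $r=5$, $d_{G_1}(v)=1$: there the unique neighbour $w\in G_1$ of $v$ is itself a cutpoint, the already-proved replacement argument applied to $w$ forces $G_1=K_4$, and then $d_G(w)=4$ contradicts Lemma~\ref{ab}. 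Your twin-contraction alternative could be made to work, but you would still need to terminate without Property~C; the replacement argument avoids this entirely and handles every case except the single $(r,d_{G_1}(v))=(5,1)$ subcase uniformly.
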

\begin{proof} 
Let $G_2=G-G_1-v$.
Set $|G_1+v|=r~(\leq 5)$.  Since $G$ contains no pendant vertices by Lemma \ref{bolem1},   $3\le r\le 5$.
Note that $v$ is a cutpoint, then by Proposition \ref{bopro1},
\begin{equation}\label{gbocom25}
\begin{aligned}
|M_G|& \leq |M_{G-v}|+\sum_{p\in N(v)}|M_{G-N[v]\cup N[p]}|\\
&\leq |M_{G_1}|\cdot |M_{G_2}|+|M_{G_1+v}(\emptyset,\{v\})|\cdot |M_{G_2-N_{G_2}(v)}|+|M_{G_2+v}(\emptyset,\{v\})|\cdot |M_{G_1-N_{G_1}(v)}|.
\end{aligned}
\end{equation}
It is easy to check that $|M_{G_1+v}(\emptyset,\{v\})|=d_{G_1}(v)$  and $|M_{G_1-N_{G_1}(v)}|=1$ 
if either $r\le 4$ or $r=5$
and $d_{G_1}(v)\ge 2$.
Then, by Ineq. (\ref{gbocom25}), to maximize $|M_G|$ and minimize $\tau(G)$,  $G_1+v=K_r$. Otherwise, if $G_1+v\neq K_r$ in $G$, then we can construct the (connected) graph $G'$ obtained from $G$ by just transforming $G_1+v$ into a complete subgraph.
Since $|M_{G'}|>|M_G|$ and $\tau(G')< \tau(G)$, contradicting the choice of $G$.

It remains to exclude the leaving case where  $r=5$
and $d_{G_1}(v)=1$. For $r=5$ and $d_{G_1}(v)=1$, let $N_{G_1}(v)=w.$ Note that now $w$ is a cutpoint of $G$. Then applying the similar discussion to $w$ as above, to maximize
$|M_G|$ and minimize $\tau(G)$,  $G_1=K_4$. However, if $G_1=K_4$ and $d_{G_1}(v)=1$, then a contradiction to Lemma \ref{ab}
will be yielded. This completes the proof.
\end{proof}
Applying all those lemmas above, we below present  Property C step by step.
\begin{lemma}\label{gpb3}
Let $G$ be a minimal counterexample.  Then $G$ does not have a pendant block $K_3$.
\end{lemma}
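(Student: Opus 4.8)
\textbf{Proof proposal for Lemma \ref{gpb3}.}

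The plan is to argue by contradiction: suppose the minimal counterexample $G$ has a pendant block $K_3$ with cutpoint $v$. The core idea is that a pendant triangle contributes very little to $|M_G|$ relative to its three vertices, so replacing it by something denser (or re-routing $v$'s edges) must not decrease the count of maximal induced matchings while it decreases $\tau$, contradicting minimality. First I would set up notation: let $K_3=\{v,x,y\}$ be the pendant block with cutpoint $v$, and write $G-K_3$ for $G$ with $x,y$ (and the triangle edges) removed. The key structural observation is that $x$ and $y$ are a pair of false twins of $G-y$ and $G-x$ respectively, or more directly, that in any maximal induced matching $M$ of $G$ the edge $xy$ behaves in a very constrained way: either $M$ covers $v$ by an edge going into $G-K_3$ (and then $M$ cannot use $xy$, because $x,y$ would be dominated — wait, $x,y$ have no other neighbors, so $M\cup\{xy\}$ would still be induced, forcing $xy\in M$, a contradiction), or $M$ does not cover $v$, and then $M$ must contain $xy$ to be maximal.

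So the first step would be to prove the clean dichotomy: every $M\in M_G$ either (i) covers $v$ with an edge $vp$, $p\in N(v)\setminus\{x,y\}$, in which case $M$ restricted to $G-K_3$ — wait, we also need $x,y$ handled: since $v$ is covered, neither $x$ nor $y$ is covered and $\{xy\}$ could be added unless forbidden, but $x,y$ are only adjacent to $v$, so nothing forbids it, contradiction. Hence case (i) is actually \emph{impossible}: $v$ can never be covered by an edge leaving the triangle. Therefore every $M\in M_G$ contains exactly one of $vx$, $vy$, $xy$, and in the first two cases $M$ covers $v$ within the triangle. This gives a bijection-type count: $|M_G| = 2\cdot|M_{G-K_3}(\{v\},\emptyset)'| + |M_{G-K_3}\text{-type counts}|$. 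Concretely, if $M$ uses $xy$, then $M\setminus\{xy\}$ must be a maximal induced matching of $G-\{x,y\}$ that does \emph{not} cover $v$; if $M$ uses $vx$ (resp. $vy$), then $M\setminus\{vx\}$ is an induced matching of $G-K_3$ covering none of $N[v]\cap V(G-K_3)$, and it must be maximal there after deleting $N[v]$. Organizing this with Proposition \ref{bopro1} applied at $v$, together with Corollary \ref{bocor2} if applicable, I expect to get an identity of the form $|M_G| = 3\,|M_{G-N[v]}| + (\text{small correction}) \le 3\,g(n-d(v)-1)+\cdots$, which by Lemmas \ref{gong3}, \ref{botool1} and \ref{bolemma0011} is strictly below $f(n)$ once $d(v)$ is bounded appropriately — and $d(v)\le 5$ would follow from Corollary \ref{structureb} if $v$ is a non-cutpoint, but $v$ \emph{is} a cutpoint, so instead I would split on $d(v)\le 5$ versus $d(v)\ge 6$ and in the latter case invoke Lemma \ref{bolem6c} (automatic) plus the disconnectedness from Lemma \ref{bo1}-style reasoning.

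Alternatively — and this is probably the cleaner route matching the paper's style — I would use the twin-pushing machinery: apply Lemma \ref{y1}/\ref{y-y1} or the false-twin Lemma \ref{false} to the pair $x,y$ inside the pendant triangle, or transform the triangle and $v$'s attachment so as to create a larger pendant complete block or merge components, showing $\tau$ strictly drops while $|M|$ does not. Since a pendant $K_3$ gives only the factor ${2\choose 2}=1$ in Corollary \ref{bocor2} — i.e. the triangle contributes essentially nothing multiplicatively — one can profitably delete one vertex of the triangle (say $y$) to get $G-y$ of order $n-1$; by minimality $|M_{G-y}|\le f(n-1)$, and then bound $|M_G|\le |M_{G-y}| + \alpha(y;G) \le f(n-1)+(\text{number of maximal IMs through }y)$. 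Since $y$ has degree $2$ (neighbors $v,x$), $\alpha(y;G)$ is at most $|M_{G-N[y]\cup N[v]}|+|M_{G-N[y]\cup N[x]}| \le 2g(n-3)$ or so, and $f(n-1)+2g(n-3) < f(n)$ by the inequalities in Lemma \ref{botool1}. The main obstacle I anticipate is the bookkeeping when $v$ has large degree and $G-N[v]$ (or $G-K_3$) splits into many components: there one must use the product structure $\prod f(n_i)\le \frac{f(4)}{g(4)}\prod g(n_i)$ trick exactly as in Lemmas \ref{ab} and \ref{bopen5}, and verify the resulting polynomial-in-$10^{(n-1)/5}$ coefficient is $<1$ while the $f_1$-part obeys $f_1(n-1)+\cdots<f_1(n)$; the small cases $15\le n\le 18$ (or wherever the asymptotic estimate fails) will need a separate direct check, as elsewhere in the paper.
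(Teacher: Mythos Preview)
Your proposal has two concrete errors that prevent it from going through.

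First, the ``clean dichotomy'' is wrong. If the cutpoint (your $v$) is covered by an edge $vp$ with $p\notin\{x,y\}$, then $xy$ \emph{cannot} be added to $M$: the vertex $x$ is adjacent to $v\in V(M)$, so $V(M)\cup\{x,y\}$ does not induce a $1$-regular subgraph. Thus case~(i) is perfectly possible, and the trichotomy ``every $M$ contains one of $vx,vy,xy$'' fails. This undermines the structural count you build on it.

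Second, the fallback bound you propose, $|M_G|\le f(n-1)+2g(n-3)$ (from deleting a degree-$2$ vertex $y$), is numerically too weak: the leading coefficient is $10^{-1/5}+2\cdot 10^{-2/5}\approx 1.43$, far above $1$, so this does not give $|M_G|<f(n)$. Deleting a degree-$2$ vertex of the triangle loses too much, because $|M_{G-N[y]\cup N[x]}|=|M_{G-K_3}|$ can be as large as $g(n-3)$.

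The paper's argument instead partitions at the \emph{cutpoint} $w$. The crucial observation (which your dichotomy attempt almost reached, from the other side) is that if $w$ is \emph{not} covered then the triangle edge between the two degree-$2$ vertices must lie in $M$, and stripping it yields a maximal induced matching of $G-K_3$; hence $|M_G(\{w\},\emptyset)|\le g(n-3)$. For $|M_G(\emptyset,\{w\})|$ one uses the standard sum $\sum_{p\in N(w)}|M_{G-N[w]\cup N[p]}|$ and Lemma~\ref{gong3}; since at least one neighbor $p$ has $N[p]\not\subseteq N[w]$ (Lemma~\ref{dn-1}), this is at most $(d(w)-1)g(n-1-d(w))+g(n-2-d(w))$, which for $d(w)\ge 4$ is $\le 3g(n-5)+g(n-6)$. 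The case $d(w)=3$ is handled separately: then $w$'s unique outside neighbor is a cutpoint, and Lemma~\ref{gcut5} forces the four vertices to induce $K_4$, a contradiction. Summing gives $g(n-3)+3g(n-5)+g(n-6)<0.974\cdot 10^{(n-1)/5}<f(n)$.
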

\begin{proof}  Assume to the contrary that $G[\{u,v,w\}]=K_3$ with $d(u)=d(v)=2$ and $d(w)\ge 3$.
If $d(w)=3,$ then let $N_{G-K_3}(w)=\{x\}.$ Clearly,  $x$ is a cutpoint. Then $G[\{u,v,w,x\}]= K_4$ by Lemma \ref{gcut5},   a contradiction. So we have $d(w)\geq 4.$
Lemma \ref{dn-1} forces that there is a neighbor $p\in N(w)$ such that
$N[p]\backslash N[w] \neq \emptyset$.
Consequently,
\begin{align*}
|M_G|&= |M_G(\{w\},\emptyset)|+ |M_G(\emptyset,\{w\})|\\& \leq g(n-3)+(d(w)-1)g(n-1-d(w))+g(n-2-d(w))\\
& \leq g(n-3)+3g(n-5)+g(n-6)\\
& < 0.9736\cdot 10^\frac{n-1}{5} < f(n),
\end{align*}
where the second inequality follows from Lemma \ref{gong3}, a contradiction. The result follows.
\end{proof}

\begin{lemma}   \label{gcut6}
Let $G$ be a minimal counterexample. Suppose that $v$ is a cutpoint of $G$ and that
$G_1$ is a component of $G-v$.
If  $|G_1+v|=6$, then $G_1+v= K_6$. 
\end{lemma}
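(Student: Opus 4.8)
The plan is to mimic the structure of Lemma \ref{gpb3} and of the pendant-$K_5$ analysis in Lemma \ref{bopen5}: assume for contradiction that $G$ has a cutpoint $v$ with a component $G_1$ of $G-v$ such that $|G_1+v|=6$, and derive $G_1+v=K_6$. Write $G_2=G-G_1-v$, so that $v$ is a cutpoint separating $V(G_1)$ from $V(G_2)$. First I would dispose of the possibility $d_{G_1}(v)=1$: in that case the unique neighbour $w$ of $v$ in $G_1$ is itself a cutpoint, and $G_1-w$ together with the pendant structure would force (after applying Lemma \ref{gcut5} to $w$, since $|G_1-w+w| = |G_1|=6$ is too big, but $G_1$ itself is a $5$-vertex block attached at $w$) a pendant block $K_4$ or $K_5$ with cutpoint $w$ of degree too small, contradicting Lemma \ref{ab} or Lemma \ref{bopen5}; more directly, if $G_1+v$ has $v$ of degree $1$ inside it, then $G_1$ is a connected graph on $5$ vertices attached by a single edge, and repeating the argument of Lemma \ref{gcut5} shows $G_1=K_5$, so $G$ has a pendant block $K_5$ with cutpoint $w$ where $d(w)$ is at most $1+(\text{degree of }w\text{ in }G_1)$; one then checks this against Lemma \ref{bopen5}. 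So I may assume $d_{G_1}(v)\ge 2$.

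Next, the core computation. Using that $v$ is a cutpoint, Proposition \ref{bopro1} gives, exactly as in Ineq.~(\ref{gbocom25}),
\begin{equation*}
|M_G|\leq |M_{G_1}|\cdot|M_{G_2}|+|M_{G_1+v}(\emptyset,\{v\})|\cdot|M_{G_2-N_{G_2}(v)}|+|M_{G_2+v}(\emptyset,\{v\})|\cdot|M_{G_1-N_{G_1}(v)}|.
\end{equation*}
Here $|M_{G_1}|\le g(6)=10^{6/5}$, with equality only if $G_1=K_5\cup K_1$, which is disconnected — but $G_1$ is a component of $G-v$, hence connected, so actually $|M_{G_1}|\le f(6)=15$ (Proposition \ref{boproso}); similarly $|M_{G_1+v}(\emptyset,\{v\})|\le$ the number of edges at $v$ in $G_1+v$, which is $d_{G_1}(v)\le 5$, and $|M_{G_1-N_{G_1}(v)}|\le g(6-1-d_{G_1}(v))$ — when $d_{G_1}(v)\ge 2$ this is $g(3)$ or smaller, and in fact, since $G_1+v=K_6$ is exactly what we want to prove, the strategy is: if $G_1+v\ne K_6$ then each of these quantities is strictly dominated by its value for $K_6$, so replacing $G_1+v$ by $K_6$ (forming a connected graph $G'$ with $|V(G')|=n$ and $\tau(G')<\tau(G)$ by Lemma \ref{twins}) gives $|M_{G'}|\ge|M_G|>f(n)$, contradicting minimality of $G$. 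The delicate point, just as in Lemma \ref{gcut5}, is that the three terms are not simultaneously monotone under "completing" $G_1+v$ unless one knows $|M_{G_1-N_{G_1}(v)}|=1$; for $|G_1+v|=6$ this holds when $d_{G_1}(v)\ge 4$ but not necessarily when $d_{G_1}(v)\in\{2,3\}$. So I would split on $d_{G_1}(v)$.

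For $d_{G_1}(v)\ge 4$ the substitution argument above works verbatim. For $d_{G_1}(v)\in\{2,3\}$ I would instead bound $|M_G|$ absolutely: $|M_{G_1}|\le f(6)=15$, $|M_{G_2}|\le g(n-6)$ (Proposition \ref{boproso}, first part), $|M_{G_2-N_{G_2}(v)}|\le g(n-7)$, $|M_{G_1+v}(\emptyset,\{v\})|=d_{G_1}(v)\le 3$, $|M_{G_2+v}(\emptyset,\{v\})|=\sum_{p\in N_{G_2}(v)}|M_{G_2-N_{G_2}[v]\cup N_{G_2}[p]}|\le (d_{G_2}(v))\cdot g(n-6-d_{G_2}(v)-1)$ bounded via Lemma \ref{gong3} by the worst case, and $|M_{G_1-N_{G_1}(v)}|\le g(6-1-d_{G_1}(v))$. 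Plugging in and using the Lemma \ref{bolemma0011}-type estimates together with $f(n)=10^{(n-1)/5}+f_1(n)$ and $6f_1(n-5)\le f_1(n)$ (and for small $n$, $15\le n\le$ some bound, a direct check), one gets $|M_G|<f(n)$, contradicting that $G$ is a counterexample. Collecting both ranges of $d_{G_1}(v)$ forces $d_{G_1}(v)\ge 4$ and $G_1+v=K_6$. I expect the main obstacle to be the $d_{G_1}(v)\in\{2,3\}$ case: there one has to be careful that $|M_{G_2+v}(\emptyset,\{v\})|$ can be as large as roughly $g(n-6)$ if $v$ has high degree into $G_2$, so the bound on $|M_{G_1-N_{G_1}(v)}|$ (which is small, $\le g(2)=1$ when $d_{G_1}(v)=3$, and $\le g(3)$ when $d_{G_1}(v)=2$) must be used to kill that term, and one must double-check the arithmetic so that the total stays below $f(n)$ for all $n\ge 15$, handling the transitional small-$n$ range by explicit computation as the paper does elsewhere.
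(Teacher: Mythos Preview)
Your substitution argument for large $d_{G_1}(v)$ and your treatment of $d_{G_1}(v)=1$ are essentially what the paper does (though note that $|G_1-N_{G_1}(v)|\le 2$ already when $d_{G_1}(v)\ge 3$, so $|M_{G_1-N_{G_1}(v)}|=1$ there too; the substitution works for $d_{G_1}(v)\ge 3$, not only $\ge 4$). The real gap is the case $d_{G_1}(v)=2$, where your proposed absolute bound cannot close.

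Concretely: with $d_{G_1}(v)=2$ and $G_1=K_5$ (which you cannot exclude), the first term in (\ref{gbocom25}) is $|M_{G_1}|\cdot|M_{G_2}|$, and since at this point only Lemmas~\ref{gpb3}, \ref{ab}, \ref{bopen5} are available, nothing prevents $|M_{G_2}|$ from being essentially $g(n-6)$ (think of $G_2$ close to a disjoint union of $K_5$'s attached to $v$). That single term is then $10\cdot g(n-6)=10^{(n-1)/5}$. The second term is at least of order $g(n-7)$, and $|M_{G_1-N_{G_1}(v)}|=|M_{K_3}|=3$ is \emph{not} small enough to ``kill'' the third term: your crude estimate gives something like
\[
10^{(n-1)/5}+c_1\,g(n-7)+c_2\,g(n-8)\;\approx\;1.3\cdot 10^{(n-1)/5},
\]
which is far above $f(n)=10^{(n-1)/5}+f_1(n)$ since $f_1(n)=O(n\cdot 6^{n/5})$ is negligible against $10^{(n-7)/5}$. (Also, your claim $|M_{G_1+v}(\emptyset,\{v\})|=d_{G_1}(v)$ is not valid for $|G_1+v|=6$ with $d_{G_1}(v)=2$; the residual graph after deleting $N[v]\cup N[p]$ can have three vertices.)

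The paper avoids this entirely: for $d_{G_1}(v)=2$ it first replaces $G_1$ by $K_5$ (keeping $d_{G_1}(v)=2$), and then applies the twin operation of Lemma~\ref{y1} to an edge $ux$ with $u\in N_{G_1}(v)$ and $x\in V(G_1)\setminus N_{G_1}(v)$. One of $G_{u\to x}$, $G_{x\to u}$ is again a minimal counterexample, and in those graphs either a pendant $K_5$ appears whose cutpoint has degree~$5$ (contradicting Lemma~\ref{bopen5}), or $d_{G_1}(v)$ has increased to~$3$, reducing to the already-handled case. You need this structural move (or something equivalent); a pure counting bound will not do here.
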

\begin{proof}  We divide the proof into two cases.

\noindent{\bf Case 1.} {\it $d_{G_1}(v)=1$.}

Let $N_{G_1}(v)=\{w\}$. Then  $w$ is also a cutpoint of $G$. Applying Lemma \ref{gcut5} to $w$,   either $G_1=K_5$ or $G_1$ is the graph of order $5$ obtained by identifying one vertex of two triangles  where $d_{G_1}(w)=4$, as $|G_1|=5$.
This contradicts  Lemma \ref{bopen5} and Lemma \ref{gpb3}, respectively.

\noindent{\bf Case 2.} {\it $d_{G_1}(v)\ge 2$.}

When $|G_1+v|=6$ and $d_{G_1}(v)\ge 3$,  it is easy to see that $|M_{G_1}(\emptyset,\{v\})|=d_{G_1}(v)$ and $|M_{G_1-N_{G_1}(v)}|=1$.
Then, applying Corollary \ref{bocor2} and Ineq. (\ref{gbocom25}), to maximize $|M_G|$ and minimize $\tau(G)$, $G_1=K_{5}$ and $d_{G_1}(v)= 5.$
Consequently, $G_1+v=K_6$. Otherwise, if $G_1+v\neq K_6$ in $G$, then we can construct the (connected) graph $G'$ obtained from $G$ by just transforming $G_1+v$ into $K_6$.
Since $|M_{G'}|>|M_G|$ and $\tau(G')< \tau(G)$, contradicting the choice of $G$.

To prove this lemma, it suffices to exclude the case that  $d_{G_1}(v)=2$.
When $d_{G_1}(v)=2$, set $N_{G_1}(v)=\{u,w\}.$ 
Observe that $|M_{G_1-N_{G_1}(v)}|\le 3$  equality holds iff $G_1-N_{G_1}(v)= K_3$.
Similar to the discussion above,  to maximize $|M_G|$ and minimize $\tau(G)$, $G_1= K_5$.
Let $x\in V(G_1)\setminus \{u,w\}.$    One can verify that the edge $ux$ satisfies the requirement of Lemma  \ref{y1}. Thus  we obtain that either
$|M_{G_{u\rightarrow x}}|\geq|M_G|$ and $\tau(G_{u\rightarrow x})=\tau(G)$ or $|M_{G_{x\rightarrow u}}|\geq|M_G|$ and $\tau(G_{x\rightarrow u})=\tau(G)$. This implies that either  $G_{u\rightarrow x}$ or
$G_{x\rightarrow u}$ is also a minimal counterexample.
For $G_{u\rightarrow x}$, there is a pendant block $K_5$ with cutpoint $w$ satisfying $d_G(w)=5$, contradicting Lemma \ref{bopen5}. For $G_{x\rightarrow u}$, note that $d_{G_1}(v)=3$ and thus $G_1+v=K_6$ by the discussion above, a contradiction.  Both contradictions imply that $d_{G_1}(v)\ne 2$.
The result follows.
\end{proof}


\begin{lemma} \label{bopen06}
Let $G$ be a minimal counterexample.  Then $G$ does not have a pendant   block $K_6$.
\end{lemma}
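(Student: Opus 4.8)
The plan is to argue by contradiction in the manner of Lemma~\ref{bopen5}. Suppose the minimal counterexample $G$ has a pendant block $K_6$ with cutpoint $v$. By Lemma~\ref{ab}, $d(v)\geq 7$, so $v$ has at least two neighbours outside $K_6$, and by Lemma~\ref{bo1}~(1) the subgraph $H:=G-V(K_6)$ is disconnected; write $H_1,\dots,H_s$ $(s\geq 2)$ for its components, $n_i:=|V(H_i)|$, $\sum_i n_i=n-6$. Connectivity of $G$ forces $v$ to have a neighbour in each $H_i$, so $d(v)=5+\sum_i d_{H_i}(v)\geq 5+s$, and Corollary~\ref{bocor2} gives
\[
|M_G|=\binom52|M_{G-K_6}|+\sum_{p\in N(v)}|M_{G-N[v]\cup N[p]}|=10\prod_{i=1}^{s}|M_{H_i}|+\sum_{p\in N(v)}|M_{G-N[v]\cup N[p]}|.
\]

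The first thing I would establish is a dichotomy for the components. If $n_i\leq 5$ then, since $H_i$ is a component of $G-v$ with $|H_i+v|\leq 6$, Lemmas~\ref{gcut5}--\ref{gcut6} force $H_i+v$ to be complete; in particular $v$ is adjacent to \emph{all} of $H_i$, so $d_{H_i}(v)=n_i$, and moreover $n_i\neq 2$ (else $H_i+v=K_3$ is a pendant block, contradicting Lemma~\ref{gpb3}). Consequently, if every $n_i\leq 5$ then $d(v)=5+\sum_i n_i=n-1$, contradicting Lemma~\ref{dn-1}; hence \emph{some} $n_j\geq 6$. The same computation shows that any component with $d_{H_i}(v)\leq 2$ must have $n_i\geq 6$. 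Finally, for a component with $n_i=6$ one checks that $H_i\neq K_6$ (if $d_{H_i}(v)=1$ then $K_6$ would be a pendant block of $G$ whose cutpoint has degree $6$, contradicting Lemma~\ref{ab}; the remaining possibilities put $d(v)$ so high that the neighbourhood sum below becomes negligible anyway), and then the block structure of a connected, pendant-vertex-free, pendant-$K_3$-free, non-$K_6$ graph on $6$ vertices — together with Lemmas~\ref{gcut5}, \ref{gpb3}, \ref{bopen5} applied to its own pendant blocks — pins $H_i$ down (a $K_4$ and a $K_3$ meeting in one vertex, the vertex joined to $v$ lying in the $K_3$) and yields $|M_{H_i}|\leq 14<g(6)$.

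Next I would bound the two summands, distinguishing cases by $d(v)$. The five neighbours of $v$ inside $K_6$ satisfy $N[p]\subseteq N[v]$, hence each contributes $|M_{G-N[v]}|$; the other $d(v)-5$ lie in the $H_i$, and at least one of them has a neighbour outside $N[v]$ by Lemma~\ref{dn-1}, so by Lemma~\ref{gong3},
\[
\sum_{p\in N(v)}|M_{G-N[v]\cup N[p]}|\leq (d(v)-1)g(n-1-d(v))+g(n-2-d(v))=p(d(v))\leq p(7)=6g(n-8)+g(n-9).
\]
For $\prod_i|M_{H_i}|$ I would use $|M_{H_i}|\leq g(n_i)$ in general, $|M_{H_i}|\leq 14\leq\tfrac{14}{g(6)}g(n_i)$ when $n_i=6$, and $|M_{H_i}|\leq f(n_i)\leq\tfrac{f(10)}{g(10)}g(n_i)$ when $n_i\geq 7$ (Lemma~\ref{botool1}(3)); since some $n_j\geq 6$, this already gives $10\prod_i|M_{H_i}|\leq \tfrac{14}{g(6)}\cdot 10^{(n-1)/5}<10^{(n-1)/5}$. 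When $d(v)\geq 8$, or when $d(v)=7$ and one of the (then exactly two) components has order $\geq 7$, the neighbourhood sum is small enough against the slack left by this bound, and one obtains $|M_G|<f(n)$ directly; and whenever the product bound is close to its extreme value $\tfrac{14}{g(6)}g(n-6)$ (one big component of order $6$, the rest complete), $d(v)$ is forced to be of order $n$, so the neighbourhood sum is then only polynomial and again $|M_G|<f(n)$. When $d(v)=7$ and both components have order $6$ — which forces $n=18$ — and in the finitely many remaining small values of $n$, the estimate is finished by direct computation (e.g.\ at $n=18$ one gets $|M_G|\leq 10\cdot14\cdot14+\bigl(5\cdot7\cdot7+2\cdot3\cdot7\bigr)<f(18)$); throughout, the inequalities of Lemma~\ref{bolemma0011} are invoked to absorb the $f_1$-part of $f(n)$. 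This contradicts $|M_G|>f(n)$.

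The step I expect to be the main obstacle is closing the arithmetic when $d(v)$ is small, above all $d(v)=7$: then the neighbourhood sum is of the same order $10^{(n-1)/5}$ as the dominant term $10\prod_i|M_{H_i}|$, and the crude bound $|M_{H_i}|\leq g(n_i)$ is too weak, so one genuinely needs that both components of $G-V(K_6)$ then have at least $6$ vertices, that neither of them can be $K_6$, and — when one of them has order exactly $6$ — its forced structure (only $14$ maximal induced matchings, with the deletions $H_i-a_i$ and $H_i-N[a_i]$ even smaller). Nailing down these structural exclusions and juggling the resulting products so that the total stays below $10^{(n-1)/5}+f_1(n)$ is the delicate part.
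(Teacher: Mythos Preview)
Your route diverges from the paper's at the very first move: the paper does \emph{not} expand around the cutpoint $v$ via Corollary~\ref{bocor2}, but instead picks a non-cutpoint $w$ of the pendant $K_6$ and uses
\[
|M_G|\le |M_{G-w}|+\sum_{p\in N(w)}|M_{G-N[w]\cup N[p]}|\le f(n-1)+4\,|M_H|+|M_{H-N_H(v)}|.
\]
Because $G-w$ is connected, the term $f(n-1)$ (leading part $10^{(n-2)/5}\approx0.631\cdot10^{(n-1)/5}$) comes for free, and the coefficient on the product $\prod_i|M_{H_i}|$ is $4$, not $10$. The paper then shows, via Lemma~\ref{bolem6c} (no non-cutpoint of degree $6$) and Lemma~\ref{false} (no false twins), that no component of $H$ is $K_6$ or $K_6-e$, and separately that no component has order $4$ or $5$; hence every component satisfies $|M_{H_i}|\le0.85\,g(n_i)$, and the inequality closes with room to spare for all $s\ge2$.

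Your proposal has a genuine arithmetic gap. You bound $10\prod_i|M_{H_i}|$ using only a \emph{single} factor $14/g(6)\approx0.883$, coming from the one guaranteed component with $n_j\ge6$, and then assert that for $d(v)\ge8$ the neighbourhood sum fits in the slack $1-0.883\approx0.117$. It does not: $p(8)=7g(n-9)+g(n-10)\approx0.192\cdot10^{(n-1)/5}$ and $p(9)\approx0.137\cdot10^{(n-1)/5}$, so your bound exceeds $10^{(n-1)/5}$ for all $d(v)\le9$. To close you would need a factor $<1$ from \emph{every} component, which is exactly the content of the paper's Claims~1--3 (and requires the two lemmas you never invoke). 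Separately, your structural claim about order-$6$ components is incorrect: the graph ``$K_4$ and $K_3$ meeting in one vertex, with $v$ joined to a $K_3$-vertex'' has $H_i-w_i$ connected of order $5$, so Lemma~\ref{gcut6} would force $H_i=K_6$, a contradiction --- in fact your own block argument, pushed one step further (components of $H_i-w_i$ each of size $\ge3$ summing to $5$), shows that $n_i=6$ with $d_{H_i}(v)=1$ is simply impossible, so the $n=18$ subcase you compute never arises.
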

\begin{proof}
For contradiction, let $K_6$ be a pendant   block  with cutpoint $v$.
By Lemma \ref{ab}, it suffices to exclude the case that $d(v)\geq 7$.
By Lemma \ref{bo1} $(1)$,  $G-K_6$ is disconnected. Let $H_1,H_2,\ldots, H_s$ be all components of  $H=G-K_6$ with $|V(H_i)|=n_i$ and $s\geq 2$. To find an upper bound on $|M_G|$, we first  confine the components of $H$ as follows.

\noindent{\bf Claim 1.} {\it There is neither  $K_6$ nor $K_6-e$ as a component in $H.$}

If not, assume that $H_1= K_6$. Lemma \ref{ab} forces that $v$ is adjacent to at least two vertices of $H_1$, then there are non-cutpoints of degree $6$ in $H_1$, which  contradicts
Lemma \ref{bolem6c}.

Now assume that  $H_1= K_6-e$, where $e=xy$. If $v$ is adjacent to exactly one vertex, say $h_1$, of $H_1$, then $h_1$ is also a cutpoint of  $G$. Lemma \ref{gcut6} shows that $H_1= K_6,$ a contradiction.
If $v$ is adjacent to at least two vertices of $H_1$, in order to exclude non-cutpoints of degree $6$, then $N_{H_1}(v)=\{x, y\}.$ Then $x$ and $y$ become false twins, which  contradicts
Lemma \ref{false}.

\noindent {\bf Claim 2.} {\it There is no component of order $5$ in $H$.}

For contradiction, w.l.o.g., let  $|H_1|=5,$ then  $v+H_1=K_6$ by  Lemma \ref{gcut6}.
Further, if there is another component, say  $H_i~(i\neq 1)$, such that  $n_i\notin  \{4, 5\}$, then $f(n_i)\leq 0.85g(n_i)$ combining Claim 1 with  Lemma \ref{botool1}.
Let $w(\neq v)$ be a vertex of the pendant block $K_6$ in the condition of  Lemma   \ref{bopen06}.
Now consider the upper bound on $|M_{G}|$ by
\begin{equation} \label{bopen6}
\begin{aligned}
|M_{G}|\leq |M_{G-w}|+\sum_{p\in N(w)}|M_{G-N[w]\cup N[p]}|.
\end{aligned}
\end{equation}
Observe that $|M_{G-w}|\leq f(n-1)$ as $G-w$ is connected. When $p\in N_{K_6}(w)\setminus \{v\}$, it is easy to verify that
$|M_{G-N[w]\cup N[p]}|\leq 0.85\cdot 10^{\frac{n-6}{5}}$.
When $p=v$, $|M_{G-N[w]\cup N[v]}|\leq 10^{\frac{n_2+\cdots +n_s-(s-1)}{5}}\leq 10^{\frac{n-12}{5}}$ as $n_2+\cdots +n_s=n-11$ and $s\geq 2$.
When $n\geq 15$, combining the obtained upper bounds yields
\begin{equation*}
\begin{aligned}
|M_G|&\leq f(n-1)+4\cdot 0.85\cdot 10^{\frac{n-6}{5}}+10^{\frac{n-12}{5}}\\
&=(10^{-\frac{1}{5}}+4\cdot 0.85\cdot 10^{-\frac{5}{5}}+10^{-\frac{11}{5}})\cdot 10^{\frac{n-1}{5}}+ f_1(n-1)\\
&<0.9773\cdot 10^{\frac{n-1}{5}}+f_1(n-1)<f(n),
\end{aligned}
\end{equation*}
a contradiction. 
Now we  conclude that  every component  $H_i~(i\neq 1)$ satisfies $n_i\in  \{4, 5\}.$
Recall that  $v$ is a cutpoint of $G$, then $v+H_i= K_{n_i+1}~(i\neq 1)$ due to Lemmas \ref{gcut5} and \ref{gcut6}.
Combining  $v+H_1=K_6$, we obtain that
$v$ is adjacent to every vertex of $G$, which contradicts Lemma \ref{dn-1}. Claim $2$ follows.

\noindent {\bf Claim 3.} {\it There is no component of order $4$ in $H$.}

Suppose not, let $n_1=4,$ then  $v+H_1= K_5$  by  Lemma \ref{gcut5}.
Further, if there is another component, say  $H_i~(i\neq 1)$, such that
$n_i\neq 4$, then $f(n_i)\leq 0.85g(n_i)$ combining Claims $1,2$ with Lemma \ref{botool1}.
Then Ineq. (\ref{bopen6}) yields
\begin{equation*}
\begin{aligned}
|M_G|& \leq f(n-1)+4\cdot6\cdot 0.85\cdot 10^{\frac{n-10}{5}}+10^{\frac{n-11}{5}}\\
&< 0.9643\cdot 10^{\frac{n-1}{5}}+f_1(n-1)<f(n).
\end{aligned}
\end{equation*}
From this, we  conclude that the order of any component $H_i~(i\neq 1)$ is 4 and
$v+H_i= K_5$ by  Lemma \ref{gcut5}. Note that the degree of cutpoint $v$ is $n-1$, which contradicts Lemma \ref{dn-1}. Claim $3$ follows.

From Claims $1,2,3$ and Lemma \ref{botool1}, it follows that for any  $1\leq i\leq s,$ $f(n_i)\leq 0.85g(n_i)$.
Now consider the upper bound on Ineq. (\ref{bopen6}), for any $p\in N_{K_6}(w)\setminus \{v\}$,
$|M_{G-N[w]\cup N[p]}|\leq 0.85^{s}\cdot 10^{\frac{n-6}{5}}$. When $p=v$,
$|M_{G-N[w]\cup N[p]}|\leq \prod_{i=1}^s g(n_i-1)=10^{\frac{n-6-s}{5}}$.
Putting all this together, we have
\begin{equation*}
\begin{aligned}
|M_G| \leq f(n-1)+4\cdot  0.85^{s}\cdot 10^{\frac{n-6}{5}}+10^{\frac{n-6-s}{5}}
< 0.9598\cdot 10^{\frac{n-1}{5}}+f_1(n-1)<f(n),
\end{aligned}
\end{equation*}
 a contradiction. The result follows.
\end{proof}

\begin{lemma} \label{bopen05}
Let $G$ be a minimal counterexample.  Then $G$ does not have a pendant   block  $K_5$.
\end{lemma}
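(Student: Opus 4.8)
\textbf{Proof proposal for Lemma \ref{bopen05}.}

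The plan is to mimic the structure of the proof of Lemma \ref{bopen06}, adapting the constants to the pendant block $K_5$. Assume for contradiction that $G$ contains a pendant block $K_5$ with cutpoint $v$. By Lemma \ref{bopen5} we already know $d(v)\geq 6$, so it suffices to derive a contradiction under this hypothesis. Since $d(v)\geq 6\geq r+1$ with $r=5$, Lemma \ref{bo1}$(1)$ tells us that $H:=G-K_5$ is disconnected; write $H=H_1\cup H_2\cup\cdots\cup H_s$ with $|V(H_i)|=n_i$, $\sum_{i=1}^s n_i=n-5$ and $s\geq 2$. The first task is to confine the possible components $H_i$ exactly as in Lemma \ref{bopen06}: using Lemma \ref{bolem6c} (no non-cutpoints of degree $6$), Lemma \ref{false} (no false twins with singleton twin sets), Lemma \ref{gcut5} (components of order $\leq 5$ together with $v$ form complete graphs), Lemma \ref{gcut6} (a component of order $6$ with $v$ forms $K_6$), and the already-established Lemmas \ref{gpb3}, \ref{bopen5}, \ref{bopen06} (no pendant $K_3,K_5,K_6$), I would argue successively that $H$ cannot contain a component isomorphic to $K_5$, $K_6$, $K_6-e$, or indeed any component of order $4$ or $5$, and cannot contain $K_6$-like pieces. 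Each such exclusion follows the template: if such a component existed, either a forbidden substructure appears directly, or else $v$ would be forced to dominate all of $G$, contradicting Lemma \ref{dn-1}, or the resulting crude upper bound from Proposition \ref{bopro1} applied at a vertex $w\in V(K_5)\setminus\{v\}$ already falls below $f(n)$.

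Once the components are pinned down so that Lemma \ref{botool1} gives $f(n_i)\leq 0.85\,g(n_i)$ for every $i$ (or a sharper constant when small cases are excluded), I would bound $|M_G|$ via
$$|M_G|\leq |M_{G-w}|+\sum_{p\in N(w)}|M_{G-N[w]\cup N[p]}|,$$
where $w$ is a non-cutpoint in the pendant $K_5$. Here $|M_{G-w}|\leq f(n-1)$ since $G-w$ is connected; for $p\in N_{K_5}(w)\setminus\{v\}$ (there are $3$ such neighbors) we get $|M_{G-N[w]\cup N[p]}|\leq 0.85^{s}\cdot 10^{(n-5)/5}$ because deleting $N[w]$ removes all of $K_5$; and for $p=v$ we get $|M_{G-N[w]\cup N[v]}|\leq \prod_{i}g(n_i-1)=10^{(n-5-s)/5}$. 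Summing and using $s\geq 2$ yields a bound of the shape $(10^{-1/5}+3\cdot 0.85^{s}\cdot 10^{-1}+10^{-(s+1)/5})\cdot 10^{(n-1)/5}+f_1(n-1)$, which I expect to be strictly below $10^{(n-1)/5}+f_1(n)=f(n)$ for all $n\geq 15$ (checking $n\leq$ some small threshold directly), contradicting that $G$ is a counterexample.

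I anticipate that the main obstacle is the case analysis confining the components of $H$: unlike the $K_6$ case, a pendant $K_5$ has cutpoint of degree only $\geq 6$ rather than $\geq 7$, so fewer vertices of $K_5$ are "used up" by the attachment and the slack in the numerical inequalities is thinner. In particular the sub-case where all components $H_i$ are copies of $K_5$ (so that $v+H_i$ might try to be $K_6$) is delicate, since then $v$ would be a high-degree cutpoint and I must rule out $v$ becoming adjacent to every vertex; this likely requires invoking Lemma \ref{dn-1} together with a careful count of $d(v)$, and possibly a twin-contraction move (Lemma \ref{y1} or Lemma \ref{y-y1}) to reduce $\tau(G)$ and reach a contradiction with minimality rather than with the numerical bound. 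The small orders $15\leq n\leq 35$ (where $f_1$ has the $\frac{n+144}{30}$ form) will, as elsewhere in the paper, be dispatched by direct verification using $|M_G|={4\choose 2}|M_{G-K_5}|+\sum_{p\in N(v)}|M_{G-N[v]\cup N[p]}|$ from Corollary \ref{bocor2}.
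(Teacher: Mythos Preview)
Your plan follows the paper's structure closely, but there is a concrete numerical gap. Your final displayed bound contains an arithmetic slip---$3\cdot 0.85^{s}\cdot 10^{(n-5)/5}$ contributes $3\cdot 0.85^{s}\cdot 10^{-4/5}$ to the coefficient of $10^{(n-1)/5}$, not $3\cdot 0.85^{s}\cdot 10^{-1}$---but more importantly, even the corrected coefficient $10^{-1/5}+3\cdot 0.85^{s}\cdot 10^{-4/5}+10^{-(s+4)/5}$ evaluates to approximately $1.038$ when $s=2$, which exceeds $1$. So your estimate does not close when $H=G-K_5$ has exactly two components, and this is precisely the delicate case in the paper's argument. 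The sub-case you flag as worrying (all $H_i\cong K_5$) is in fact already excluded by your own component analysis, since $|H_i|=5$ forces $v+H_i=K_6$ via Lemma~\ref{gcut6}, a pendant $K_6$ forbidden by Lemma~\ref{bopen06}; the genuine obstruction is $s=2$ with larger $H_i$.

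The paper handles $s=2$ by abandoning the decomposition at $w\in V(K_5)\setminus\{v\}$ (which, as you see, lacks slack) and instead decomposing at the cutpoint $v$ via Corollary~\ref{bocor2}, writing $|M_G|={4\choose 2}|M_{G-K_5}|+\sum_{p\in N(v)}|M_{G-N[v]\cup N[p]}|$ and splitting further on $d_i:=|N_{H_i}(v)|$, treating $\min\{d_1,d_2\}=1$ and $\min\{d_1,d_2\}\ge 2$ separately. This recovers enough slack because the contributions from $p\in V(K_5)\setminus\{v\}$ and from $p\in N_{H_i}(v)$ can be bounded more sharply once the $d_i$ are fixed. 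Your proposal does not anticipate this switch. The paper also proves an auxiliary bound $|M_{H_i-N_{H_i}(v)}|\le 0.85\,g(n_i-1)$ (its Claim~4), which feeds into both the $s\ge 3$ and $s=2$ estimates.

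A second, smaller gap: ruling out $K_4$-components of $H$ is not as routine as your template suggests. In the paper this is the longest claim: if $H_1=K_4$ then $v+H_1=K_5$, and one must first show there is no second $K_4$ (via a transformation replacing three pendant $K_5$'s by two pendant $K_7$'s, which increases $|M|$ while decreasing $\tau$, contradicting minimality), and then handle $s\ge 3$ and $s=2$ by separate numerical estimates, again switching to the cutpoint $v$ for $s=2$. You should expect this exclusion to require genuine work rather than follow from the general pattern.
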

\begin{proof}
Similar to the proof of Lemma  \ref{bopen06}.
For contradiction, let $K_5$ be a pendant   block  with cutpoint $v$.
By Lemma \ref{bopen5}, it suffices to exclude the case that $d(v)\geq 6$.
By Lemma \ref{bo1} $(1)$, $G-K_5$ is disconnected. Let $H_1,H_2,\ldots, H_s$ be all components of  $H=G-K_5$ with $|V(H_i)|=n_i$ and $s\geq 2$.

\noindent {\bf Claim 1.} {\it There is neither  $K_6$ nor $K_6-e$ as a component in $H.$}
The proof is the same as the  Claim 1 of Lemma \ref{bopen06}.

\noindent {\bf Claim 2.} {\it There is no component of order $5$ in $H.$}

If not, let $|H_i|=5.$ Note that $v$ is a cutpoint of $G$, then
 $v+H_i=K_6$ by Lemma \ref{gcut6}, which  contradicts Lemma \ref{bopen06}.

\noindent {\bf Claim 3.} {\it There is no  $K_4$ as a component in $H.$}

For contradiction, assume that $H_1= K_4.$ Then $v+H_1= K_5$  by  Lemma \ref{gcut5}.   We claim that there is no more $K_4$ in $H$. Otherwise, suppose $H_2= K_4,$ then $v+H_2= K_5$. Now we can construct a new connected graph  $G'$ obtained from $G$ by just transforming the three pendant $K_5$ into two pendant $K_7$. It is easy to verify that $|M_{G'}|\geq|M_G|$ and $\tau(G')< \tau(G)$, contradicting the choice of $G$.
Now combining   Claims  $1,2$ with Lemma \ref{botool1}, we conclude that for any component $H_i\neq H_1$, $f(n_i)\leq 0.85g(n_i)$.

Let $u\in  N_{K_5}(v)$. Now consider $|M_G|=|M_{G-u}|+\sum_{p\in N(u)}|M_{G-N[u]\cup N[p]}|.$
Clearly, $|M_{G-u}|\leq f(n-1)$. When $p\neq v,$ $|M_{G-N[u]\cup N[p]}|\leq 6\cdot 0.85^{s-1}\cdot g(n-9)$.
When $p=v,$ $|M_{G-N[u]\cup N[p]}|\leq \prod_{i=2}^s g(n_i-1)=10^{\frac{n-8-s}{5}}$.
Putting all this together, we have
\begin{equation*}
\begin{aligned}
|M_G|\leq f(n-1)+3\cdot 6\cdot 0.85^{s-1}\cdot 10^{\frac{n-9}{5}}+10^{\frac{n-8-s}{5}}.
\end{aligned}
\end{equation*}
 When $s\geq 3,$
 \begin{equation*}
\begin{aligned}
|M_G|&\leq  (10^{-\frac{1}{5}}+18\cdot 0.85^{2}\cdot 10^{-\frac{8}{5}}+10^{-\frac{10}{5}})\cdot 10^{\frac{n-1}{5}} +f_1(n-1)\\
&< 0.9677\cdot 10^{\frac{n-1}{5}}+f_1(n-1)<f(n),
\end{aligned}
\end{equation*}
 a contradiction.
Next we exclude $s=2.$  Denote by  $|N_{H_2}(v)|=d~(d\geq 1)$.
Recall that $v+H_1= K_5$ and $|M_G|=|M_{G-v}|+\sum_{p\in N(v)}|M_{G-N[v]\cup N[p]}|$ by Corollary \ref{bocor2}.
It is easy to see that $|M_{G-v}|\leq 6\cdot 6\cdot 0.85g(n_2)$.
If $p\in N_{K_5}(v),$ $|M_{G-N[v]\cup N[p]}|\leq g(n_2-d)$.
If $p\in N_{H_2}(v),$ $\sum_{p\in N_{H_2}(v)}|M_{G-N[v]\cup N[p]}|\leq (d-1)\cdot g(n_2-d)+g(n_2-d-1)$.
Hence we find that
\begin{equation*}
\begin{aligned}
|M_G|&\leq 6\cdot 6\cdot 0.85g(n_2)+8\cdot g(n_2-d)+(d-1)\cdot g(n_2-d)+g(n_2-d-1)\\
&=30.6\cdot 10^{\frac{n-9}{5}}+(d+7+10^{-\frac{1}{5}})\cdot 10^{\frac{n-9-d}{5}}\\
&\leq (30.6\cdot 10^{-\frac{8}{5}}+(8+10^{-\frac{1}{5}})\cdot 10^{-\frac{9}{5}})\cdot 10^{\frac{n-1}{5}}\\
&\leq 0.9055\cdot 10^{\frac{n-1}{5}}<10^{\frac{n-1}{5}}+f_1(n)=f(n),
\end{aligned}
\end{equation*}
which   contradicts   the fact that $G$ is a  counterexample. Claim $3$ follows.

Combining Claims $1, 2, 3$ with Lemma \ref{botool1}, we can get that for any component  $H_i$ of $H,$
$|M_{H_i}|\leq f(n_i)\leq 0.85g(n_i).$ Further, we have the following claim.

\noindent {\bf Claim 4.} {\it For any $1\leq i\leq s$, $|M_{H_i-N_{H_i}(v)}|\leq 0.85g(n_i-1).$}

If $|N_{H_i}(v)|\geq 2$, then $|M_{H_i-N_{H_i}(v)}|\leq g(n_i-2)<0.85g(n_i-1).$
If $|N_{H_i}(v)|=1$, denoted by $N_{H_i}(v)=w_i$, then  $w_i$ is a cutpoint of $G$.
In the subgraph $H_i-N_{H_i}(v)$, similar to the  proof of Claims $1, 2$, there is no $K_6$, $K_6-e$, subgraph of order $5$ as a component. If there is a $K_4$ in $H_i-N_{H_i}(v)$, then $w_i+K_4$ is also a pendant   block $K_5$ in  $G$.
Then Claim  $2$ forces that
there is another component, denoted by $H^{1}_i, H^{2}_i,\ldots,H^{s_i}_i$,  in $H_i-N_{H_i}(v)$.
Applying the above conclusion to $w_i+K_4=K_5$, we have that $|M_{H^{j}_i}|\leq 0.85g(|H^{j}_i|)$ for any $1\leq j\leq s_i$.
So $|M_{H_i-N_{H_i}(v)}|=|M_{K_4}|\cdot \prod_{j=1}^{s_i} |M_{H^{j}_i}|\leq 6\cdot 0.85g(n_i-5)<0.85g(n_i-1).$
Claim $4$ follows.

Recall that $u\in  N_{K_5}(v)$. Now we can have a better upper bound on
$$|M_G|=|M_{G-u}|+\sum_{p\in N(u)}|M_{G-N[u]\cup N[p]}|.$$
Clearly, $|M_{G-u}|\leq  f(n-1).$ If $p\neq v$, $|M_{G-N[u]\cup N[p]}|\leq 0.85^{s}\cdot g(n-5).$ If $p=v$, $|M_{G-N[u]\cup N[p]}|=\prod_{i=1}^{s} |M_{H_i-N_{H_i}(v)}|\leq 0.85^{s}g(n-5-s).$
Consequently, we obtain
\begin{equation*}
\begin{aligned}
|M_G|\leq f(n-1)+ 3\cdot 0.85^{s}\cdot g(n-5)+0.85^{s}g(n-5-s).
\end{aligned}
\end{equation*}
If $s\geq 3$, it is easy to verify that
\begin{equation*}
\begin{aligned}
|M_G|&\leq (10^{-\frac{1}{5}}+3\cdot 0.85^{3}\cdot 10^{-\frac{4}{5}}+0.85^{3}\cdot 10^{-\frac{7}{5}})\cdot 10^{\frac{n-1}{5}} +f_1(n-1)\\
&< 0.9475\cdot 10^{\frac{n-1}{5}}+f_1(n-1)<f(n),
\end{aligned}
\end{equation*}
a contradiction.
Finally, we exclude $s=2$  (i.e., there are two components in $H=G-K_5$).   Denote by $|N_{H_i}(v)|=d_i$. Assume $\min \{d_1,d_2\}=1$, and w.l.o.g., let $d_1=1$.
Recall that $v$ is the cutpoint of pendant block $K_5$.
Consider
\begin{equation*}
\begin{aligned}
|M_G|&\leq |M_{G-v}|+\sum_{p\in N(v)}|M_{G-N[v]\cup N[p]}|\\
&\leq |M_{K_4}|\cdot 0.85^{2}\cdot g(n_1)\cdot g(n_2) +4\cdot 0.85\cdot g(n_1-1)\cdot g(n_2-d_2)\\
&+ g(n_1-2)\cdot g(n_2-d_2)+  0.85\cdot g(n_1-1)\cdot d_2\cdot g(n_2-d_2)\\
&\leq (6\cdot 0.85^{2}\cdot10^{-\frac{4}{5}}+4\cdot 0.85 \cdot 10^{-\frac{6}{5}}+10^{-\frac{7}{5}}\cdot 0.85 \cdot 10^{-\frac{6}{5}})\cdot 10^{\frac{n-1}{5}}\\
&< 0.9951\cdot 10^{\frac{n-1}{5}}<f(n),
\end{aligned}
\end{equation*}
a contradiction.
Assume $\min \{d_1,d_2\}\geq 2.$ It is easy to see that
\begin{equation*}
\begin{aligned}
|M_G|&\leq |M_{G-v}|+\sum_{p\in N(v)}|M_{G-N[v]\cup N[p]}|\\
&\leq  |M_{K_4}|\cdot 0.85^{2}\cdot g(n_1)\cdot g(n_2) + (4+d_1+d_2)\cdot g(n_1-d_1)\cdot g(n_2-d_2)<f(n),
\end{aligned}
\end{equation*}
which contradicts the fact that $G$ is a counterexample and completes the
proof of Lemma \ref{bopen05}.
\end{proof}

\begin{lemma} \label{bopen004}
Let $G$ be a minimal counterexample.  Then $G$ does not have a pendant   block $K_4$.
\end{lemma}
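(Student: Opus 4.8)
The plan is to mirror the structure of the proofs of Lemmas~\ref{bopen06} and \ref{bopen05} as closely as possible, leveraging the structural restrictions already established. Suppose for contradiction that $G$ is a minimal counterexample containing a pendant block $K_4$ with cutpoint $v$. By Lemma~\ref{ab} we may assume $d(v)\geq 5$, and then Lemma~\ref{bo1}~$(1)$ forces $H:=G-K_4$ to be disconnected; write $H_1,\dots,H_s$ for its components with $|V(H_i)|=n_i$ and $s\geq 2$. Since $v$ is a cutpoint of $G$, Corollary~\ref{bocor2} gives
\[
|M_G|={{3}\choose 2}\cdot|M_{G-K_4}|+\sum_{p\in N(v)}|M_{G-N[v]\cup N[p]}|=3|M_{G-K_4}|+\sum_{p\in N(v)}|M_{G-N[v]\cup N[p]}|,
\]
and I would also keep in reserve the alternative expansion $|M_G|\le |M_{G-u}|+\sum_{p\in N(u)}|M_{G-N[u]\cup N[p]}|$ for a vertex $u\in N_{K_4}(v)$, exactly as in the previous two lemmas.

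First I would run the same sequence of structural claims to restrict the components $H_i$. Claim: there is no $K_6$ or $K_6-e$ as a component in $H$ (using Lemma~\ref{bolem6c} for $K_6$ and, for $K_6-e$, either Lemma~\ref{gcut6} when $v$ attaches to one vertex or Lemma~\ref{false} when $v$ attaches to the two non-adjacent vertices, with degree-$6$ non-cutpoints excluded by Lemma~\ref{bolem6c}). Claim: no component of order $5$ (by Lemma~\ref{gcut6} such a component plus $v$ would be $K_6$, contradicting Lemma~\ref{bopen06}). Claim: no component of order $4$ other than possibly controlled copies — here one must be careful, since $v+H_i=K_5$ by Lemma~\ref{gcut5} would create a pendant $K_5$, contradicting Lemma~\ref{bopen05}; so in fact \emph{no} component of $H$ has order $4$ either. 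After these claims, Lemma~\ref{botool1}~$(3)$ yields $|M_{H_i}|\le f(n_i)\le 0.85\,g(n_i)$ for every $i$, and a Claim~4-type argument (as in Lemma~\ref{bopen05}) gives $|M_{H_i-N_{H_i}(v)}|\le 0.85\,g(n_i-1)$, by recursing on the cutpoint $w_i$ when $|N_{H_i}(v)|=1$ and invoking the already-proved absence of pendant $K_4,K_5,K_6$.

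Armed with these bounds I would substitute into the $G-u$ expansion with $u\in N_{K_4}(v)$: for $p\ne v$ one gets $|M_{G-N[u]\cup N[p]}|\le 0.85^{s}\,g(n-4)$ and for $p=v$ one gets $\prod_{i=1}^s|M_{H_i-N_{H_i}(v)}|\le 0.85^{s}\,g(n-4-s)$, while $|M_{G-u}|\le f(n-1)$; together with $|N_{K_4}(u)\setminus\{v\}|=2$ this gives
\[
|M_G|\le f(n-1)+2\cdot 0.85^{s}\,g(n-4)+0.85^{s}\,g(n-4-s),
\]
which is $<f(n)$ for $s\ge 3$ by a direct numerical check (writing $f(n-1)=10^{-1/5}10^{(n-1)/5}+f_1(n-1)$ and using $f_1(n-1)<f_1(n)$). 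The remaining case $s=2$ is handled exactly as in Lemma~\ref{bopen05}: split on $\min\{d_1,d_2\}$ where $d_i=|N_{H_i}(v)|$, use Corollary~\ref{bocor2} on $v$, and bound $|M_{G-v}|\le |M_{K_3}|\cdot 0.85^2 g(n_1)g(n_2)=3\cdot 0.85^2 g(n_1)g(n_2)$ together with the contributions $\sum_{p\in N(v)}|M_{G-N[v]\cup N[p]}|$ — separating $p\in N_{K_4}(v)$ (three terms, each $\le g(n_1-d_1')$-type) from $p\in N_{H_i}(v)$ — to obtain $|M_G|<f(n)$. I expect the main obstacle to be the bookkeeping in the $s=2$ subcase with $\min\{d_i\}=1$: there the recursion on the cutpoint $w_i$ of a component with a single attachment to $v$ must be invoked carefully to rule out further pendant $K_4$'s (and hence an unbounded tower of attached $K_5$'s via the $G'$-transformation trick used in Claim~3 of Lemma~\ref{bopen05}), and one must make sure the resulting constant stays below the relevant threshold; everything else is a routine transcription of the arguments already given for $K_5$ and $K_6$.
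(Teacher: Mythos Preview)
Your proposal is correct and follows essentially the same approach as the paper's proof. One clarification: in the Claim-4 analogue you should invoke only the already-proved absence of pendant $K_5,K_6$ (invoking absence of pendant $K_4$ would be circular), and consequently the $s=2$ subcase is simpler than you anticipate --- since Lemma~\ref{bopen05} already excludes pendant $K_5$'s, no $K_4$ component can arise in $H_i-w_i$, so the $G'$-transformation trick from Claim~3 of Lemma~\ref{bopen05} is unnecessary and the paper dispatches $s=2$ in one line.
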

\begin{proof}
Similar to the proof of Lemma  \ref{bopen06}.
For contradiction, let $K_4$ be a pendant   block  with cutpoint $v$.
By Lemma \ref{ab}, it suffices to exclude the case that $d(v)\geq 5$.
By Lemma \ref{bo1} $(1)$, $G-K_4$ is disconnected. Let $H_1,H_2,\ldots, H_s$ be all components of  $H=G-K_4$ with $|V(H_i)|=n_i$ and $s\geq 2$.


\noindent{\bf Claim 1.} {\it There are neither subgraphs of order $4, 5$ nor $K_6, K_6-e$ as a component in $H$.}

Note that $v$ is a cutpoint of $G$. Then $v+H_i= K_{n_i+1}$ if $n_i\in \{4,5\}$ by Lemmas \ref{gcut5}  and
\ref{gcut6}, which contradicts   Lemma \ref{bopen06} and Lemma \ref{bopen05}, respectively.
Applying the proof of Claim $1$ in Lemma \ref{bopen06}, we can also exclude $K_6$ and $K_6-e$.

\noindent{\bf Claim 2.}   {\it $|M_{H_i-N_{H_i}(v)}|\leq 0.85g(n_i-1)$ for any $1\leq i\leq s.$}

If $|N_{H_i}(v)|\geq 2$, then $|M_{H_i-N_{H_i}(v)}|\leq g(n_i-2)<0.85g(n_i-1).$
If $|N_{H_i}(v)|=1$, denoted by $N_{H_i}(v)=w_i$, then  $w_i$ is a cutpoint of $G$.
Similar to the proof of  Claim 1, we obtain that there are neither subgraphs of order $4, 5$ nor $K_6, K_6-e$ as a component in the subgraph $H_i-N_{H_i}(v)$. Thus  $|M_{H_i-N_{H_i}(v)}|\leq 0.85g(n_i-1)$ follows by
  Lemma \ref{botool1}. Claim $2$ follows.

Let $u\in N_{K_5}(v).$ Now consider the upper bound on  $|M_G|\leq |M_{G-u}|+\sum_{p\in N(u)}|M_{G-N[u]\cup N[p]}|.$ Clearly, $|M_{G-u}|\leq f(n-1)$. If $p\neq v$, then $|M_{G-N[u]\cup N[p]}|\leq 0.85^{s}\cdot g(n-4).$
If $p= v$, then $|M_{G-N[u]\cup N[p]}|\leq 0.85^{s}\cdot g(n-4-s).$
Putting all this together,
\begin{equation*}
\begin{aligned}
|M_G|\leq f(n-1)+2\cdot 0.85^{s}\cdot g(n-4)+0.85^{s}\cdot g(n-4-s).
\end{aligned}
\end{equation*}
When $s\geq 3$,
\begin{equation*}
\begin{aligned}
|M_G|&\leq (10^{-\frac{1}{5}}+2\cdot 0.85^{3}\cdot 10^{-\frac{3}{5}}+0.85^{3}\cdot 10^{-\frac{6}{5}})\cdot 10^{\frac{n-1}{5}} +f_1(n-1)\\
&< 0.9783\cdot 10^{\frac{n-1}{5}}+f_1(n-1)<f(n),
\end{aligned}
\end{equation*}
a contradiction.
Finally, similar to the proof of Lemma  \ref{bopen05}, it is easy to exclude $s=2.$   The result follows.
\end{proof}

Combining Lemma \ref{gpb3} with the above three lemmas, we obtain the structural information of $G$ as follows.
\begin{corollary} \label{forbidden}
Let $G$ be a minimal counterexample.  Then $G$ does not have the  pendant   blocks  $K_3, K_4, K_5, K_6$.
\end{corollary}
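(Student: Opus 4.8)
The statement of Corollary~\ref{forbidden} is nothing more than the conjunction of the four lemmas just established: Lemma~\ref{gpb3} excludes a pendant $K_3$, Lemma~\ref{bopen004} excludes a pendant $K_4$, Lemma~\ref{bopen05} excludes a pendant $K_5$, and Lemma~\ref{bopen06} excludes a pendant $K_6$. So the plan is simply to invoke these four results in turn; no genuinely new argument is needed, and the corollary should be a one-line consequence.

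It is worth spelling out the scheme common to those four lemmas, since that is where the real work sits. Suppose $G$ had a pendant block $K_r$ with cutpoint $v$, $3\le r\le 6$. Corollary~\ref{bocor2} provides the exact identity
\[
|M_G|=\binom{r-1}{2}\cdot|M_{G-K_r}|+\sum_{p\in N(v)}|M_{G-N[v]\cup N[p]}|,
\]
which cleanly separates the contribution of the block from that of the rest. The first step is to force $d(v)$ to be large — Lemma~\ref{ab} and Lemma~\ref{bopen5} do this — so that Lemma~\ref{bo1} makes $G-K_r$ (or $G-N[v]$) disconnected. The second step is to identify all components of this disconnected graph: combining the structural lemmas (Lemma~\ref{gcut5} and Lemma~\ref{gcut6} on components of order at most $6$, Lemma~\ref{false} on false twins, Lemma~\ref{bolem6c} on high-degree non-cutpoints) with the ratio estimates of Lemma~\ref{botool1}, one drives every component to be a copy of $K_5$, because any other component is too inefficient relative to $g$. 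The third step substitutes the all-$K_5$ configuration back into the identity and checks, via the tailored inequalities of Lemma~\ref{bolemma0011} and Eq.~(\ref{eq}), that even this most favourable arrangement yields $|M_G|\le f(n)$, contradicting the choice of $G$.

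The genuine obstacle in this program is the second step — ruling out every possible component type of $G-K_r$ (orders $4$, $5$, $6$, the near-complete $K_6-e$, and larger pieces). Each exclusion feeds the next one, propagating the complete-graph structure inward, and one must keep careful track of which ratio bound from Lemma~\ref{botool1} is active in which range of orders. Once the components are pinned to $K_5$'s, the remaining verification is routine: the generic range ($n\ge 19$, or $n\ge 36$ in the $K_5$ case) is dispatched by Lemma~\ref{bolemma0011}, and finitely many small $n$ are checked directly. Since all of this has already been carried out in Lemmas~\ref{gpb3}, \ref{bopen004}, \ref{bopen05}, and \ref{bopen06}, the corollary follows by merely quoting them.
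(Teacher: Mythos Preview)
Your proposal is correct and matches the paper's own approach: the corollary is simply the conjunction of Lemmas~\ref{gpb3}, \ref{bopen004}, \ref{bopen05}, and \ref{bopen06}, and the paper likewise just invokes these four results without any new argument. Your additional exposition of the scheme underlying those lemmas is accurate but goes beyond what either proof requires.
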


Up to now, the proof of Property C is complete.

\subsection{\textbf{Property D: each minimal counterexample has no non-cutpoints}\label{subsmall}}
In this subsection, we analyze the substructure induced by non-cutpoints and their neighbors, which will be crucial for our final main theorem.
\begin{lemma}  \label{z1}
Let $G$ be a minimal counterexample. If $u$ is a non-cutpoint  with $d(u)=5$, then
 there is exactly one neighbor $v\in N(u)$ such that $N[v]\backslash N[u] \neq \emptyset$.
\end{lemma}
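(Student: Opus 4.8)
The plan is to argue by contradiction: suppose $u$ is a non-cutpoint with $d(u)=5$ and that at least two neighbours of $u$, say $v_1$ and $v_2$, satisfy $N[v_i]\setminus N[u]\neq\emptyset$. First I would record the basic structural facts available from the earlier properties: since $u$ is a non-cutpoint, $G-u$ is connected and hence $|M_{G-u}|\le f(n-1)$ by Proposition~\ref{boproso}; since $u$ is a non-cutpoint of degree $5$, Corollary~\ref{structureb} applies but gives no new restriction here; and by Corollary~\ref{forbidden} the graph $G$ has no pendant block $K_3,K_4,K_5,K_6$, which I will need to rule out degenerate local configurations around $N[u]$. The main tool will be the upper bound from Proposition~\ref{bopro1},
$$|M_G|\le |M_{G-u}|+\sum_{p\in N(u)}|M_{G-N[u]\cup N[p]}|\le f(n-1)+\sum_{p\in N(u)}|M_{G-N[u]\cup N[p]}|,$$
so everything reduces to bounding the five summands $|M_{G-N[u]\cup N[p]}|$.

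The key point is that for each neighbour $p$ of $u$, if $N[p]\subseteq N[u]$ then $G-N[u]\cup N[p]=G-N[u]$ has order $n-6$, whereas if $N[p]\setminus N[u]\neq\emptyset$ then $G-N[u]\cup N[p]$ has order at most $n-7$. Under our assumption at least two of the five neighbours fall in the second category, so
$$\sum_{p\in N(u)}|M_{G-N[u]\cup N[p]}|\le 3\,g(n-6)+2\,g(n-7).$$
This is exactly the kind of estimate controlled by Lemma~\ref{gong3} (the function $p(x)=(x-1)g(n-1-x)+g(n-2-x)$): here we should compare with $p(6)$-type quantities, and in fact $3g(n-6)+2g(n-7)$ is dominated by the relevant $p$-value since splitting two neighbours out only helps. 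Plugging in $g(m)=10^{m/5}$ we get a bound of the form
$$|M_G|\le f(n-1)+\bigl(3\cdot 10^{-6/5}+2\cdot 10^{-7/5}\bigr)\cdot 10^{n/5}
= 10^{-1/5}\cdot 10^{(n-1)/5}+\bigl(3\cdot 10^{-1}+2\cdot 10^{-6/5}\bigr)\cdot 10^{(n-1)/5}+f_1(n-1),$$
and the numerical coefficient $10^{-1/5}+0.3+2\cdot 10^{-6/5}$ is strictly less than $1$, so together with $f_1(n-1)\le f_1(n)$ (monotonicity, property (1) after Eq.~\eqref{eq}) we obtain $|M_G|<f(n)$, contradicting the choice of $G$. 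One also has to handle small $n$ ($15\le n\le$ some threshold) where $f_1$ is given by the first branch, but the same coefficient estimate works since $f_1(n-1)<f_1(n)$ still holds; a direct check covers any borderline case.

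The main obstacle is making the bound $|M_{G-N[u]\cup N[p]}|\le g(n-7)$ genuinely available for the two "escaping" neighbours — i.e.\ confirming that $N[u]\cup N[p]$ really contains at least $7$ vertices and not fewer, and that the leftover graph is still an honest (possibly disconnected) graph to which Proposition~\ref{Gupta0} applies. This requires checking that $v_1,v_2$ are not themselves forced into $N[u]$ by twin/false-twin coincidences (Lemma~\ref{false}) and that the presence of an outside neighbour of $v_i$ does not collapse under adjacencies already counted; here Corollary~\ref{forbidden} and Lemma~\ref{false} rule out the bad cases. A secondary subtlety is that if the two escaping neighbours share their outside neighbours, $G-N[u]\cup N[p_1]$ and $G-N[u]\cup N[p_2]$ could both have order exactly $n-7$ rather than one of them dropping further; but since we only claimed $\le n-7$ for each, this is harmless — the displayed coefficient already leaves a comfortable margin below $1$.
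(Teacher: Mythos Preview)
Your central numerical claim is false: $10^{-1/5}+0.3+2\cdot 10^{-6/5}\approx 0.6310+0.3000+0.1262=1.0572>1$. Consequently the bound $f(n-1)+3g(n-6)+2g(n-7)$ does \emph{not} yield $|M_G|<f(n)$, and your argument collapses exactly in the cases $|X|\in\{2,3\}$ you are trying to treat. (Already for $|X|=3$ the analogous coefficient $10^{-1/5}+0.2+3\cdot 10^{-6/5}\approx 1.020$ still exceeds $1$; only $|X|\ge 4$ succeeds with this naive estimate, giving coefficient $\approx 0.9834$.)

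This is not a minor arithmetic slip but the essential difficulty of the lemma. In the paper's proof, $|X|\ge 4$ is indeed handled by your bound, but $|X|=3$ and especially $|X|=2$ require substantially more work: one must tighten the $g(n-6)$ terms arising from neighbours $p$ with $N[p]\subseteq N[u]$ (for which $G-N[u]\cup N[p]=G-N[u]$). The paper does this by analysing the component structure of $G-N[u]$ and invoking Lemmas~\ref{gcut5}, \ref{gcut6} and Corollary~\ref{forbidden} to force some component to have order outside $\{4,5,6\}$, after which Lemma~\ref{botool1} replaces $g(n-6)$ by $0.85\,g(n-6)$ (or by $f(n-6)$ when $G-N[u]$ is connected). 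The case $|X|=2$ alone occupies five claims, pinning down the possible component sizes of $G-N[u]$ and the values $|N[v_i]\setminus N[u]|$ before any inequality closes. None of this machinery appears in your proposal.

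A separate remark: the ``main obstacle'' you identify is not one. If $p\in N(u)$ and $N[p]\setminus N[u]\neq\emptyset$ then $|N[u]\cup N[p]|\ge 6+1=7$ trivially, with no appeal to Lemma~\ref{false} or Corollary~\ref{forbidden} needed. The genuine obstacle is the one above.
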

\begin{proof}
Let $X:=\{v\in N_G(u)~|~|N[v]\setminus N[u]|\ge 1\}$.
Clearly, the result follows if $|X|=1$. Then we will prove that $|M_G|\leq f(n)$ if $|X|\geq 2$, which contradicts the fact that $G$ is a counterexample.
By Proposition \ref{bopro1},
\begin{equation}\label{noncut5}
\begin{aligned}
|M_G|&\leq |M_{G-u}|+\sum_{p\in N(u)}|M_{G-N[u]\cup N[p]}|.
\end{aligned}
\end{equation}
Since $u$ is a non-cutpoint, $G-u$ is connected and further $|M_{G-u}|\leq f(n-1)$. If  $p\in N_G(u)$ and $N[p]\subseteq N[u]$  (i.e., $p\notin X$), then $|M_{G-N[u]\cup N[p]}|\leq g(n-6)$;  if $|N[p]\backslash N[u]|\ge a\ge 1$ (i.e.,  $p\in X$), then $|M_{G-N[u]\cup N[p]}|\leq g(n-6-a)$. We divide the proof into three cases.

\noindent {\bf Case 1.} {\it $|X|\ge 4.$}

In this case, Ineq. (\ref{noncut5}) yields
\begin{equation*}
\begin{aligned}
|M_G|&\leq  f(n-1)+4g(n-7)+g(n-6)\\
&= (10^{-\frac{1}{5}}+4\cdot  10^{-\frac{6}{5}}+ 10^{-\frac{5}{5}})\cdot 10^{\frac{n-1}{5}}+f_1(n-1)\\
&<0.9834\cdot 10^\frac{n-1}{5}+f_1(n-1)<f(n),
\end{aligned}
\end{equation*}
contradicting
the fact that $G$ is a counterexample.

\noindent {\bf Case 2.} {\it $|X|=3.$}

If there exists a vertex $v\in X$ such that $|N[v]\backslash N[u]|\ge 2$, then
Ineq. (\ref{noncut5}) yields
\begin{equation*}
\begin{aligned}
|M_G|&\leq  f(n-1)+2g(n-7)+g(n-8)+2g(n-6)\\
&= (10^{-\frac{1}{5}}+2\cdot  10^{-\frac{6}{5}}+ 10^{-\frac{7}{5}}+2\cdot  10^{-\frac{5}{5}})\cdot 10^{\frac{n-1}{5}}+f_1(n-1)\\
&<0.9970\cdot 10^\frac{n-1}{5}+f_1(n-1)<f(n),
\end{aligned}
\end{equation*}
 a contradiction.

Now we exclude the case where   $|N[v]\backslash N[u]|=1$ for each $v\in X$.
In this situation, the subgraph $G-N[u]$ has at most three  components, denoted by
$G_1, \cdots, G_j$, where $1\le j\le3$.
Let $n_k$ be the order of $G_k$.
If  $N[p]\subseteq N[u]$, then $|M_{G-N[u]\cup N[p]}|=|M_{G-N[u]}|=\prod_{k=1}^{j}|M_{G_k}|  \leq\prod_{k=1}^{j}f(n_k)$. Therefore, Ineq. (\ref{noncut5})  yields
\begin{equation}\label{noncut5case22}
\begin{aligned}
|M_G|&\leq f(n-1)+3g(n-7)+2\prod_{k=1}^{j}f(n_k),
\end{aligned}
\end{equation}
where $\sum_{k=1}^jn_k=n-6$.
We claim that there is a component $G_k~(1\le k\le j)$ such that $n_k\notin\{4, 5, 6\}$.
If $j=1,$ i.e., $G-N[u]$ has exactly one component, then $|G|\geq 15$ and $|N[u]|=6$ forces that $n_k\notin\{4, 5, 6\}.$
When $j\geq 2,$ since $|N[v]\backslash N[u]|=1$ for each $v\in X$, there is a component $G_k$ with
$|N(G_k)\cap X|=1$, and denote this common vertex by $v_k=N(G_k)\cap X$, i.e. $v_k\in X$.
Recall that $|N[v_k]\backslash N[u]|=1$ and let  $w_k=N(v_k)\cap V(G_k)$.
Since $w_k$ is a cutpoint of $G$,  $n_k\notin\{4, 5, 6\}.$ Otherwise,  $G_k$ is the pendant   block
$K_{n_k}$ by Lemmas \ref{gcut5} and \ref{gcut6}, which   contradicts Corollary  \ref{forbidden}.
Further, $f(n_k)\leq 0.85g(n_k)$ due to Lemma \ref{botool1}. Combining this upper bound, Ineq. (\ref{noncut5case22})  yields
\begin{equation*}
\begin{aligned}
|M_G|&\leq  f(n-1)+3g(n-7)+2\cdot 0.85\cdot g(n-6)\\&= (10^{-\frac{1}{5}}+3\cdot  10^{-\frac{6}{5}}+ 2\cdot 0.85\cdot 10^{-\frac{5}{5}})\cdot 10^{\frac{n-1}{5}}+f_1(n-1)\\
&<0.9903\cdot 10^\frac{n-1}{5}+f_1(n-1)<f(n),
\end{aligned}
\end{equation*}
 a contradiction.

\noindent {\bf Case 3.} {\it $|X|=2.$}

\noindent {\bf Claim 1.} {\it If $G_k$ is a component of $G-N[u]$ with $|G_k|\in\{4, 5, 6\}$, then $|N(X)\cap V(G_k)|\geq 2.$}

Otherwise, let $N(X)\cap V(G_k)=v_k$ where $v_k\in V(G_k).$ Then $v_k$ is a cutpoint and  thus $G_k$ is one of the pendant blocks $K_4, K_5, K_6$ by Lemmas \ref{gcut5} and \ref{gcut6}, which contradicts Corollary  \ref{forbidden}.

\noindent {\bf Claim 2.} {\it  There are at least two components in $G-N[u]$.}

For contradiction, suppose that there is exactly one component in $G-N[u]$. Now  Ineq. (\ref{noncut5})  yields
\begin{equation}\label{noncut5case31}
\begin{aligned}
|M_G|&\leq  f(n-1)+2 g(n-7) +3f(n-6).
\end{aligned}
\end{equation}
For $15\leq |G|\leq 18,$ it can be directly  calculated  that $|M_G|\leq f(n)$.
For $|G|\geq 19,$ $\frac{f(n-6)}{g(n-6)}\leq \frac{f(14)}{g(14)}< 0.7778$ by   Lemma \ref{botool1}.
Replacing $f(n-6)$ with $0.7778g(n-6)$ in Ineq. (\ref{noncut5case31}), it is easy to see that
\begin{equation*}
\begin{aligned}
|M_G|&\leq  f(n-1)+2 g(n-7) +3f(n-6)\\
&<(10^{-\frac{1}{5}}+ 2\cdot 10^{-\frac{6}{5}}+3\cdot 0.7778\cdot10^{-\frac{5}{5}})\cdot 10^{\frac{n-1}{5}} +f_1(n-1)\\
&<0.9905\cdot 10^{\frac{n-1}{5}} +f_1(n-1)< f(n).
\end{aligned}
\end{equation*}
 Both of the two cases contradict the choice of $G$. So Claim 2 follows.

\noindent {\bf Claim 3.} {\it  Every  component  of $G-N[u]$ is of order  equal to one of $\{4, 5, 6\}$.}

For contradiction, first we assume that there are at least two components of $G-N[u]$ of order not equal to $\{4, 5, 6\}$. Then replacing $f(n-6)$ with $0.85^{2}g(n-6)$ in Ineq. (\ref{noncut5case31}) due to Lemma \ref{botool1}, we obtain that
\begin{equation*}
  \begin{aligned}
  |M_G|&\leq  f(n-1)+2 g(n-7) +3f(n-6)\\
  &\leq  (10^{-\frac{1}{5}}+ 2\cdot 10^{-\frac{6}{5}}+3\cdot 0.85^{2}\cdot 10^{-\frac{5}{5}})\cdot 10^{\frac{n-1}{5}} +f_1(n-1)\\
  &<0.9739\cdot 10^{\frac{n-1}{5}} +f_1(n-1)< f(n),
  \end{aligned}
\end{equation*}
 a contradiction. Finally, we assume that there is exactly one component, say $G_1$, of order not equal to $\{4, 5, 6\}$. Then  Claim 2 implies that there is another component, say $G_2$,  of order equal to one of $\{4, 5, 6\}$.   Claim 1 forces that
$|N(X)\setminus N[u]|\geq 3$. Consequently, we obtain that
\begin{equation*}
  \begin{aligned}
|M_G|&\leq f(n-1)+ g(n-8) + g(n-7) + 3\cdot 0.85\cdot g(n-6)\\
&=(10^{-\frac{1}{5}}+ 10^{-\frac{7}{5}}+ 10^{-\frac{6}{5}}+3\cdot 0.85 \cdot 10^{-\frac{5}{5}})\cdot 10^{\frac{n-1}{5}}+f_1(n-1)\\
&<0.9889\cdot 10^{\frac{n-1}{5}} +f_1(n-1)< f(n),
  \end{aligned}
\end{equation*}
a contradiction. So Claim 3 follows.

Label the vertices of $X$ by $v_1, v_2$ in which
$|N[v_1]\setminus N[u]|\geq |N[v_2]\setminus N[u]|$.
We claim that there are exactly two cases: either $|N[v_1]\setminus N[u]|=|N[v_2]\setminus N[u]|=2$; or
 $|N[v_2]\setminus N[u]|=1$ and $|N[v_1]\setminus N[u]|\leq 6$.
Otherwise, if $|N[v_1]\setminus N[u]|\ge 3$ and $|N[v_2]\setminus N[u]|\ge 2$, then
$|M_G|\leq  f(n-1)+g(n-9)+g(n-8)+3g(n-6)< f(n)$
 by a direct calculation;
if $|N[v_1]\setminus N[u]|\ge 7$ and $|N[v_2]\setminus N[u]|\ge 1$, then
$|M_G|\leq  f(n-1)+g(n-13)+g(n-7)+3g(n-6)< f(n)$
by a direct calculation. The claim follows.

\noindent {\bf Claim 4.} {\it For $|X|=2$, if $|N[v_1]\setminus N[u]|=|N[v_2]\setminus N[u]|=2$, then $|M_G|\leq f(n)$.}

Combining Claims $1, 2$ with  $3$, we conclude that there are exactly two components $G_k$ with $n_k\in\{4, 5, 6\}$. Note that $|N[u]|=6$ and then   $15\leq |G|\leq 18$. It can be  directly  calculated  that $|M_G|\leq f(n)$.

\noindent {\bf Claim 5.} {\it For $|X|=2$, if $|N[v_1]\setminus N[u]|\leq 6$ and  $|N[v_2]\setminus N[u]|=1$, then $|M_G|\leq f(n)$.}

W.l.o.g.,   denote by $|N(v_2)\cap V(G_1)|=1$, then $|N(v_1)\cap V(G_1)|\geq 1$ due to Claims 1 and 3.
Now only $v_1$ is adjacent to the other components of $G-N[u]-G_1$  and it is easy to see that $v_1$
is a cutpoint of $G$. Similar to the above discussion, for any other component $G_k~(i\neq 1)$ of $G-N[u]$, it holds that $n_k\notin \{4, 5\}$.
The same as the proof of Claim 1 in Lemma  \ref{bopen06}, we can also obtain that $G_k\notin \{K_6, K_6-e\}$.
Combining Claim $3$, we have that $|M_{G_k}|\leq \frac{13}{10^{\frac{6}{5}}}\cdot g(6)$.
Consequently, Ineq. (\ref{noncut5})  yields
$|M_G|\leq  f(n-1)+ g(n-7) + g(n-9)+3\cdot \frac{13}{10^{\frac{6}{5}}}\cdot g(n-6)< f(n),$
a contradiction.

Hence $|X|=1$, as desired.
\end{proof}

\begin{corollary}  \label{d5noncut}
Let $G$ be a minimal counterexample and $u$ be a non-cutpoint of  $G$.  Then $2\leq d(u)\leq 4$.
\end{corollary}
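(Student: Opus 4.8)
The plan is to deduce the corollary directly from the structural restrictions already accumulated for a minimal counterexample. By Lemma \ref{bolem1} a minimal counterexample has no pendant vertices, so every vertex has degree at least $2$; hence it suffices to prove $d(u)\le 4$. By Corollary \ref{structureb} we already know $2\le d(u)\le 5$ for every non-cutpoint $u$, so the entire task reduces to ruling out $d(u)=5$.

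Assume, for contradiction, that $u$ is a non-cutpoint with $d(u)=5$. By Lemma \ref{z1} there is exactly one neighbor $v\in N(u)$ with $N[v]\setminus N[u]\neq\emptyset$; every other neighbor $p\in N(u)$ satisfies $N[p]\subseteq N[u]$, i.e. all of $p$'s neighbors lie in $N[u]$. Since a minimal counterexample has order at least $15$ by Theorem \ref{g13}, while $|N[u]|=d(u)+1=6$, the set $V(G)\setminus N[u]$ is nonempty; and by the preceding sentence every edge between $N[u]$ and $V(G)\setminus N[u]$ is incident with $v$. Therefore $v$ is a cutpoint of $G$. Moreover the component $G_1$ of $G-v$ containing $u$ is exactly $G[N[u]\setminus\{v\}]$: it is contained in $N[u]\setminus\{v\}$ because $u$ and its remaining four neighbors have all their neighbors in $N[u]$, and it is connected because $u$ is adjacent to every vertex of $N(u)\setminus\{v\}$. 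Thus $|G_1|=5$ and $|G_1+v|=6$.

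Now apply Lemma \ref{gcut6} to the cutpoint $v$ and the component $G_1$: since $|G_1+v|=6$, we obtain $G_1+v=G[N[u]]=K_6$. Because $v$ is the only vertex of $N[u]$ having a neighbor outside $N[u]$, this copy of $K_6$ is a block of $G$ whose unique cutpoint of $G$ is $v$; that is, $G$ has a pendant block $K_6$. This contradicts Corollary \ref{forbidden} (Property C). Hence no non-cutpoint of $G$ has degree $5$, and combining this with $d(u)\ge 2$ and Corollary \ref{structureb} yields $2\le d(u)\le 4$.

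This is a short deduction from lemmas already established; the only point that needs genuine care — and it is supplied by Lemma \ref{z1} — is that, apart from $v$, every neighbor of $u$ has its closed neighborhood inside $N[u]$. This is precisely what guarantees both that $G[N[u]\setminus\{v\}]$ is a single component of $G-v$ of order $5$ (so that Lemma \ref{gcut6} applies) and that the resulting $K_6$ is a pendant block rather than merely an induced $K_6$.
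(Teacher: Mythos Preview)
Your proof is correct and follows essentially the same route as the paper: reduce to the case $d(u)=5$ via the already-established degree bounds on non-cutpoints, invoke Lemma~\ref{z1} to isolate a unique neighbor $v$ with $N[v]\setminus N[u]\neq\emptyset$, observe that $v$ is a cutpoint with $|G_1+v|=6$, apply Lemma~\ref{gcut6} to force $G[N[u]]=K_6$, and contradict Corollary~\ref{forbidden}. The only cosmetic difference is that you cite Corollary~\ref{structureb} rather than Lemmas~\ref{bolem1} and~\ref{bolem6c} directly, and you spell out in full why $v$ is a cutpoint and why the resulting $K_6$ is pendant; the paper's version leaves these as implicit.
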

\begin{proof}
By Lemmas \ref{bolem1} and \ref{bolem6c}, it suffices to exclude the case that $d(u)=5$. If not, Lemma \ref{z1} shows
that there is exactly one neighbor $v\in N(u)$ such that $N[v]\backslash N[u] \neq \emptyset$. Note that $v$ is a cutpoint of $G$, and thus the subgraph $G[N[u]]=K_6$ by Lemma \ref{gcut6}, which  contradicts Corollary \ref{forbidden}.
\end{proof}

\begin{lemma} \label{d43tool}
Let $G$ be a minimal counterexample and $u$ be any vertex of  $G$.  Then for any neighbor $p\in N(u)$, there is no  $K_4,  K_5, K_5-e, K_6, K_6-e$ as a component in the subgraph $G-N[u]\cup N[p]$.
\end{lemma}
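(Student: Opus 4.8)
The plan is to argue by contradiction within the minimal counterexample $G$: suppose some vertex $u$ has a neighbor $p$ such that $G-N[u]\cup N[p]$ contains a component $B\in\{K_4,K_5,K_5-e,K_6,K_6-e\}$, and derive a structural violation of one of the already-established properties. The first observation is that since $B$ is a full component of $G-N[u]\cup N[p]$, every vertex of $B$ has all of its neighbors outside $N[u]\cup N[p]$ lying inside $B$; hence in $G$ the only vertices of $B$ that can have neighbors outside $B$ are those adjacent to some vertex of $N[u]\cup N[p]$. I would let $S$ denote the set of vertices of $B$ with a neighbor in $N[u]\cup N[p]$, so that $B$ is attached to the rest of $G$ exactly through $S$, and split according to $|S|$.

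The key case is $|S|=1$, say $S=\{w\}$. Then $w$ is a cutpoint of $G$ separating $V(B)\setminus\{w\}$ from the rest, so $B$ (or $B$ together with $w$, depending on whether $w\in V(B)$ — here $w\in V(B)$) is a pendant block-like piece: more precisely $B$ is a component of $G-w$ with $|B|\le 6$, so by Lemma~\ref{gcut5} and Lemma~\ref{gcut6} the subgraph $B$ must already be complete, i.e. $B\in\{K_4,K_5,K_6\}$, and then $B$ is a pendant block $K_4$, $K_5$ or $K_6$ of $G$, directly contradicting Corollary~\ref{forbidden}. This also disposes of the cases $B=K_5-e$ and $B=K_6-e$ when $|S|=1$, since those are not complete. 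When $|S|\ge 2$ I would show this forces either a non-cutpoint of large degree or a forbidden twin configuration: if $B=K_6$ or $K_6-e$, then a vertex of $B$ not in $S$ (such a vertex exists since $|B|=6>|S|$ is not automatic, so one must check $|S|\le 5$, which follows because $N[u]\cup N[p]$ has bounded size and $B$ is attached through vertices each of which gains degree) has degree $5$ inside $B$, and combined with the extra edges/attachment it becomes a non-cutpoint of degree $\ge 6$ unless $|S|$ is small, contradicting Lemma~\ref{bolem6c}; for $K_6-e$ the two endpoints of the missing edge become false twins if $S$ is exactly those two vertices, contradicting Lemma~\ref{false}, exactly as in Claim~1 of Lemma~\ref{bopen06}.

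The remaining configurations, $B\in\{K_4,K_5,K_5-e\}$ with $|S|\ge 2$, I would handle by the same counting technique used throughout Section~3: bound $|M_G|$ via Proposition~\ref{bopro1} applied at $u$ (or at a suitable vertex of $B$), using that each relevant residual graph $G-N[u]\cup N[p']$ has its order reduced by a controlled amount because $B$ is dense, and invoking Lemma~\ref{botool1} to replace each $f$-factor of a non-$K_5$ component by $0.85\,g$ of the same order; the resulting numeric inequality $|M_G|<f(n)$ contradicts $G$ being a counterexample. I expect the main obstacle to be the bookkeeping in the $|S|\ge 2$ subcases — in particular verifying that $B$ together with its attachment cannot hide a vertex of degree $\le 5$ that would make the earlier lemmas inapplicable, and pinning down exactly how much the orders of the residual graphs drop so that the coefficients in the $10^{(n-1)/5}$ estimate stay below $1$; the $|S|=1$ case, by contrast, is immediate from Corollary~\ref{forbidden} once one notes the cutpoint.
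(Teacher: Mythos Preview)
Your treatment of the case $|S|=1$ is correct and coincides with the paper. For $B\in\{K_5,K_6,K_6-e\}$ with $|S|\ge 2$ your idea is also right, but the bookkeeping is off: the relevant bound is Corollary~\ref{d5noncut} (every non-cutpoint has degree at most $4$), not Lemma~\ref{bolem6c}, and the vertex that is too large is one \emph{in} $S$ for $K_5$ (degree $4$ inside $B$ plus at least one outside) and any of the four full-degree vertices for $K_6$ or $K_6-e$; vertices \emph{not} in $S$ gain no extra edges, so your sentence about them ``becoming degree $\ge 6$'' is the wrong way around.

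The genuine gap is your plan for $B\in\{K_4,K_5-e\}$ with $|S|\ge 2$. A direct counting argument via Proposition~\ref{bopro1} does not close here. For instance, if $B=K_4$ with $|S|=2$ and you expand at a non-attaching vertex $c\in V(B)$ (so $d(c)=3$ and $c$ has a twin $d\in V(B)$), then $G-N[c]\cup N[d]$ has order $n-4$, and the resulting coefficient of $10^{(n-1)/5}$ is $10^{-1/5}+10^{-3/5}+2\cdot 10^{-4/5}\approx 1.20$, well above $1$; expanding at $u$ instead gives no control over the terms with $p'\neq p$, since $B$ need not persist as a component of $G-N[u]\cup N[p']$. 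The paper handles these subcases structurally, not numerically: for $K_5-e$ with $|S|=2$ it observes $S$ must be exactly the two endpoints of the missing edge (else some non-cutpoint has degree $\ge 5$), then \emph{adds} that edge to obtain another minimal counterexample which now has a non-cutpoint of degree $5$, contradicting Corollary~\ref{d5noncut}. For $K_4$ with $|S|\in\{2,3\}$ it applies the twin-making Lemmas~\ref{y1} and~\ref{y-y1} to two attaching vertices $a,b$ with $N(a)\setminus V(B)\neq N(b)\setminus V(B)$, producing a minimal counterexample with strictly smaller $\tau$. Only in the residual subcase $|S|=4$ does the paper resort to counting, and even there it first re-runs the degree argument to rule out $K_5$-components in each $G-N[a]\cup N[p]$, so that Lemma~\ref{botool1}(1) supplies a factor $0.951$ per component rather than $1$. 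Your proposal omits the twin-making step entirely, and that is the missing idea.
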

\begin{proof}
For contradiction, suppose there is a component $H\in \{K_4,  K_5, K_5-e, K_6, K_6-e\}$ in $G-N[u]\cup N[p]$ for some $p\in N(u)$. We divide the proof into the following two cases.

\noindent {\bf Case 1.} {\it  $|N(G-H)\cap V(H)|=1$.}

W.l.o.g., let  $N(G-H)\cap V(H)= \{v\}$ where $v\in V(H)$,  then $v$ is a cutpoint of $G$. Further, $H$ is one of the pendant blocks $K_4, K_5, K_6$ by Lemmas   \ref{gcut5} and \ref{gcut6}, which
contradicts Corollary \ref{forbidden}.

\noindent {\bf Case 2.} {\it  $|N(G-H)\cap V(H)|\geq 2$.}

Note that in this case every vertex $v\in V(H)$   is a non-cutpoint of $G$ and thus $2\leq d_{G}(v)\leq 4$ by Corollary   \ref{d5noncut}.
When $H\in \{K_6, K_6-e, K_5\}$, there is a non-cutpoint $v\in V(H)$  of $G$ with $d_{G}(v)\geq 5$,  a contradiction.
When $H=K_5-e~(e=vw)$, in order to exclude non-cutpoints of degree greater than   $4$, then $N(G-H)\cap V(H)=\{v,w\}$ with $d_G(v)=d_G(w)= 4$. Besides, $|N(H)\cap V(G-H)|=2;$ otherwise, if $|N(H)\cap V(G-H)|=1,$
then the subgraph $G[N(H)+V(H)]=K_6$ by Lemma \ref{gcut6}, a contradiction.
Now we construct a new graph $G'$ obtained from $G$ by just adding an edge $vw$ and it is easy to verify  that $|M_{G'}|\geq|M_G|>f(n)$ and $\tau(G')= \tau(G)$. This means that $G'$ is also a minimal counterexample; however, there is a non-cutpoint  $v\in V(G')$ of degree $5$, which
contradicts Corollary \ref{d5noncut}.

It remains to  exclude the case where $H=K_4$ and  $|N(G-H)\cap V(H)|\geq 2$. Observe that $|N(H)\cap V(G-H)|\geq2;$ otherwise there is a pendant $K_5$, a contradiction. For convenience, we label  $V(H)=\{a,b,c,d\}$ and divide into three cases to consider the possibilities.
We first consider $|N(G-H)\cap V(H)|=2$, w.l.o.g. denoted by $N(G-H)\cap V(H)=\{a,b\}$, in order to exclude non-cutpoints of degree greater than   $4$, then $d_G(a)=d_G(b)= 4$. Further, combining $|N(H)\cap V(G-H)|\geq2$,  we also have that $|N(H)\cap V(G-H)|= 2$. Then applying Lemma \ref{y1} to the edge $ab$, we obtain that either
$|M_{G_{a\rightarrow b}}|\geq|M_G|>f(n)$ with $\tau(G_{a\rightarrow b})< \tau(G)$ or $|M_{G_{b\rightarrow a}}|\geq|M_G|>f(n)$ with $\tau(G_{b\rightarrow a})< \tau(G)$. However, both of the two cases contradict  the choice of $G$.

We now consider $|N(G-H)\cap V(H)|=3$, w.l.o.g. denoted by
$N(G-H)\cap V(H)=\{a,b,c\}$, we can also obtain that $d_G(a)=d_G(b)= d_G(c)=4$ similarly. For
the set $\{N(v)\setminus V(H)~|~v\in \{a,b,c\}\}$, no matter they are all different or exactly two of them are different (w.l.o.g. $N(a)\setminus V(H)\neq N(b)\setminus V(H)$), Lemmas \ref{y-y1} and \ref{twins} give that either $|M_{G_{T_G(a)\rightarrow b}}|\geq|M_G|>f(n)$ with $\tau(G_{T_G(a)\rightarrow b})< \tau(G)$ or $|M_{G_{T_G(b)\rightarrow a}}|\geq|M_G|>f(n)$ with $\tau(G_{T_G(b)\rightarrow a})< \tau(G)$, contradicting the choice of $G$.

Finally, we consider $N(G-H)\cap V(H)=\{a,b,c,d\}$, then  $d_G(a)=d_G(b)= d_G(c)=d_G(d)=4$ similarly.
Now we divide into two cases to analyze  the set $\{N(v)\setminus V(H)~|~v\in \{a,b,c,d\}\}$. We first claim that it can not happen the case that exactly two of them are different, (w.l.o.g. $N(a)\setminus V(H)\neq N(b)\setminus V(H)$). If not,
Lemmas \ref{y-y1} and \ref{twins} give that either $|M_{G_{T_G(a)\rightarrow b}}|\geq|M_G|>f(n)$ with $\tau(G_{T_G(a)\rightarrow b})< \tau(G)$ or $|M_{G_{T_G(b)\rightarrow a}}|\geq|M_G|>f(n)$ with $\tau(G_{T_G(b)\rightarrow a})< \tau(G)$, contradicting the choice of $G$.
For the remaining cases, it can readily be verified that there exists a vertex, say $a\in V(H)$, such that for any neighbor $p\in N(a)$, $N(p)\nsubseteq N(a)$ and thus $|G-N[a]\cup N[p]|\leq n-6$.
Consider
\begin{equation}\label{0.85}
\begin{aligned}
|M_G|\leq  |M_{G-a}|+\sum_{p\in N(a)}|M_{G-N[a]\cup N[p]}|.
\end{aligned}
\end{equation}
Applying  the same discussion above to the vertex $a$, it is easy to verify that there is no $K_5$ as a component in $G-N[a]\cup N[p]$ for any $p\in N(a)$.
Further for any component $S$ of order $n_s$ in $G-N[a]\cup N[p]$, $|M_S|\leq f(n_s)\leq 0.951g(n_s)$  by Lemma \ref{botool1}.
If there exists $t~(1\le t\le 4)$ vertices of $N(a)$ such that $G-N[a]\cup N[p]$ is connected, then when
$n\geq20$ Ineq.
\ref{0.85} yields $|M_G|\leq f(n-1)+tf(n-6)+(4-t)\cdot 0.951^2g(n-6)\leq f(n).$
When $15\leq n<20$, by calculation, $|M_G|\leq f(n).$
This implies that  for any $p\in N(a)$, $G-N[a]\cup N[p]$ is disconnected, and further
$|M_G|\leq f(n-1)+4\cdot0.951^2\cdot 10^\frac{n-6}{5}<f(n).$
This contradicts the fact that $G$ is a counterexample and completes the
proof of Lemma \ref{d43tool}.
\end{proof}

\begin{lemma}\label{yyn2}
Let $G$ be a minimal counterexample. Then $G$ does not have a non-cutpoint of degree $2$.
\end{lemma}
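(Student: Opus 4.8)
The plan is to derive a contradiction by assuming $G$ has a non-cutpoint $u$ with $d(u)=2$, say $N(u)=\{v_1,v_2\}$. First I would record the structural constraints already available: by Corollary~\ref{d5noncut} every non-cutpoint has degree between $2$ and $4$, by Corollary~\ref{forbidden} there are no pendant blocks $K_3,K_4,K_5,K_6$, and by Lemma~\ref{bolem1} there are no pendant vertices. Since $G-u$ is connected (as $u$ is a non-cutpoint), Proposition~\ref{boproso} gives $|M_{G-u}|\le f(n-1)$. The key inequality is Proposition~\ref{bopro1}:
\[
|M_G|\le |M_{G-u}|+\sum_{p\in N(u)}|M_{G-N[u]\cup N[p]}|\le f(n-1)+|M_{G-N[u]\cup N[v_1]}|+|M_{G-N[u]\cup N[v_2]}|.
\]
The whole argument reduces to bounding the two terms $|M_{G-N[u]\cup N[v_i]}|$ well enough that the right side drops below $f(n)=10^{\frac{n-1}{5}}+f_1(n)$.

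Next I would analyze cases according to whether $v_1v_2\in E(G)$ (so $G[\{u,v_1,v_2\}]=K_3$, but then $v_1$ or $v_2$ would force a pendant triangle unless both have high degree — actually $v_1,v_2$ are cutpoints here since $u$ alone is non-cutpoint, so this needs care) versus $v_1v_2\notin E(G)$, and according to $\min\{d(v_1),d(v_2)\}$. If some $v_i$ has small degree, then $|G-N[u]\cup N[v_i]|$ is large and the corresponding $M$-count is close to $g(n-\text{small})$, so I would instead bound using the component structure of $G-N[u]\cup N[v_i]$: by Lemma~\ref{d43tool}, none of $K_4,K_5,K_5-e,K_6,K_6-e$ appears as a component there, so by Lemma~\ref{botool1} each component $S$ of order $n_s$ satisfies $|M_S|\le f(n_s)\le 0.95\,g(n_s)$, and if that subgraph is disconnected we gain a further factor. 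The arithmetic to push through is: if both $v_1,v_2$ have degree $\ge 3$ then $|G-N[u]\cup N[v_i]|\le n-4$ and a crude bound $f(n-1)+2g(n-4)$ exceeds $f(n)$, so one cannot win by crude bounds alone — one must exploit that $G-N[u]\cup N[v_i]$ is disconnected or has no large complete components, turning $g(n-4)$ into roughly $0.95^2 g(n-4)$ or $g(n-6)$, which then suffices. If $v_1v_2\in E(G)$ one can apply Lemma~\ref{y1} to the edge $uv_1$ or argue directly that a pendant $K_3$-like structure appears, contradicting Corollary~\ref{forbidden}.

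A cleaner route, which I would try first, is to use the twin-making operation: since $u$ has degree $2$, the edge $uv_1$ (or $uv_2$) should satisfy the hypotheses of Lemma~\ref{y1} in most configurations, because the sets $W_1,W_3$ (the private neighbors) are controlled and $u$ itself has only the single private-side vertex to worry about. If $|M_{G_{u\to v_1}}|\ge |M_G|$ or $|M_{G_{v_1\to u}}|\ge|M_G|$ with $\tau$ non-increasing, then either we get a new minimal counterexample in which $u$ has larger degree (a twin of $v_1$) — reducing to Corollary~\ref{d5noncut} or to an already-excluded pendant block — or $\tau$ strictly drops, contradicting minimality. The main obstacle I anticipate is the borderline numeric cases with $n$ small (roughly $15\le n\le 19$) and with $v_1,v_2$ both of degree $3$ or $4$: here the slack between the upper bound and $f(n)$ is thin, so one genuinely needs the structural lemmas (Lemma~\ref{d43tool}, Corollary~\ref{forbidden}, Lemma~\ref{false}) to rule out the complete-graph components that would otherwise saturate the bound, and possibly a direct finite check for the smallest orders. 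I would organize the write-up so that the "twin-making kills it unless $v_i$ has a specific local structure" step comes first, and only the residual tightly-constrained configurations get the explicit inequality-with-$0.95$-factors treatment.
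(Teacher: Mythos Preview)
Your overall architecture matches the paper's: apply Proposition~\ref{bopro1} at a degree-$2$ non-cutpoint, bound $|M_{G-u}|\le f(n-1)$, and control the two remaining terms via Lemma~\ref{d43tool} and Lemma~\ref{botool1}. However, there are two concrete gaps that would make your argument fail as written.

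First, the factor $0.95$ is too weak. Lemma~\ref{d43tool} excludes precisely $K_4,K_5,K_5-e,K_6,K_6-e$ as components; these are exactly the connected graphs with $|M_S|/g(|S|)>0.85$ (for orders $\ge 7$ one has $f(n)/g(n)\le 0.85$ by Lemma~\ref{botool1}(3), and a direct check handles orders $\le 6$ once those five graphs are removed). So the correct per-component factor is $0.85$, not $0.95$. This matters: your claimed sufficient bound $f(n-1)+2\cdot 0.95^{2}g(n-4)$ has leading coefficient $10^{-1/5}+2\cdot 0.9025\cdot 10^{-3/5}\approx 1.084>1$, so it does \emph{not} drop below $f(n)$. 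With $0.85^{2}$ the coefficient is $\approx 0.994$, which is what the paper uses.

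Second, you never secure a neighbour of degree $\ge 3$. The paper's first move is to walk along the chain of degree-$2$ vertices (each of which is also a non-cutpoint) until reaching a non-cutpoint $x$ with $d(x)=2$ and some neighbour $y$ with $d(y)\ge 3$; this is possible because $G\ne C_n$. This choice guarantees one of the two terms is at most $0.85\,g(n-5)$ even when the other neighbour $z$ has $d(z)=2$, giving $f(n-1)+0.85\,g(n-4)+0.85\,g(n-5)<f(n)$. Without this step, the case where both neighbours have degree $2$ produces two $g(n-4)$ terms and the single factor $0.85$ is not enough.

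Two smaller points: the ``pendant $K_3$'' suggestion when $v_1v_2\in E(G)$ is incorrect, since the block containing $u$ is larger than the triangle whenever $v_1$ or $v_2$ has further neighbours; and the twin-making route via Lemma~\ref{y1} is not obviously applicable here, because the hypothesis (no maximal induced matching living in $W_1\cup W_4$ and meeting $W_1$, and symmetrically for $W_3$) need not hold for the edge $uv_i$ when $v_i$ has neighbours outside $N[u]$. The paper does not use twin-making in this lemma at all.
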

\begin{proof} Due to  Lemma \ref{y1}, the minimum degree of $G$  is at least  $2$.
For contradiction that $G$ has a non-cutpoint $u$ of degree $2$, then for any $v\in N(u)$ with
$d(v)=2$, $v$ is also a non-cutpoint of $G$. Since  $G\neq C_n$, there exists a non-cutpoint $x$ of degree $2$
which has a neighbor  $y\in N(x)$ satisfying $d(y)\geq 3.$
Consider
\begin{equation} \label{noncut2}
\begin{aligned}
|M_{G}|\leq |M_{G-x}|+\sum_{p\in N(x)}|M_{G-N[x]\cup N[p]}|.
\end{aligned}
\end{equation}
Let $N(x)= \{y,z\}.$
If $d(z)=2,$ then $y$ and  $z$   are not adjacent in $G$ by Lemma \ref{gpb3}.
So  $|G-N[x]\cup N[z]|\leq n-4$ and further $|M_{G-N[x]\cup N[z]}|\leq f(n-4)\leq 0.85g(n-4)$ when
$G-N[x]\cup N[z]$ is connected and  $n-4\geq 10$ by Lemma \ref{botool1}; $|M_{G-N[x]\cup N[z]}|\leq  0.85^{2} g(n-4)<0.85 g(n-4)$ when $G-N[x]\cup N[z]$ is disconnected by Lemmas \ref{botool1} and \ref{d43tool}. Similarly, $|M_{G-N[x]\cup N[y]}|\leq 0.85g(n-5)$   whenever  $G-N[x]\cup N[y]$ is connected or not.
Recall that $x$ is  a non-cutpoint, then  when $n\geq 15$,
Ineq. (\ref{noncut2}) yields
\begin{equation*}
\begin{aligned}
|M_G|&\leq  f(n-1)+ 0.85g(n-4)+0.85g(n-5)\\
&<0.9792\cdot 10^\frac{n-1}{5}+f_1(n-1) <f(n),
\end{aligned}
\end{equation*}
  a contradiction.

If $d(z)\geq 3,$ then $|G-N[x]\cup N[z]|\leq n-4$ and further $|M_{G-N[x]\cup N[z]}|\leq f(n-4)\leq 0.7218g(n-4)< 0.85^{2} g(n-4)$ when
$G-N[x]\cup N[z]$ is connected and  $n-4\geq 19$ by   Lemma \ref{botool1}; $|M_{G-N[x]\cup N[z]}|\leq  0.85^{2} g(n-4)$ when $G-N[x]\cup N[z]$ is disconnected by Lemmas \ref{botool1} and \ref{d43tool}. Similarly, since $d(y)\geq 3,$ $|M_{G-N[x]\cup N[y]}|\leq 0.85^{2} g(n-4)$ when $n-4\geq 19.$
Consequently, when $n\geq 23$
Ineq. (\ref{noncut2}) yields
\begin{equation*}
\begin{aligned}
|M_G|&\leq  f(n-1)+ 2\cdot 0.85^{2}\cdot g(n-4)\\
&<0.9940\cdot 10^\frac{n-1}{5}+f_1(n-1) <f(n),
\end{aligned}
\end{equation*}
a contradiction.
For
$15\leq n\leq 22,$  $|M_G|\leq f(n)$ by a direct verification.
This contradicts the fact that $G$ is a counterexample.
\end{proof}

\begin{lemma}   \label{zy2}
Let $G$ be a minimal counterexample.  If $u$ is a
non-cutpoint  with $d(u)=4$, then there is exactly one neighbor $v\in N(u)$ such that $N[v]\backslash N[u] \neq \emptyset$.
\end{lemma}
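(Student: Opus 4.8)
The plan is to closely mirror the proof of Lemma~\ref{z1}, now with $d(u)=4$ and $|N[u]|=5$ in place of $d(u)=5$ and $|N[u]|=6$. Put $X:=\{v\in N_G(u)\mid N[v]\setminus N[u]\neq\emptyset\}$. Since $G$ is connected, $|N[u]|\le 5$ and $n\ge 15$, the set $N[u]$ cannot be a union of components of $G$, so $|X|\ge 1$; as $d(u)=4$ we must therefore rule out $|X|\in\{2,3,4\}$. Because $u$ is a non-cutpoint, $G-u$ is connected and $|M_{G-u}|\le f(n-1)$ by Proposition~\ref{boproso}, so Proposition~\ref{bopro1} yields
\[
|M_G|\ \le\ f(n-1)+\sum_{p\in N(u)}|M_{G-N[u]\cup N[p]}|,
\]
in which a summand with $p\notin X$ is at most $g(n-5)$, and a summand with $p=v_i\in X$ and $a_i:=|N[v_i]\setminus N[u]|\ge 1$ is at most $g(n-5-a_i)\le g(n-6)$.

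First I would clear out, for each $|X|\in\{2,3,4\}$, the ``easy'' sub-cases in which some of the $a_i$ are large: expanding the displayed estimate over $10^{(n-1)/5}$ as in the proof of Lemma~\ref{botool1}, together with monotonicity of $f_1$ and the inequalities of Lemmas~\ref{botool1} and \ref{bolemma0011}, one gets $|M_G|<f(n)$ directly, with a handful of small orders checked by hand. What remains is the configuration in which every $v_i\in X$ has $a_i=1$, i.e. a single neighbour $w_i$ in $G-N[u]$, so that $G-N[u]$ has at most $|X|$ components $G_1,\dots,G_j$ and each of them is adjacent to $N[u]$ only through vertices of $X$. Here the decisive observation, exactly as in Lemma~\ref{z1}, is that if some component $G_k$ is met by only one of its own vertices $w$, then $w$ is a cutpoint of $G$, so Lemmas~\ref{gcut5} and \ref{gcut6} force $G_k=K_{|G_k|}$ as soon as $|G_k|\le 6$; then $G_k$ is a pendant block $K_{|G_k|}$ with $3\le|G_k|\le 6$ (and $|G_k|\le 2$ would produce a pendant vertex), contradicting Corollary~\ref{forbidden} and Lemma~\ref{bolem1}. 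Consequently every component of order at most $6$ must be met by at least two of the vertices $w_i$; since there are at most $|X|\le 4$ such edges, this forces a component of order at least $7$, hence a factor $f(\cdot)\le 0.85\,g(\cdot)$ by Lemma~\ref{botool1}, unless $j\le 2$ and $n$ is small; in addition Lemma~\ref{d43tool} forbids $K_4,K_5,K_5-e,K_6,K_6-e$ from occurring as a component of any $G-N[u]\cup N[v_i]$, which trims the $X$-summands below $g(n-6)$. Feeding these improvements back into the displayed bound finishes all but finitely many orders $n$, which are verified directly.

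The genuinely hard part is the case $|X|=2$ with $a_1=a_2=1$, which reproduces Claims~1--5 in the proof of Lemma~\ref{z1}: labelling $X=\{v_1,v_2\}$, one shows successively that any component of $G-N[u]$ of order in $\{4,5,6\}$ is met by at least two of the vertices reaching $G-N[u]$, that $G-N[u]$ is disconnected, that all of its components have order in $\{4,5,6\}$, and finally that the forced structure---each such component being a complete $K_4$, $K_5$ or $K_6$ attached at a cutpoint, which Lemmas~\ref{bopen06}, \ref{bopen05} and \ref{bopen004} forbid---makes $u$ (or a surviving neighbour of $u$) adjacent to every vertex of $G$, contradicting Lemma~\ref{dn-1}. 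In the few remaining sub-cases where the crude count does not yet contradict $|M_G|>f(n)$, I would delete a carefully chosen vertex of $X$ or of a pendant-type component instead of $u$, or apply the twin transformations of Lemmas~\ref{y1}, \ref{y-y1} and \ref{twins} to pass to a connected $G'$ with $|M_{G'}|\ge|M_G|$ and $\tau(G')<\tau(G)$, contradicting the minimality of $G$. The obstacle throughout is precisely this bookkeeping---choosing which vertex to delete and pinning down which components can occur---since each individual inequality is a routine application of Lemma~\ref{botool1} and the displayed estimate.
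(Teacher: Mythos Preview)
Your plan to transplant the proof of Lemma~\ref{z1} wholesale does not survive the drop from $d(u)=5$ to $d(u)=4$. Two points are essential.

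First, the crude summand bounds $g(n-5)$ (for $p\notin X$) and $g(n-6)$ (for $p\in X$) are not good enough here: already for $|X|=4$ the coefficient of $10^{(n-1)/5}$ is $10^{-1/5}+4\cdot 10^{-1}>1$, and the analogue of Claim~2 of Lemma~\ref{z1} fails likewise. The paper does not try to sharpen these bounds via the component analysis of $G-N[u]$ that you outline; instead it invokes Lemma~\ref{d43tool} \emph{at the outset} to conclude that every component $C$ of every $G-N[u]\cup N[p]$ satisfies $|M_C|\le 0.85\,g(|C|)$. This yields a factor $0.85^2$ whenever the subgraph is disconnected (and, for $n$ large, also when it is connected, since then $|M_{\cdot}|\le f(\cdot)\le 0.7218\,g(\cdot)<0.85^2 g(\cdot)$), and this uniform $0.85^2$ is what drives Cases~$|X|=3,4$ and the connected sub-case of $|X|=2$. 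Your use of Lemma~\ref{d43tool} only to ``trim the $X$-summands'' misses that it is the main engine throughout.

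Second, and more seriously, your treatment of the residual case $|X|=2$, $a_1=a_2=1$, $G-N[u]$ disconnected, is not a valid reproduction of Claims~1--5 of Lemma~\ref{z1}. With only two contact vertices $w_1\in G_1$ and $w_2\in G_2$, each component is met by exactly one $w_i$, so your Claim~1 analogue already rules out orders in $\{4,5,6\}$; you cannot then also establish that ``all components have order in $\{4,5,6\}$'', and the conclusion ``$u$ adjacent to every vertex of $G$'' is impossible since $d(u)=4<n-1$. The paper does something entirely different here: since $N[v_3],N[v_4]\subseteq N[u]$, the vertices $v_3,v_4$ are non-cutpoints, so Corollary~\ref{d5noncut} and Lemma~\ref{yyn2} force $d(v_3),d(v_4)\in\{3,4\}$. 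If $d(v_3)=3$, Lemma~\ref{false} gives $v_3v_4\in E(G)$, and either the twin operation of Lemma~\ref{y-y1} applied to $uv_3$ contradicts minimality, or a recount at $v_3$ gives $|M_G|\le f(n-1)+3\cdot 0.85^2 g(n-5)<f(n)$. If $d(v_3)=d(v_4)=4$, the paper partitions $M_G$ according to which of $v_1,v_2$ are covered (four pieces) and bounds each piece separately, distinguishing $v_1v_2\in E(G)$ from $v_1v_2\notin E(G)$. None of these ingredients---Lemma~\ref{yyn2}, Lemma~\ref{false}, or the four-way partition by $(v_1,v_2)$---appears in your plan, and they are precisely what closes the case.
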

\begin{proof} Recall that
\begin{equation}\label{noncut4case1}
\begin{aligned}
|M_G|&\leq  |M_{G-u}|+\sum_{p\in N(u)}|M_{G-N[u]\cup N[p]}|.
\end{aligned}
\end{equation}
By Lemmas \ref{botool1} and \ref{d43tool}, it holds that for any $p\in N(u)$ and
any component $C$ on  $n_c$  vertices of the subgraph $G-N[u]\cup N[p]$, $|M_C|\leq f(n_c)\leq 0.85g(n_c).$
Denote by $N(u)=\{v_1,v_2,v_3,v_4\}$ and $X:=\{v_i\in N(u)~|~|N(v_i)\backslash N[u]|\geq1\}$.
We claim that if $|X|\geq 2$, then $|M_G|\leq f(n)$, contradicting the choice of $G$.

\noindent{\bf Case 1.} {\it  $|X|=3$ and let $X=\{v_1,v_2,v_3\}.$}

Note that $N[v_4]\subseteq N[u]$, if $G-N[u]$  is connected, then
$|M_{G-N[u]\cup N[v_4]}|\leq f(n-5)<0.7218g(n-5)<0.85^{2}g(n-5)$ when $n-5\geq 19$ by Lemma \ref{botool1};  if $G-N[u]$  is disconnected, then $|M_{G-N[u]\cup N[v_4]}|\leq 0.85^{2}g(n-5)$. Thus when $n-5\geq 19$, $|M_{G-N[u]\cup N[v_4]}|\leq 0.85^{2}g(n-5)$ no matter whether  $G-N[u]$  is connected or not.
Similarly, when $n-6\geq 19$,
for each $v_i\in X$,  $|M_{G-N[u]\cup N[v_i]}|\leq 0.85^{2}g(n-6)$.
Together with the fact that  $u$ is a non-cutpoint,   when $n\geq 25$, Ineq. (\ref{noncut4case1}) yields $|M_G|\leq  f(n-1)+3\cdot 0.85^{2}g(n-6)+0.85^{2}g(n-5)<0.9623\cdot 10^\frac{n-1}{5}+f_1(n-1)<f(n).$
When $15\leq n\leq 24$, $|M_G|\leq f(n)$ by a direct verification.
It follows that $|X|\neq 3$.

\noindent {\bf Case 2.} {\it  $|X|=4$.}

It is easy to see that the upper bound on $|M_G|$ in this case is smaller than that in Case 1. This implies that  $|M_G|\leq f(n)$ if $|X|=4$, which contradicts the choice of $G$.

\noindent {\bf Case 3.} {\it  $|X|=2$ and let $X=\{v_1,v_2\}$.}

Suppose first that $G-N[u]$ is  connected, since $N[v_3]\subseteq N[u]$,
$|M_{G-N[u]\cup N[v_3]}|\leq f(n-5)$.
Similar to the discussion in Case 1, when $n\geq 25$, Ineq. (\ref{noncut4case1}) yields
$|M_G|\leq f(n-1)+f(n-5)+0.85^{2}g(n-5)+2\cdot 0.85^{2}g(n-6)<0.9900\cdot 10^\frac{n-1}{5}+f_1(n-1)+ f_1(n-5)<f(n)$. When $15\leq n\leq 24$, $|M_G|\leq f(n)$ by a direct verification.

Suppose now that $G-N[u]$ is  disconnected.  If there is some $v_i\in X$  such that $|N(v_i)\backslash N[u]|\geq 2,$ then when $n\geq 26$, Ineq. (\ref{noncut4case1}) yields
$|M_G|\leq f(n-1)+2\cdot0.85^{2}g(n-5)+ 0.85^{2}g(n-7)+0.85^{2}g(n-6)<f(n)$.
When $15\leq n\leq 25$, $|M_G|\leq f(n)$ by a direct verification.
Now consider  $|N(v_1)\backslash N[u]|=|N(v_2)\backslash N[u]|=1.$
In this case, $v_1, v_2$ are both cutpoints  and $N(v_1)\backslash N[u]\neq N(v_2)\backslash N[u]$.
The facts that $u$ is non-cutpoint, $N[v_3]\subseteq N[u]$ and  $N[v_4]\subseteq N[u]$ imply that  $v_3, v_4$ are both non-cutpoints. So $d_G(v_3),d_G(v_4)\in \{3,4\}$ by Corollary \ref{d5noncut} and
Lemma \ref{yyn2}. First, assume  that   one of $v_3$ and $v_4$, say $v_3$, satisfies $d_G(v_3)=3$.
Then $v_3v_4\in E(G)$, otherwise $v_3, v_4$ are false twins which contradicts Lemma  \ref{false}.
If  $v_3$ and $v_4$ are twins, then  $v_1v_2\in E(G)$ as $G-u$ is connected.
Applying Lemma  \ref{y-y1} to the edge $uv_3$, we obtain that either $|M_{G_{u\rightarrow v_3}}|\geq|M_G|$
with $\tau(G_{u\rightarrow v_3})< \tau(G)$ or $|M_{G_{T_{G}(v_3)\rightarrow u}}|\geq|M_G|$ with $\tau(G_{T_{G}(v_3)\rightarrow u})< \tau(G)$. Both of the two cases contradict the choice of $G$.
Assume now that $v_3$ and $v_4$ are not twins, then one can see that for any neighbor  $p\in N(v_3)$,
$|G-N[v_3]\cup N[p]|\leq n-5.$
Further, it is easy to verify that
$|M_G|\leq |M_{G-v_3}|+\sum_{p\in N(v_3)}|M_{G-N[v_3]\cup N[p]}|\leq f(n-1)+3\cdot0.85^{2}g(n-5)<f(n).$

Finally, it suffices to consider the remaining case that $d_G(v_3)=d_G(v_4)=4.$
If $v_1v_2\in E(G)$, by Lemmas \ref{botool1} and \ref{d43tool}, we have
\begin{align*}
  |M_G| & \leq |M_G(\{v_1\},\{v_2\})|+| M_G(\{v_2\},\{v_1\})| +|M_G(\emptyset,\{v_1, v_2\})| +|M_G(\{v_1, v_2\}, \emptyset)|\\
   & \leq 2\cdot  \left( 3\cdot 0.85^2g(n-6)+ g(n-7)\right)+ 0.85^2g(n-7)+3\cdot 0.85^2g(n-5)\\
   &<0.9489\cdot 10^\frac{n-1}{5} < f(n),
\end{align*}
contradicting the choice of $G$.
If $v_1v_2\notin E(G)$, by Lemmas \ref{botool1} and \ref{d43tool}, we have
\begin{align*}
  |M_G| & \leq |M_G(\{v_1\},\{v_2\})|+| M_G(\{v_2\},\{v_1\})| +|M_G(\emptyset,\{v_1, v_2\})| +|M_G(\{v_1, v_2\}, \emptyset)|\\
   & \leq 2\cdot  \left( 3\cdot 0.85^2g(n-6)+ g(n-7)\right)+ g(n-9)+3\cdot 0.85^2g(n-5)\\
   &<0.9284\cdot 10^\frac{n-1}{5} < f(n),
\end{align*}
yielding the desired contradiction. Thus $|X|\neq 2$.
This completes the proof of Lemma \ref{zy2}.
\end{proof}

\begin{corollary}  \label{d54noncut}
Let $G$ be a minimal counterexample.  Then  each non-cutpoint of $G$ is degree $3$.
\end{corollary}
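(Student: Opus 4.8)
The plan is to derive Corollary \ref{d54noncut} by assembling the degree restrictions on non-cutpoints that were established in the preceding lemmas, and then eliminating the single remaining possibility, degree $4$.

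First I would recall that Corollary \ref{d5noncut} already gives $2\le d(u)\le 4$ for every non-cutpoint $u$, and Lemma \ref{yyn2} rules out $d(u)=2$. Hence the whole corollary reduces to showing that $G$ has no non-cutpoint of degree $4$. So I would argue by contradiction: assume $u$ is a non-cutpoint with $d(u)=4$, and write $N(u)=\{v,v_2,v_3,v_4\}$, where by Lemma \ref{zy2} we may take $v$ to be the unique neighbour with $N[v]\setminus N[u]\neq\emptyset$; thus $N[v_i]\subseteq N[u]$ for $i=2,3,4$.

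The key structural observation is then that in $G-v$ the set $N[u]\setminus\{v\}=\{u,v_2,v_3,v_4\}$ induces a connected subgraph whose vertices have no neighbours outside itself, so it is an entire component of $G-v$; since $|V(G)|\ge 15$ by Theorem \ref{g13}, this component is proper, so $G-v$ is disconnected and $v$ is a cutpoint. Letting $G_1$ be the component of $G-v$ containing $u$, we get $V(G_1)=N[u]\setminus\{v\}$ and $|G_1+v|=5$, so Lemma \ref{gcut5} forces $G[N[u]]=G_1+v=K_5$. Because $v$ is a cutpoint and $K_5$ is $2$-connected, $G[N[u]]$ is a block of $G$, and its vertices $u,v_2,v_3,v_4$ are all non-cutpoints (each $v_i$ sends no edge outside $N[u]$, and $G-v_i$ stays connected through the edge $uv$), so $v$ is its only cutpoint. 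Thus $K_5$ is a pendant block of $G$, contradicting Corollary \ref{forbidden}. Hence no non-cutpoint has degree $4$, and together with the first paragraph every non-cutpoint of $G$ has degree exactly $3$.

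There is no genuine obstacle here: the statement is essentially bookkeeping over Corollary \ref{d5noncut}, Lemma \ref{yyn2}, Lemma \ref{zy2}, Lemma \ref{gcut5}, and Corollary \ref{forbidden}. The only point needing a little care is the verification that $v$ is truly a cutpoint and that the resulting $K_5$ is \emph{pendant} (i.e., that $v_2,v_3,v_4$ are not cutpoints), which is handled by using the edge $uv$ and the order bound $n\ge 15$.
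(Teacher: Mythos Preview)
Your proof is correct and follows essentially the same approach as the paper: combine Corollary~\ref{d5noncut} and Lemma~\ref{yyn2} to reduce to degree $4$, apply Lemma~\ref{zy2} to isolate the unique neighbour $v$, observe that $v$ is a cutpoint, invoke Lemma~\ref{gcut5} to get $G[N[u]]=K_5$, and derive a contradiction with the forbidden pendant $K_5$. The only cosmetic differences are that the paper cites Lemma~\ref{bopen05} directly rather than Corollary~\ref{forbidden}, and that you spell out more carefully why $v$ is a cutpoint and why the resulting $K_5$ is pendant; these extra justifications are welcome and do not change the argument.
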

\begin{proof}
Combining Corollary \ref{d5noncut} with Lemma \ref{yyn2}, we deduce that the degree of  non-cutpoints is either $3$ or $4$. Lemma \ref{zy2} further specifies that if $u$ is a non-cutpoint of degree $4$, then there is exactly one neighbor $v\in N(u)$ such that $N[v]\backslash N[u] \neq \emptyset$. Observe that $v$ is a cutpoint of $G$, and thus the subgraph $G[N[u]]= K_5$ by Lemma \ref{gcut5}, contradicting Lemma \ref{bopen05}. The result follows.
\end{proof}

\begin{lemma}   \label{zy3}
Let $G$ be a minimal counterexample.  If $u$ is a
non-cutpoint  with $d(u)=3$, then there is exactly one neighbor $v\in N(u)$ such that $N[v]\backslash N[u] \neq \emptyset$.
\end{lemma}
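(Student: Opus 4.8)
The plan is to follow the same scheme as in Lemmas~\ref{z1} and \ref{zy2}. Write $N(u)=\{v_1,v_2,v_3\}$ and set $X=\{v_i\in N(u)\mid N[v_i]\setminus N[u]\neq\emptyset\}$; since $G$ is connected with $n\geq 15$ we cannot have $X=\emptyset$ (otherwise $G=G[N[u]]$ has only four vertices), so $|X|\geq 1$, and the assertion is that $|X|=1$. Assume for contradiction that $|X|\geq 2$; I will deduce $|M_G|\leq f(n)$. The engine is Proposition~\ref{bopro1} at $u$: as $u$ is a non-cutpoint, $G-u$ is connected, so $|M_{G-u}|\leq f(n-1)$ by Proposition~\ref{boproso}, and
\[
|M_G|\ \le\ f(n-1)+\sum_{p\in N(u)}\bigl|M_{G-N[u]\cup N[p]}\bigr|.
\]
For $v_i\in X$ the graph $G-N[u]\cup N[v_i]$ has at most $n-5$ vertices, while for $v_i\notin X$ the corresponding term equals $|M_{G-N[u]}|$ on $n-4$ vertices. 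By Lemmas~\ref{botool1} and \ref{d43tool}, none of $K_4,K_5,K_5-e,K_6,K_6-e$ occurs as a component of any $G-N[u]\cup N[p]$, so every component $C$ on $m_C$ vertices satisfies $|M_C|\le 0.85\,g(m_C)$; hence such a subgraph on $m$ vertices contributes at most $f(m)$ when it is connected and at most $0.85^2 g(m)$ when it is disconnected.

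Case $|X|=3$. Every term is then at most $\max\{f(n-5),\,0.85^2 g(n-5)\}$, so once $n$ is large enough that $f(n-5)\le 0.85^2 g(n-5)$ we obtain $|M_G|\le f(n-1)+3\cdot 0.85^2 g(n-5)$; the coefficient of $10^{(n-1)/5}$ on the right is $10^{-1/5}+3\cdot 0.85^2\cdot 10^{-4/5}<1$ and $f_1(n-1)<f_1(n)$, so this is $<f(n)$, and the finitely many remaining $n$ are disposed of by direct verification exactly as in Lemma~\ref{zy2}. Thus no non-cutpoint of degree $3$ has $|X|=3$.

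Case $|X|=2$. Write $X=\{v_1,v_2\}$, so $N[v_3]\subseteq N[u]$, whence $N(v_3)\subseteq\{u,v_1,v_2\}$ and $v_3$ is a non-cutpoint of $G$ (deleting it leaves $u,v_1,v_2$ joined through $u$ and everything else joined through $v_1$ or $v_2$). By Corollary~\ref{d54noncut}, $d(v_3)=3$, so $N(v_3)=\{u,v_1,v_2\}$, $N[v_3]=N[u]$ (so $u,v_3$ are twins), and $G[N[u]]$ is $K_4$ or $K_4-v_1v_2$. If $v_1v_2\notin E(G)$ and one of $v_1,v_2$, say $v_1$, is a non-cutpoint, then $d(v_1)=3$ forces $N(v_1)=\{u,v_3,w_1\}$ with $w_1\notin N[u]$, and a direct check gives $\{x\in N(v_1)\mid N[x]\setminus N[v_1]\neq\emptyset\}=\{u,v_3,w_1\}$, of size $3$, contradicting the case just settled. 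Hence we may assume that both $v_1$ and $v_2$ are cutpoints (this is automatic when $v_1v_2\in E(G)$, since then a non-cutpoint neighbour of $u$ would already have all three of its neighbours in $N[u]$, contradicting its membership in $X$).

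It remains to rule out $|X|=2$ with $v_1,v_2$ cutpoints, and this is where the real difficulty lies. The natural tool is twin-contraction along an edge incident with $u$, say $uv_1$, exploiting the rigid local structure just established. What makes the hypothesis of Lemma~\ref{y-y1} tractable is that $N(v_3)=\{u,v_1,v_2\}$ and $N(u)=\{v_1,v_2,v_3\}$ both lie in $N[u]$, so a maximal induced matching of $G$ avoiding all of $u,v_1,v_2,v_3$ could always be enlarged by the edge $uv_3$; this rules out the ``$W_1$-type'' and, when $v_1v_2\in E(G)$, also the ``$W_3$-type'' obstruction, so Lemma~\ref{y-y1} produces a graph $G'$ with $|M_{G'}|\geq|M_G|$ and $\tau(G')<\tau(G)$ by Lemma~\ref{twins}. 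When the relevant operation keeps $G'$ connected (it is a supergraph of $G$ in the direction $G_{T_G(u)\to v_1}$) this already contradicts minimality, and in the direction $G_{T_G(v_1)\to u}$, which severs the branches hanging off $v_1$, one falls back on Corollary~\ref{forbidden}: each such branch then has at least $7$ vertices, and $|M_G|$ is bounded by a product $f(\cdot)\cdot 0.85^{k}\prod g(\cdot)<f(n)$. When $v_1v_2\notin E(G)$ (so $G[N[u]]=K_4-v_1v_2$) the ``$W_3$-type'' obstruction for $uv_1$ need not vanish — there may be a maximal induced matching covering $v_2$ and confined to $\{v_2\}\cup\bigl(V(G)\setminus(N[u]\cup N[v_1])\bigr)$ — and one must instead verify the hypotheses of Lemma~\ref{y1} for another edge (using that $v_1$ and $v_2$ play symmetric roles here) or run a refined recursion on $v_1$, again invoking Corollary~\ref{forbidden} to keep the branches at $v_1$ large. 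I expect this last sub-case, together with the small-order verifications, to be the main obstacle. In every alternative one reaches a contradiction, so $|X|=1$, which proves the lemma.
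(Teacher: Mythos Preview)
Your treatment of $|X|=3$ matches the paper's. The genuine gap is in $|X|=2$: after reducing to the situation where $v_1,v_2$ are cutpoints, you propose to use twin-contraction along $uv_1$ via Lemma~\ref{y-y1}, but you do not carry this out and you yourself flag it as the ``main obstacle''. There is a real difficulty here that you underestimate: Lemma~\ref{y-y1} only guarantees that \emph{one} of $|M_{G_{T_G(u)\to v_1}}|\geq|M_G|$ or $|M_{G_{T_G(v_1)\to u}}|\geq|M_G|$ holds, and you do not get to choose which. In the second direction, making the cutpoint $v_1$ into a twin of $u$ deletes the edges from $v_1$ to $G-N[u]$ and typically disconnects the graph; a disconnected graph with many maximal induced matchings does not contradict the minimality of $G$ (counterexamples are required to be connected), so this branch gives you nothing. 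Your fallback ``one falls back on Corollary~\ref{forbidden}: each such branch then has at least $7$ vertices'' is not an argument --- it is a hope, and you have not bounded $|M_G|$ from it. The subcase $v_1v_2\notin E(G)$, where you concede the $W_3$-obstruction need not vanish, is even more speculative.

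The paper does not use twin-contraction here at all. After reducing (via connectivity of $G-N[u]$ and the size of $|N(v_i)\setminus N[u]|$) to $|N(v_1)\setminus N[u]|=|N(v_2)\setminus N[u]|=1$ with both $v_i$ cutpoints, it splits $M_G$ into the four blocks $M_G(\{v_1\},\{v_2\})$, $M_G(\{v_2\},\{v_1\})$, $M_G(\emptyset,\{v_1,v_2\})$, $M_G(\{v_1,v_2\},\emptyset)$ and bounds each directly using Lemmas~\ref{botool1} and \ref{d43tool}, treating $v_1v_2\in E(G)$ and $v_1v_2\notin E(G)$ separately. Each block is controlled by a product of $g(\cdot)$'s with enough $0.85$ factors to push the total below $f(n)$. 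This is a short computation with no structural subtlety; you should replace your twin-contraction sketch with it.
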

\begin{proof}
Recall that
\begin{equation}\label{noncut3case1}
\begin{aligned}
|M_G|&\leq  |M_{G-u}|+\sum_{p\in N(u)}|M_{G-N[u]\cup N[p]}|.
\end{aligned}
\end{equation}
By Lemmas \ref{botool1} and \ref{d43tool}, it holds that for any $p\in N(u)$ and
any component $C$ on  $n_c$  vertices of subgraph $G-N[u]\cup N[p]$, $|M_C|\leq f(n_c)\leq 0.85g(n_c).$
Denote by $N(u)=\{v_1,v_2,v_3\}$ and $X=\{v_i\in N(u)~|~|N(v_i)\backslash N[u]|\geq1\}$.
We claim that if $|X|\geq 2$, then $|M_G|\leq f(n)$, contradicting the choice of $G$.

\noindent {\bf Case 1.} {\it  $|X|=3$.}

Similar to the discussion of Case 1 in Lemma \ref{zy2}.
When $n\geq 24$, Ineq. (\ref{noncut3case1}) yields
$|M_G|\leq  f(n-1)+3\cdot 0.85^{2}g(n-5)<0.9745\cdot 10^\frac{n-1}{5}+f_1(n-1)<f(n)$. For
$15\leq n\leq 23,$  $|M_G|\leq f(n)$ by a direct verification. It follows that $|X|\neq 3$.

\noindent {\bf Case 2.} {\it  $|X|=2$ and let $X=\{v_1,v_2\}$.}

Since $u$ is a non-cutpoint and $N[v_3]\subseteq N[u],$  $v_3$ is also a non-cutpoint and thus $d_G(v_3)=3$ by Corollary   \ref{d54noncut}. If   $G-N[u]$  is connected, then it is easy to see that
$v_1, v_2$ are both non-cutpoints with $d_G(v_1)=d_G(v_2)=3$. Now the place of $v_1$ is equivalent to $u.$   However, any neighbor $p\in N(v_1)$ satisfies $|N(p)\backslash N[v_1]|\geq 1$, which contradicts   Case $1$. So we find that $G-N[u]$  is disconnected. If there exists some $v_i\in X$ with $|N(v_i)\backslash N[u]|\geq 2$, then when $n\geq 25$, Ineq. (\ref{noncut3case1}) yields
\begin{equation*}
\begin{aligned}
|M_G|&\leq  f(n-1)+ 0.85^{2}g(n-4)+0.85^{2}g(n-6)+0.85^{2}g(n-5)\\
&<0.9992\cdot 10^\frac{n-1}{5}+f_1(n-1)<f(n).
\end{aligned}
\end{equation*} For
$15\leq n\leq 24,$  $|M_G|\leq f(n)$ by a direct verification.
It follows that $|N(v_1)\backslash N[u]|=|N(v_2)\backslash N[u]| =1$ and they are both cutpoints as $G-N[u]$  is disconnected.
If $v_1v_2\in E(G)$, then by Lemmas \ref{botool1} and \ref{d43tool}, we have
\begin{align*}
  |M_G| & \leq |M_G(\{v_1\},\{v_2\})|+| M_G(\{v_2\},\{v_1\})| +|M_G(\emptyset,\{v_1, v_2\})| +|M_G(\{v_1, v_2\}, \emptyset)|\\
   & \leq 2\cdot  \left( 2\cdot 0.85^2g(n-5)+ g(n-6)\right)+ 0.85^2g(n-6)+ 0.85^2g(n-4)\\
   &<0.9118\cdot 10^\frac{n-1}{5} < f(n),
\end{align*}which contradicts the choice of $G$.
If $v_1v_2\notin E(G)$, then by Lemmas \ref{botool1} and \ref{d43tool}, we have
\begin{align*}
  |M_G| & \leq |M_G(\{v_1\},\{v_2\})|+| M_G(\{v_2\},\{v_1\})| +|M_G(\emptyset,\{v_1, v_2\})| +|M_G(\{v_1, v_2\}, \emptyset)|\\
   & \leq 2\cdot  \left( 2\cdot 0.85^2g(n-5)+ g(n-6)\right)+ g(n-8)+  0.85^2g(n-4)\\
   &<0.8794\cdot 10^\frac{n-1}{5} < f(n),
\end{align*}
which contradicts the choice of $G$. Then $|X|\neq 2$.
This completes the proof of Lemma \ref{zy3}.
\end{proof}

%

\begin{corollary}  \label{d3noncut}
Let $G$ be a minimal counterexample.  Then $G$ does not have non-cutpoints.
\end{corollary}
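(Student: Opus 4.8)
The plan is to chain together the structural facts already proved about non-cutpoints, so the argument is short. Assume for contradiction that the minimal counterexample $G$ has a non-cutpoint $u$. By Corollary \ref{d54noncut} we have $d(u)=3$; write $N(u)=\{v_1,v_2,v_3\}$. By Lemma \ref{zy3}, exactly one of these neighbours, say $v_1$, satisfies $N[v_1]\setminus N[u]\ne\emptyset$, so that $N[v_2]\subseteq N[u]$ and $N[v_3]\subseteq N[u]$, i.e. $N(v_2)\subseteq\{u,v_1,v_3\}$ and $N(v_3)\subseteq\{u,v_1,v_2\}$.

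Next I would locate a forbidden pendant block. Since every edge incident to $v_2$ or $v_3$ stays inside $\{u,v_1,v_2,v_3\}$, any path from $u$ to $V(G)\setminus N[u]$ must use $v_1$; as $|V(G)|\ge 15>4=|N[u]|$, the graph $G-v_1$ is disconnected and the component $G_1$ of $G-v_1$ containing $u$ is exactly $G[\{u,v_2,v_3\}]$, so $|G_1+v_1|=4$. In particular $v_1$ is a cutpoint of $G$, and since $|G_1+v_1|\le 5$, Lemma \ref{gcut5} forces $G_1+v_1=G[\{u,v_1,v_2,v_3\}]=K_4$. This $K_4$ is $2$-connected and has $v_1$ as its unique cutpoint in $G$, hence it is a pendant block $K_4$ of $G$, contradicting Corollary \ref{forbidden}. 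Therefore $G$ has no non-cutpoints.

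There is essentially no obstacle left at this stage: the genuinely hard work has been absorbed into Lemma \ref{zy3} (which reduces to a single "exit" neighbour) and Corollary \ref{d54noncut} (which pins the degree to $3$, ultimately resting on Lemmas \ref{gcut5}, \ref{bopen05} and the pendant-block lemmas). In the write-up I would only need to be careful to justify that $v_1$ is a cutpoint — using $|V(G)|\ge 15$ together with $N[v_2],N[v_3]\subseteq N[u]$ — and then cite Lemma \ref{gcut5} and Corollary \ref{forbidden} to finish.
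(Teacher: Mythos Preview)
Your argument is correct and follows exactly the same route as the paper's proof: use Corollary~\ref{d54noncut} to force $d(u)=3$, use Lemma~\ref{zy3} to reduce to a single ``exit'' neighbour which is then a cutpoint, apply Lemma~\ref{gcut5} to conclude $G[N[u]]=K_4$, and contradict the pendant-block prohibition. Your write-up is in fact slightly more careful than the paper's in justifying that $v_1$ is a cutpoint, and your citation of Corollary~\ref{forbidden} (equivalently Lemma~\ref{bopen004}) is the right one --- the paper's reference to Lemma~\ref{bopen05} at this spot appears to be a typo.
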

\begin{proof}
If not, let $u$ be  a non-cutpoint of  $G$. Then $d(u)=3$ by Corollary  \ref{d54noncut}.
Lemma \ref{zy3} shows that   there is exactly one neighbor $v\in N(u)$ such that $N[v]\backslash N[u] \neq \emptyset$. It is clear that $v$ is a cutpoint of $G$, and thus the subgraph $G[N[u]]= K_4$ by Lemma \ref{gcut5}, which contradicts   Lemma \ref{bopen05}.
\end{proof}


\begin{proof}[\bf Proof of Theorem~\ref{main1}.]
Let $u$ be  a non-cutpoint of  $G$.  Then $u$ does not exist by  Corollary  \ref{d3noncut}.
This implies that   $G$ does not exist and
the result follows.

It remains to show that the bound in Theorem \ref{main1} is the best possible.
For  $n\equiv 1~(\bmod~5)$ and $15\leq n\le 30$, let $G=K_6\ast \frac{n-6}{5}K_5$;
for $n\equiv 1~(\bmod~5)$ and $n\geq 31$, let $G=K_1\ast \frac{n-1}{5}K_5$.
By Corollary \ref{bocor2}, $|M_G|=f(n)$.
This completes the proof of Theorem \ref{main1}.
\end{proof}

\section{Concluding remarks}
In this paper, we establish upper bounds on the number of maximal and maximum induced matchings in connected graphs of order $n$.
We identify the extremal graphs for maximal induced matchings for $1\le n\le 13$ and $n\ge 14$ with $n \equiv 1 ~({\rm mod}~5)$ and maximum induced matchings for $1\le n\le 8$ and $n\ge 31$ with $n \equiv 1 ~({\rm mod}~5)$. Finally, we conjecture the exact value of the maximum number of maximal induced matchings in a connected graph of order $n$ as follows.
\begin{conjecture}
Let $G$ be a connected graph of order $n$. Then $|M_G|\leq I(n)$
and the equality holds if and only if $G= F_n$, where
  \begin{align*}
I(n):=
\begin{cases}
{n\choose2} &~ {\rm if}~ 1\le n\le 8; \\
{\binom{\lfloor\frac{n}{2}\rfloor}{2}\cdot\binom{\lceil\frac{n}{2}\rceil}{2}-(\lfloor\frac{n}{2}\rfloor-1)\cdot(\lceil\frac{n}{2}\rceil-1)+1} &~ {\rm if}~ 9\le n\le 13; \\
36\cdot10^{\frac{n-9}{5}}+\frac{3n+363}{5}\cdot6^{\frac{n-14}{5}} &~ {\rm if}~ n=4~ ({\rm mod}~5) ~ {\rm and} ~ 14\le n\le 44; \\
36\cdot10^{\frac{n-9}{5}}+\frac{9n+99}{5}\cdot6^{\frac{n-14}{5}} &~ {\rm if}~ n=4~ ({\rm mod}~5)~ {\rm and}~  n> 44;\\
6\cdot10^{\frac{n-5}{5}}+\frac{n+115}{5}\cdot6^{\frac{n-10}{5}} & ~ {\rm if}~n=0~ ({\rm mod}~5)~ {\rm and} ~  15\le n\le 50; \\
6\cdot10^{\frac{n-5}{5}}+\frac{3n+15}{5}\cdot6^{\frac{n-10}{5}} &~ {\rm if}~ n=0~ ({\rm mod}~5)~ {\rm and} ~  n> 50; \\
10^{\frac{n-1}{5}}+\frac{n+144}{30}\cdot6^{\frac{n-6}{5}} &~ {\rm if}~ n=1~ ({\rm mod}~5) ~ {\rm and} ~ 16\le n\le 30; \\
10^{\frac{n-1}{5}}+\frac{n-1}{5}\cdot6^{\frac{n-6}{5}} &~ {\rm if}~ n=1~ ({\rm mod}~5) ~ {\rm and} ~  n> 30; \\
15\cdot10^{\frac{n-7}{5}}+\frac{n+141}{3}\cdot6^{\frac{n-12}{5}} &~ {\rm if}~ n=2~ ({\rm mod}~5) ~ {\rm and} ~ 17\le n\le 32; \\
15\cdot10^{\frac{n-7}{5}}+(2n-8)\cdot6^{\frac{n-12}{5}} &~ {\rm if}~ n=2~ ({\rm mod}~5) ~ {\rm and} ~  n> 32;\\
225\cdot10^{\frac{n-13}{5}}+\frac{5n+690}{9}\cdot6^{\frac{n-13}{5}} &~ {\rm if}~ n=3~ ({\rm mod}~5) ~{\rm and}~  18 \le n\le 33; \\
225\cdot10^{\frac{n-13}{5}}+\frac{10n-70}{3}\cdot6^{\frac{n-13}{5}} & ~ {\rm if}~ n=3~ ({\rm mod}~5)~ {\rm and} ~  n> 33. \\
\end{cases}
\end{align*}
and
\begin{align*}
F_n:=
\begin{cases}
K_n &~ {\rm if}~ 1\le n\le 7; \\
K_8~{\rm or}~ K_4\ast K_4 &~ {\rm if}~ n=8; \\
K_{\lfloor \frac{n}{2}\rfloor}\ast K_{\lceil \frac{n}{2} \rceil}
 &~ {\rm if}~ 9\le n\le 13; \\
K_5\ast(\frac{n-9}{5}K_5\cup K_4) &~ {\rm if}~ n=4~ ({\rm mod}~5) ~ {\rm and} ~ 14\le n\le 44; \\
K_1\ast(\frac{n-9}{5}K_5\cup 2K_4) &~ {\rm if}~ n=4~ ({\rm mod}~5)~ {\rm and}~  n> 44;\\
K_5\ast\frac{n-5}{5}K_5 & ~ {\rm if}~n=0~ ({\rm mod}~5)~ {\rm and} ~  15\le n\le 50; \\
K_1\ast(\frac{n-5}{5}K_5\cup K_4) &~ {\rm if}~ n=0~ ({\rm mod}~5)~ {\rm and} ~  n> 50; \\
K_6\ast\frac{n-6}{5}K_5 &~ {\rm if}~ n=1~ ({\rm mod}~5) ~ {\rm and} ~ 16\le n\le 30; \\
K_1\ast\frac{n-1}{5}K_5 &~ {\rm if}~ n=1~ ({\rm mod}~5) ~ {\rm and} ~  n> 30; \\
K_6\ast(\frac{n-12}{5}K_5\cup K_6) &~ {\rm if}~ n=2~ ({\rm mod}~5) ~ {\rm and} ~ 17\le n\le 32; \\
K_1\ast(\frac{n-7}{5}K_5\cup K_6) &~ {\rm if}~ n=2~ ({\rm mod}~5) ~ {\rm and} ~  n> 32;\\
K_6\ast(\frac{n-18}{5}K_5\cup 2K_6) &~ {\rm if}~ n=3~ ({\rm mod}~5) ~{\rm and}~  18 \le n\le 33; \\
K_1\ast(\frac{n-13}{5}K_5\cup 2K_6) & ~ {\rm if}~ n=3~ ({\rm mod}~5)~ {\rm and} ~  n> 33. \\
\end{cases}
\end{align*}

\end{conjecture}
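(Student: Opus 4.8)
The plan is to run the same minimal-counterexample machinery that proves Theorem~\ref{main1}, now measured against $I(n)$ instead of $f(n)$ and against the enlarged family $F_n$. Assume the conjecture fails and, among all connected graphs $G$ with $|M_G|>I(n)$, pick one minimal with respect to $(|V(G)|,\tau(G))$ in the lexicographic order used in Section~3. The endgame should be identical in spirit: once one shows a minimal counterexample can have no non-cutpoint (the analogue of Corollary~\ref{d3noncut}), one is done, since every connected graph on at least two vertices has a non-cutpoint. So the whole argument is again a chain of ``incompatible properties'', and the work lies in redoing Properties A--D against the eleven-branch function $I(n)$.

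First I would dispose of small orders. For $n\le 13$ the claimed $I(n)$ and $F_n$ agree with $f(n)$ and the graphs of Theorem~\ref{main1}, so the argument of Theorem~\ref{g13} (a degree split on a maximum-degree vertex, the recursion of Proposition~\ref{bopro1}, and the sharp global bound $q(n)$ of Theorem~\ref{TUGONG}) transfers; for $14\le n\le 17$ or so a finite check pins down $K_5\ast(\tfrac{n-9}{5}K_5\cup K_4)$, $K_6\ast(\cdots)$, etc. Then, for a large minimal counterexample, Property~A follows exactly as in Lemmas~\ref{bolem1} and~\ref{dn-1} (kill a leaf by twin-contraction; kill a vertex of degree $n-1$ from $I(n-1)+(n-1)\le I(n)$), and Property~B follows as in Lemma~\ref{bolem6c} from $I(n-1)+5g(n-7)+g(n-8)<I(n)$. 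All of this, and everything downstream, needs an $I$-versus-$g$ analogue of Lemma~\ref{botool1}: explicit constants $c_r<1$, one per residue $r=n\bmod 5$ plus threshold-dependent refinements near $n=30,32,33,44,50$, with $I(n)\le c_r\,g(n)$, together with monotonicity and subadditivity of the ``second-order part'' $I_1(n):=I(n)-c^{*}_r\,10^{(n-a_r)/5}$ (the $6^{\cdot/5}$-plus-linear piece), mirroring Eq.~\eqref{eq} and Lemma~\ref{bolemma0011}.

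The genuinely new part is Property~C. For Theorem~\ref{main1} the extremal graph $K_1\ast\tfrac{n-1}{5}K_5$ has only pendant $K_5$'s, so a minimal counterexample can be shown to have \emph{no} pendant $K_3,K_4,K_5,K_6$ (Corollary~\ref{forbidden}); but for $n\equiv 0,2,3,4\pmod 5$ the conjectured extremal graphs genuinely contain one or two pendant $K_4$'s or $K_6$'s, e.g.\ $K_1\ast(\tfrac{n-9}{5}K_5\cup 2K_4)$ and $K_1\ast(\tfrac{n-13}{5}K_5\cup 2K_6)$. So the correct intermediate statements are weaker: a minimal counterexample has no pendant $K_3$, and its pendant blocks are confined to $K_4,K_5,K_6$ with the multiplicities dictated by the residue. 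Lemmas~\ref{gcut5} and~\ref{gcut6} (a cutpoint cutting off $\le 6$ vertices leaves a complete piece) still go through and feed directly into the non-cutpoint analysis, and the analogue of the Property~D analysis (Lemmas~\ref{z1},~\ref{zy2},~\ref{zy3}, and the sharpened Lemma~\ref{d43tool} controlling which of $K_4,K_5,K_5-e,K_6,K_6-e$ can appear as a component of $G-N[u]\cup N[p]$) must be redone with the new constants, since the extremal graphs now realize some of those components.

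The main obstacle is the bookkeeping, and in particular the razor-thin slack near the thresholds where the coefficient of $6^{(n-\cdot)/5}$ in $I(n)$ changes: every inequality ``a combination of $g$'s and smaller $I$'s is $<I(n)$'' must be checked branch by branch, and several barely hold. I expect the hardest sub-case to be the one in which a hypothetical minimal counterexample already looks like the extremal graph --- a near-universal central vertex with many pendant $K_5$'s and one or two pendant $K_4$'s or $K_6$'s --- and one must show that any deviation (a $K_4$ attached through a bridge rather than directly, a pendant $K_6$ that should be a $K_5$, a spurious or missing edge inside a component) strictly drops $|M_G|$, or leaves it unchanged while strictly dropping $\tau(G)$. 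Making the hypotheses of the twin-transfer lemmas~\ref{y1} and~\ref{y-y1} (``no maximal induced matching lies entirely in $W_1\cup W_4$ meeting $W_1$, nor entirely in $W_3\cup W_4$ meeting $W_3$'') verifiable in all these near-extremal configurations, so that one can legitimately push $G$ toward $F_n$, is where the argument is most likely to stall and need an idea beyond what Theorem~\ref{main1} supplies.
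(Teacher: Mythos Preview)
The statement you are attempting to prove is a \emph{Conjecture} in the paper's concluding remarks; the paper provides no proof of it. There is therefore no ``paper's own proof'' to compare your proposal against. The authors prove only the weaker Theorem~\ref{main1}, which gives the bound $f(n)$ (equal to $I(n)$ only for $n\le 13$ and $n\equiv 1\pmod 5$), and explicitly leave the exact value $I(n)$ for all residues as an open problem.

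Your proposal is a reasonable research outline rather than a proof, and you yourself flag the essential difficulty: in the paper's argument for Theorem~\ref{main1}, the key structural step is Corollary~\ref{forbidden}, which rules out \emph{all} pendant blocks $K_3,K_4,K_5,K_6$ in a minimal counterexample. That step works precisely because $f(n)$ is a uniform upper bound loose enough that none of the conjectured extremal graphs for $n\not\equiv 1\pmod 5$ violate it. Against the tight bound $I(n)$, the extremal graphs genuinely contain pendant $K_4$'s and $K_6$'s, so the analogue of Corollary~\ref{forbidden} is simply false, and with it the clean route to ``no non-cutpoints'' collapses. Your suggestion to replace it by a residue-dependent multiplicity bound on pendant $K_4$'s and $K_6$'s is the natural idea, but the paper's inequalities (Lemmas~\ref{bolemma0011}, \ref{bo1}, \ref{ab}, \ref{bopen5}) are calibrated to produce strict inequality against $f(n)$ and would have to become equalities in the extremal configurations; turning them into a characterization (with the ``if and only if'' for $F_n$) is a genuinely new problem that the paper's methods do not resolve. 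Your closing paragraph correctly identifies this as the point where the argument would need an additional idea.
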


\end{document}